\renewcommand{\SS}{{\mathcal S}} 
\title{Exploring the Landscape of Relational Syllogistic Logics}
\author{Alex Kruckman and Lawrence S.\ Moss
}
\begin{document}

\maketitle

\begin{abstract}
This paper explores relational syllogistic logics, a family of logical systems related to reasoning about relations in 
extensions of the classical syllogistic.
These are all decidable logical systems.
We prove completeness theorems and complexity results for a natural subfamily of relational syllogistic logics, parametrized by constructors for terms and for sentences.
\end{abstract}

\tableofcontents

\section{Introduction}

This paper explores several fragments of \emph{relational syllogistic logic} and aims to 
provide completeness and complexity results for them.   These are among the simplest of all logical systems.
To set notation and terminology by example, let us consider the absolutely simplest logical system $\langall$, the one for sentences
``all $p$ are $q$'' introduced in~\cite{logic:moss08}. The syntax begins with a set $\bP$, called the \emph{nouns}. 
Then the sentences of $\langall$ are simply the expressions 
$(\all{p}{q})$, where $p$ and $q$ are nouns.   The semantics is equally straightforward.
A \emph{model} $\Model$ is a set $M$ together with an interpretation function giving a subset $\semantics{p}\subseteq M$ for each noun $p$.
We then define:
\begin{equation}
\label{semantics-all}
\Model\models \all{p}{q} \quadiff 
\semantics{p} \subseteq \semantics{q}.\\
\end{equation}
We employ standard model-theoretic notation and terminology. We say that $\Model$ \emph{satisfies} a sentence $\phi$, or that $\Model$ is a \emph{model of $\phi$}, when $\Model\models \phi$. A \emph{theory} is a set of sentences. 
Given a theory $\Gamma$, we write $\Model\models \Gamma$ to mean that $\Model$ satisfies every sentence in $\Gamma$;
naturally we say that \emph{$\Model$ satisfies $\Gamma$}, or that \emph{$\Model$ is a model of $\Gamma$}.
We then have the usual notion of logical consequence: given a theory $\Gamma$ and a sentence $\phi$,
we write $\Gamma\models\phi$ if every model of $\Gamma$ satisfies $\phi$.

We  match the semantics with a proof system.  Our system has two rules of inference, shown below:
\[
\infer[\axiom]{\all{x}{x}}{}
\qquad
\infer[\barbara]{\all{x}{z}}{\all{x}{y} & \all{y}{z}}
\]

In these rules, the material above the line is the set of \emph{premises}, and the sentence below
is the \emph{conclusion}. So $(\axiom)$ has no premises, and $(\barbara)$ has two. A \emph{substitution instance} of a rule is obtained by substituting nouns in $\bP$ for the variables $x$, $y$, and $z$.
We can then define the provability relation $\Gamma\proves \phi$: this means that there is a  tree whose root is $\phi$, and every  node in the tree is either a leaf  and belongs to $\Gamma$, or else it is the conclusion, and its children are the premises, of a substitution instance of one of our rules. The soundness/completeness theorem for this system states that $\Gamma\models\phi$ iff $\Gamma\proves \phi$.
For the proof, see~\cite{logic:moss08}.

\subsection{Syntax and semantics of the logics in this paper}  We are concerned not with $\langall$ but rather with a family of extensions of it.
We start with a set $\bP$ of nouns (just as above), and
also a set $\bR$, the \emph{verbs}.
Then we define \emph{terms} and \emph{sentences}
via the syntax below, where $p$ is any noun in $\bP$, $r$ is any verb in $\bR$, and $x$ and $y$ are any terms:
\begin{equation}\label{master-grammar}
\begin{split}
\mbox{terms} &\qquad p \mid \allterm{r}{x} \mid \someterm{r}{x} \mid \notterm{x} \\ 
\mbox{sentences} &\qquad  \all{x}{y} \mid \some{x}{y} 
\end{split}
\end{equation}

Note that we have recursion here, so terms can be nested, e.g.\ $(\allterm{r}{(\notterm{\someterm{s}{p}})})$.

For the semantics, we start with a model in our previous sense (to interpret the nouns),
and add interpretations of the verbs as binary relations: 
$\semantics{r} \subseteq M\times M$ for all $r\in \bR$.
Then we interpret our terms in a given model by recursion as follows:
\begin{align*}
\semantics{\allterm{r}{x}} &=  \set{m\in M :  \text{for all $n\in \semantics{x}$, $m \semantics{r} n$} } \\
\semantics{\someterm{r}{x}} &  = \set{m\in M :  \text{there is $n\in \semantics{x}$ such that $m \semantics{r} n$} } \\
\semantics{\notterm{x}} & = M\setminus\semantics{x}
\end{align*}
Thus, the interpretation of every term is a subset of $M$.
We define the truth-relation for sentences and models by generalizing (\ref{semantics-all}) above:
\begin{align*}
\Model\models \all{x}{y} & \quadiff  
\semantics{x} \subseteq \semantics{y} \\
\Model\models \some{x}{y} & \quadiff  
\semantics{x} \cap \semantics{y}\neq\emptyset 
\end{align*}

The basic languages in this paper are all sub-languages of the language just presented, which we call $\langfivesecond$.  And they are all extensions of the language we call $\langone$, which only has the sentence former (\all{x}{y}) and the term former (\allterm{r}{x}). There are three features in $\langfivesecond$ which are absent from $\langone$: the sentence former $(\some{x}{y})$, the term former (\someterm{r}{x}), and term complementation. We thus explore $2^3 = 8$ logical systems, obtained by all possible combinations of these features.  Those languages  are listed in 
the chart in Figure~\ref{language-chart}.
 We organize matters by studying 
$\lang_n$ and $\lang_{n.5}$  in Section $n$. Note that $\langone$ and $\langtwo$ are related in the same was as $\lang_n$ and $\lang_{n.5}$ for $n>2$, namely by adding the sentence former $(\some{x}{y})$.

We are interested in complete proof systems and the complexity of the consequence relation for each of these languages. By this we mean the computational complexity of the following decision problem: given a finite theory $\Gamma$ and a sentence $\phi$, output ``yes'' if $\Gamma\models \phi$ and ``no'' otherwise. Our results, which are summarized in the last two columns of Figure~\ref{language-chart}, will be explained in more detail in Section~\ref{section-results} below.

\begin{figure}[t]
\begin{mathframe}
\[
\begin{array}{|c||c|c||c|c|c|}
\hline
\mbox{language} & \mbox{term} & \mbox{sentence} &   \mbox{additions to} & \mbox{complexity of the} \\
 & \mbox{former(s)} & \mbox{former(s)}  & \mbox{syllogistic logic}&\mbox{consequence relation} \\
\hline
\hline
\langone & \allterm{r}{x} & \all{x}{y} &    \mbox{none} &  \mbox{$\PTIME$} \\
\hline
\langtwo & \allterm{r}{x} & \all{x}{y}, \some{x}{y} & \mbox{(\casesrule), or (\chains),}  &  \mbox{$\PTIME$} \\
  & & &   \mbox{or extra syntax} & \\
  \hline
\langthree & \allterm{r}{x}, \someterm{r}{x} & \all{x}{y}  &   \mbox{(\casesrulethree) \&\ (\casesruletwo)} 
   & \mbox{$\CONP$ complete}  \\
\hline
\langthreesecond & \allterm{r}{x}, \someterm{r}{x} & \all{x}{y},  \some{x}{y} &    \mbox{ (\casesrule) \&\ (\casesruleone)}
      & \mbox{$\CONP$ complete}  \\
\hline
\langfour & \allterm{r}{x}, \notterm{x} & \all{x}{y} & \mbox{extra syntax } \& & \mbox{$\CONP$ hard}   \\
& & & \mbox{schematic rules} & \\
\hline
\langfoursecond & \allterm{r}{x}, \notterm{x} & \all{x}{y}, \some{x}{y} & \mbox{extra syntax } \&\ (\raa) & \mbox{$\EXPTIME$} \\
& & & \&\ \mbox{schematic rules} & \\
\hline
\langfive & \allterm{r}{x}, \someterm{r}{x},\notterm{x} & \all{x}{y} & \mbox{} & \mbox{$\EXPTIME$ complete}   \\
\hline
\langfivesecond & \allterm{r}{x}, \someterm{r}{x},\notterm{x} & \all{x}{y}, \some{x}{y} & \mbox{individual variables} & \mbox{$\EXPTIME$ complete} \\
\hline
\end{array}
\]
\caption{Languages in this paper, given by section.\label{language-chart}}
\end{mathframe}
\end{figure}

\subsection{Related work} 

This paper is similar in spirit to~\cite{phmoss}, which also considered completeness and complexity results for decidable languages extending the basic syllogistic logic $\langall$. In fact, the largest language of this paper, $\langfivesecond$, is essentially equivalent to the largest language considered in~\cite{phmoss}, called $\mathcal{R}^{*\dag}$ there (see Propositions~\ref{fivesecond} and~\ref{fivesecondsecond} below). But there are several differences between the other languages in~\cite{phmoss} and in this paper.   First,~\cite{phmoss} 
allowed for complemented \emph{verbs}, contrary to what we do here.   Second,~\cite{phmoss} also explored
logical systems where the two terms in sentences like $(\all{x}{y})$ were not treated the same.  
For example, in two systems there, the subject noun phrase $x$ was required to be (negated) atomic.  Third, we allow nested terms, which are not part of the syntax in~\cite{phmoss}. The upshot is that
there is no overlap in the technical results from~\cite{phmoss} and this paper, except for the result on $\langfivesecond$ which we quote
in Section~\ref{section-five}.

In~\cite{logic:mcA+G92}, McAllester and Givan considered a language which is a slight extension of our language $\langthreesecond$. In  Section~\ref{section-three}, we essentially provide a new proof of their result that the consequence relation for this language is $\CONP$ complete. Our version of this result is slightly sharper, since we prove that the weaker language $\langthree$ is already $\CONP$ hard, and we also provide complete (though non-syllogistic) proof systems for $\langthree$ and $\langthreesecond$. 

We began this paper with $\langall$, which was introduced in~\cite{logic:moss08}. But the smallest language in Figure~\ref{language-chart} is $\langone$, which is the extension of $\langall$ by the term former $(\allterm{r}{x})$.
As a proof system for $\langone$, we take the rules $(\axiom)$ and $(\barbara)$ above (but we allow terms, not just nouns, to be substituted for the variables $x$, $y$, and $z$), together with a new rule: 
\[
\infer[\anti]{\all{(\allterm{r}{y})}{(\allterm{r}{x})}}{\all{x}{y}}
\]

It is easy to see that this proof system is sound (if $\Gamma\proves\phi$, then $\Gamma\models \phi$). Completeness was shown in~\cite{MossKruckman16}, which also contains completeness and complexity results for a number of related languages. The completeness proof originated in~\cite{Moss:LFL}, where it is also shown that the consequence relation for $\langone$ is in $\PTIME$. We reprove these results in Section~\ref{section-strongly} below, but using a more general framework described in Section~\ref{section-introduction-rules}. This framework unifies the $\PTIME$ results for $\langone$ and $\langtwo$ in Section~\ref{section-fourplace}, and allows us to obtain more precise negative results for the other languages in the paper, as we shall see.

 \subsection{Syllogistic proof systems and bounded completeness}
 \label{section-introduction-rules}
 
At this point, we wish to formally state what we mean by 
a syllogistic proof system. To state rules, we employ a language with \emph{noun variables} $p,q,\dots$, \emph{verb variables} $r, s,\dots$, \emph{term variables} $x,y,\dots$, and \emph{sentence variables} $\phi,\psi,\dots$. (In practice, none of our rules will make use of noun variables, and very few will make use of sentence variables.)

A \emph{term template} is defined as in (\ref{master-grammar}), but using noun and verb variables in place of nouns and verbs, and with an additional base case: a term variable is a term template. A \emph{sentence template} is defined as in (\ref{master-grammar}), but using term templates in place of terms, and with an addition option: a sentence variable is a sentence template. 

A \emph{syllogistic rule} $\rho$ consists of finitely many (possibly none) sentence templates as premises, and a single sentence template as a conclusion.
 A \emph{syllogistic proof system} is a finite set of syllogistic rules.  

We use $\proves$ to denote a syllogistic proof system.
Given a syllogistic proof system, we also use the symbol $\proves$ for the (standard)
provability relation, defined shortly. A \emph{substitution instance} of a rule $\rho$ is obtained by substituting nouns, verbs, terms, and sentences for all noun variables, verb variables, term variables, and sentence variables, respectively. A \emph{proof tree over a theory $\Gamma$} is a tree labeled by sentences, such that each node is either a leaf and belongs to $\Gamma$, or else it is the conclusion, and its children are the premises, of a substitution instance of one of the rules. We write $\Gamma\proves \phi$ if there is a proof tree over $\Gamma$ whose root is $\phi$.

We should mention that a syllogistic proof system is subject to some important limitations.
First, the system cannot include rules which allow for the withdrawal of assumptions, as in  \emph{reductio ad absurdum}, or proof by cases. (On the other hand, 
\emph{ex falso quodlibet} is a syllogistic rule; see~\cite{logic:moss08}.)  Second, the premises of each rule must be a fixed finite set of sentence templates;
the set of premises cannot be listed as a  \emph{schema}. 

Later in this paper, we shall see proof systems that are not syllogistic in our sense: To obtain completeness theorems for various logics, we need to add the rule (\casesrule) and its variants (\casesruleone), (\casesrulethree), and (\casesruletwo) in Section~\ref{sec-cases} and Section~\ref{section-three}, the rule (\raa) in Section~\ref{section-term-negation}, and the schema (\chains) in Section~\ref{section-chains}.

Next, we introduce a strengthening of the notion of syllogistic proof system, which ensures that the consequence relation $\Gamma\models \varphi$ is efficiently decidable, for any finite theory $\Gamma$ and any sentence $\varphi$.

\begin{definition}\label{def-confined}
Let $\lang$ be a language, equipped with a syllogistic proof system $\proves$, and let  $A$ be any set of sentences in $\lang$.
We write $\Gamma\provesA \phi$ if $\Gamma\proves  \phi$ via a proof tree $\TT$ with the property that all sentences in $\TT$ belong to 
 $A$.  
\end{definition}

\begin{example}\label{ex-pertinent}
This example is based on the observation that
\begin{equation}
\label{pertinent-prelim}
 \set{\all{x}{y} , \all{y}{z}} \proves \all{(\allterm{r}{z})}{(\allterm{r}{x}}).
\end{equation}
For example, here is a proof tree:
\[\infer[\barbara]{\all{(\allterm{r}{z})}{(\allterm{r}{x}})}
{
\infer[\anti]{\all{(\allterm{r}{z})}{(\allterm{r}{y})}}{\all{y}{z}}
&
\infer[\anti]{\all{(\allterm{r}{y})}{(\allterm{r}{x})}}{\all{x}{y}}
}
\]
Let $A$ be the set of all sentences $\psi$ such that every subterm of $\psi$ is in the set $\set{x,y,z,\allterm{r}{x},\allterm{r}{z}}$ (this is the set of subterms of the sentences appearing in (\ref{pertinent-prelim})).

Our tree above does not show that 
\begin{equation}
\label{moref}
 \set{\all{x}{y} , \all{y}{z}} \provesA \all{(\allterm{r}{z})}{(\allterm{r}{x}}).
 \end{equation}
The problem is that the term $(\allterm{r}{y})$ is not in $A$.
But the tree below does show (\ref{moref}):
\[
\infer[\anti]{\all{(\allterm{r}{z})}{(\allterm{r}{x})}}{
\infer[\barbara]
{\all{x}{z}}
{
\all{x}{y} & \all{y}{z}
}
}
\]
\end{example}

\begin{definition} A \emph{boundedly complete syllogistic proof system} for $\lang$ is a syllogistic proof system 
$\proves$ for $\lang$ which is sound, and such that that there exists a $\PTIME$-computable $f:\powfin(\lang)\to \powfin(\lang)$ such that whenever $\Gamma\models \phi$, then   $\Gamma\provesfGammaphi \phi$.\label{def-strongly}
\end{definition}

\begin{example}\label{ex-pertinent-two}
Suppose we are working with the language $\langone$. For any finite set of sentences $\Delta$, let $f(\Delta)$ be the set of all sentences $\varphi$ such that every subterm of $\varphi$ is a subterm of a sentence in $\Delta$. Note that $f$ is computable in polynomial time by enumerating the subterms of sentences in $\Delta$, and then forming all sentences $(\all{u}{v})$ such that $u$ and $v$ are on the list. 
In connection with Example~\ref{ex-pertinent}, the set $A$ there is exactly
$f(\set{ \all{x}{y} , \all{y}{z}, \all{(\allterm{r}{z})}{(\allterm{r}{x}})})$. In Proposition~\ref{prop-g} below, we are going to show that the proof system with rules ($\axiom$), ($\barbara$), and ($\anti$) is boundedly complete for $\langone$. The function $f$ could serve as a witness. But in order to simplify the proof, we use a slightly different function $g$.
\end{example}
  
\begin{theorem}
Fix a language $\lang$. Let $\proves$ be a boundedly complete syllogistic proof system for a language $\lang$.
Then $\proves$ is complete, and for any finite theory $\Gamma$ and any sentence $\varphi$, the
problem of deciding whether $\Gamma\models\phi$ is in $\PTIME$.
\label{theorem-ptime}  
 \end{theorem}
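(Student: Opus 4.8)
The plan is to reduce the semantic consequence relation to a purely syntactic saturation computation, exploiting bounded completeness to confine attention to a polynomial-size pool of sentences. The key preliminary observation is an exact characterization of consequence in terms of \emph{confined} provability. Set $A = f(\Gamma \cup \set{\phi})$. I claim $\Gamma \models \phi$ if and only if $\Gamma \provesA \phi$. The forward direction is precisely the bounded completeness hypothesis. For the converse, a confined proof tree is in particular a proof tree, so $\Gamma \provesA \phi$ yields $\Gamma \proves \phi$, and soundness (part of the definition of boundedly complete) gives $\Gamma \models \phi$. This equivalence already delivers completeness: if $\Gamma \models \phi$ then $\Gamma \provesA \phi$, hence $\Gamma \proves \phi$.

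For the complexity claim I would give the following decision procedure. First compute $A = f(\Gamma \cup \set{\phi})$; by hypothesis $f$ is $\PTIME$-computable, so $A$ consists of polynomially many sentences, each of polynomial size. Then compute, by forward chaining, the least set $S \subseteq A$ that contains $\Gamma \cap A$ and is closed under the rules relativized to $A$: whenever some substitution instance of a rule has all its premises in $S$ and its conclusion in $A$, adjoin that conclusion to $S$. Writing $S^*$ for the resulting saturated set, an induction on proof-tree height (using that leaves of a confined proof over $\Gamma$ lie in $\Gamma \cap A$) shows $S^* = \set{\psi \in A : \Gamma \provesA \psi}$. Combined with the equivalence above, $\Gamma \models \phi$ holds exactly when $\phi \in S^*$, which is settled by a single membership test.

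The main obstacle is the polynomial bound on the saturation step, and this is where the definition of a syllogistic proof system does the work. The proof system has only finitely many rules, and each has a bounded number $k$ of premises. Hence, to locate every applicable instance, it suffices for each rule to range over all $(k+1)$-tuples of sentences drawn from $A$, regarding the tuple as candidate premises together with a candidate conclusion, and to test whether it is a substitution instance of that rule. There are at most $|A|^{k+1}$ such tuples, which is polynomial in the input since $k$ is a fixed constant, and each test matches a tuple of concrete sentences against a fixed-size template, computing a consistent substitution (if one exists) and checking agreement across shared variables in polynomial time. Drawing the conclusion from $A$ as well, rather than computing it from the premises, is what keeps this enumeration finite and polynomial even for rules whose conclusion contains variables absent from the premises, such as \emph{ex falso quodlibet}.

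Finally, since $S$ grows monotonically within the finite set $A$, the saturation stabilizes after at most $|A|$ rounds, and each round performs the polynomially many polynomial-time matching tests described above; the membership test $\phi \in S^*$ is then immediate. Thus the entire procedure runs in polynomial time, establishing that deciding $\Gamma \models \phi$ is in $\PTIME$. I expect the only genuinely delicate point in the full write-up to be a careful accounting of the matching tests and the verification of the identity $S^* = \set{\psi \in A : \Gamma \provesA \psi}$; everything else is bookkeeping.
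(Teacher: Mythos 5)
Your proposal is correct and follows essentially the same route as the paper's proof, which likewise reduces $\Gamma\models\phi$ to confined provability $\Gamma\provesA\phi$ with $A = f(\Gamma\cup\set{\phi})$ and computes the saturation by forward chaining inside $A$ until a fixed point is reached in at most $|A|$ rounds. The one point where you are more careful than the paper---enumerating candidate $(k+1)$-tuples of premises together with a conclusion drawn from $A$, rather than all substitution instances whose premises lie in the current set, which matters for rules whose conclusion contains variables absent from the premises---is a worthwhile precision; the paper's proof is only a sketch that takes the $\PTIME$ bound on this step as standard.
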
 
  
\begin{proof} 
 This is essentially Appendix A of McAllester~\cite{McAllester93}.
Here is a sketch, based on this, and also on a parallel result in~\cite{phmoss} which was proved in the easier setting of syllogistic logics without complex terms.

Let $f:\powfin(\lang)\to \powfin(\lang)$ be a $\PTIME$-computable function such that $\Gamma\provesfGammaphi \phi$ whenever $\Gamma\models \phi$. 
In particular,  $\Gamma\models \phi$ implies that $\Gamma\proves \phi$.  So the proof system is complete. 

 For the $\PTIME$ decidability,  first compute $f(\Gamma\cup\set{\phi})$.   Call this set $A$. 
Let $X_0 = \Gamma\cap A$. We compute an increasing sequence of subsets of $A$ by induction. Given $X_n$, take each of the finitely many rules $\rho$ of the logic, and do the following:
 compute the set of all substitution instances of $\rho$ whose premises are all in $X_n$;
for each such substitution instance, if the conclusion $\psi$ belongs to $A$, then add $\psi$ to $X_{n+1}$.
 Continue until 
 the first $n^*$ such that $X_{n^*+1} =X_{n^*}$. Since all the $X_n$ are subsets of $A$, we have $n^* \leq 1 + |A|$. And $\Gamma\provesA \phi$ iff $\phi\in X_{n^*}$.
We take it as standard that all of this can be done in $\PTIME$.
 \end{proof}

\subsection{Example: Completeness and $\PTIME$ decidability for $\langone$}
\label{section-strongly}

We illustrate the application of
 Theorem~\ref{theorem-ptime} to $\langone$ with the syllogistic proof system consisting of the rules (\axiom), (\barbara), and (\anti). 
 
For any set of sentences $\Delta$,
let $T(\Delta)$ be the set of subterms of sentences in $\Delta$.
Let $T^+(\Delta)$ be $T(\Delta)$ together with the terms $(\allterm{r}{w})$
where $w\in T(\Delta)$ and where $r$ is a verb which appears in $\Delta$.

Let $g(\Delta)$ be the set of all sentences $(\all{u}{v})$,
where $u\in T(\Delta)$ and $v\in T^+(\Delta)$.
Note that when $\Delta$ is finite, $g$ is computable in $\PTIME$.

\begin{proposition}[\cite{Moss:LFL}]\label{prop-g}
If  $\Gamma\models \phi$, then $\Gamma\proves_{g(\Gamma\cup \set{\varphi})} \phi$.
Hence the consequence relation for $\langone$ is in $\PTIME$.
\end{proposition}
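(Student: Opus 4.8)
The implication displayed in the statement is exactly the claim that the proof system with rules $(\axiom)$, $(\barbara)$, $(\anti)$ is boundedly complete for $\langone$, with $g$ serving as the witnessing function. Since $g$ is $\PTIME$-computable and the system is sound, the ``hence'' clause is then immediate from Theorem~\ref{theorem-ptime}. So the whole task is to prove the implication, and the plan is to argue by contraposition. Writing $A = g(\Gamma\cup\set{\phi})$, I would assume that $\Gamma \proves_A \phi$ fails and construct a single model $\Model$ satisfying $\Gamma$ but not $\phi$, which shows $\Gamma \not\models \phi$.

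First I would build a canonical model $\Model$. Its universe is $U = T(\Gamma\cup\set{\phi})$, the set of subterms occurring in $\Gamma$ and $\phi$, and throughout I abbreviate $u \le v$ for $\Gamma \proves_A \all{u}{v}$. I interpret each noun $p$ by $\semantics{p} = \set{u \in U : u \le p}$ and each verb $r$ occurring in $\Gamma\cup\set{\phi}$ as the relation $\semantics{r} = \set{(u,v) \in U \times U : \Gamma\proves_A \all{u}{(\allterm{r}{v})}}$, with verbs that do not occur interpreted arbitrarily. Note that the sentences $\all{u}{(\allterm{r}{v})}$ invoked here do belong to $A$: here $u$ lies in $T(\Gamma\cup\set{\phi})$ and $(\allterm{r}{v})$ lies in $T^+(\Gamma\cup\set{\phi})$, which is exactly what the definitions of $g$ and of $T^+$ were arranged to guarantee.

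The heart of the proof is the truth lemma: for every term $x \in U$ one has $\semantics{x} = \set{u \in U : u \le x}$. I would prove this by induction on the structure of $x$. The base case $x = p$ is the definition of $\semantics{p}$. For the inductive step $x = (\allterm{r}{y})$, unwinding the semantic clause for $(\allterm{r}{y})$, the inductive hypothesis for $y$, and the definition of $\semantics{r}$ reduces the goal to the biconditional: $u \le (\allterm{r}{y})$ holds iff $u \le (\allterm{r}{n})$ holds for every $n \in U$ with $n \le y$. The forward direction is where $(\anti)$ carries the load: given $n \le y$, rule $(\anti)$ yields $(\allterm{r}{y}) \le (\allterm{r}{n})$, and $(\barbara)$ chains this against $u \le (\allterm{r}{y})$ to produce $u \le (\allterm{r}{n})$. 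The converse is immediate on instantiating $n = y$, using $(\axiom)$ to supply $y \le y$.

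Granting the truth lemma, both halves of the countermodel fall out. For $\Model \models \Gamma$: each $\all{c}{d} \in \Gamma$ belongs to $A$ and so is derivable by the one-node tree consisting of a leaf in $\Gamma$, giving $c \le d$; combined with transitivity of $\le$ (an instance of $(\barbara)$ inside $A$) and the truth lemma, this yields $\semantics{c}\subseteq\semantics{d}$. For $\Model\not\models\phi$: writing $\phi = \all{a}{b}$, we have $a \le a$ by $(\axiom)$ but $a \not\le b$ by the contrapositive assumption, so $a \in \semantics{a}\setminus\semantics{b}$. I expect the main obstacle to be bookkeeping rather than insight: at every use of $(\axiom)$, $(\barbara)$, and especially $(\anti)$ one must confirm that the sentence manufactured still lies in $A$, so that what is produced is a genuine $\proves_A$-derivation and not merely a $\proves$-derivation. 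This is precisely the subtlety highlighted in Example~\ref{ex-pertinent}, and it is the reason $g$ is built from $T^+$: the intermediate sentences $\all{(\allterm{r}{y})}{(\allterm{r}{n})}$ and $\all{u}{(\allterm{r}{n})}$ that appear in the forward direction of the truth lemma have right-hand terms $(\allterm{r}{n})$ with $n \in U$, and $T^+$ was defined so as to contain exactly these. Verifying these membership conditions is the one place where care is genuinely required.
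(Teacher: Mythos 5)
Your proposal is correct and follows essentially the same route as the paper: the same canonical model with universe $T(\Gamma\cup\set{\phi})$, the same interpretations of nouns and verbs via $\proves_A$-provability, the same Truth Lemma proved by induction with the same careful checks that the sentences produced by $(\anti)$ and $(\barbara)$ lie in $A$ because their right-hand terms lie in $T^+$. The only difference is that you phrase the final step contrapositively (no $\proves_A$-proof yields a countermodel) while the paper runs it directly ($\Model\models\Gamma$ and $\Gamma\models\phi$ force $a\in\semantics{b}$, whence provability by the Truth Lemma), which is an immaterial restatement of the same argument.
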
 
 
\begin{proof}
Fix a finite theory $\Gamma$ and a sentence $\varphi$. We are going to save on some notation in this proof by writing $T$ for $T(\Gamma\cup\set{\phi})$,
$T^+$ for $T^+(\Gamma\cup\set{\phi})$,
and $A$ for $g(\Gamma\cup\set{\phi})$.

We make a model $\Model$ as follows.
The domain $M$ of the model is $T$.
The structure of the model is given by
\begin{align*}
t\in \semantics{p} &\quadiff  \Gamma\provesA\all{t}{p}\\
t\semantics{r} u &\quadiff \Gamma\provesA\all{t}{(\allterm{r}{u})}\\
\end{align*}
(Recall that the set $A$ is fixed as $g(\Gamma\cup \set{\varphi})$. When we write $\Gamma\provesA\psi$ in this proof, we are not changing the meaning of $A$ to $g(\Gamma\cup \set{\psi})$.)
 
\begin{claim}[Truth Lemma]
For all $a\in T$, 
\begin{equation}\label{indg}
t\in \semantics{a} \quadiff \Gamma\provesA\all{t}{a}. 
\end{equation} 
\end{claim}
\begin{proof}
The proof is by induction on $a$.   When $a$ is a noun, the assertion in (\ref{indg})
 is part of the definition of the model.
Assume (\ref{indg}) for $a$, and consider $(\allterm{r}{a})$.  Since $(\allterm{r}{a})\in T$, then $a\in T = M$ as well.   Note also that the sentence $(\all{a}{a})$ belongs to $A$, and so $\Gamma\provesA\all{a}{a}$ by $(\axiom)$. By induction, $a\in \semantics{a}$.

Let $t\in \semantics{\allterm{r}{a}}$. Since  $a\in \semantics{a}$,
we have $t\semantics{r} a$, and hence $\Gamma\provesA\all{t}{(\allterm{r}{a})}$ by definition of $\semantics{r}$. 

Conversely, assume that  
$\Gamma\provesA\all{t}{(\allterm{r}{a})}$.  Then $t\in T = M$. We show that $t\in \semantics{\allterm{r}{a}}$.
For this, let $b\in\semantics{a}$, so by induction, $\Gamma\provesA\all{b}{a}$.
 We must show that $t\semantics{r} b$, equivalently $\Gamma\provesA\all{t}{(\allterm{r}{b})}$.
Note that $b\in M=T$, so $(\allterm{r}{b})\in T^+$.
The key point is that the sentence $\all{(\allterm{r}{a})}{(\allterm{r}{b})}$ belongs to $A$, since $(\allterm{r}{a})\in T$ and $(\allterm{r}{b})\in T^+$.  
Using (\anti) and $\Gamma\provesA\all{b}{a}$, we see that 
 $\Gamma\provesA\all{(\allterm{r}{a})}{(\allterm{r}{b})}$.   Then by (\barbara), 
 $\Gamma\provesA\all{t}{(\allterm{r}{b})}$.
\end{proof}

We continue by showing that $\Model\models\Gamma$.
For this, take any sentence $(\all{u}{v})$ in $\Gamma$. 
Let $t\in\semantics{u}$.  By (\ref{indg}), $\Gamma\provesA\all{t}{u}$. Now $t$, $u$, and $v$ are in $T$, so the sentences $(\all{u}{v})$ and $(\all{t}{v})$ are in $A$. By (\barbara),  $\Gamma\provesA\all{t}{v}$. 
By (\ref{indg}) again, $t\in\semantics{v}$.   Since $t$ was arbitrary, we have shown that $\semantics{u}\subseteq \semantics{v}$, and $\Model\models \all{u}{v}$. 

Since $\Model\models\Gamma$ and $\Gamma\models \phi$, the sentence $\phi$ holds in $\Model$.
Let us write $\phi$ as $(\all{a}{b})$.
Then $a\in T = M$ and the sentence $(\all{a}{a})$ belongs to $A$, and so $\Gamma\provesA\all{a}{a}$ by (\axiom).
Thus, $a\in\semantics{a}$ by (\ref{indg}).   So  $a\in\semantics{b}$.   By (\ref{indg}) again, $\Gamma\provesA\all{a}{b}$. 
This concludes the proof.
\end{proof}

\subsection{Results}
\label{section-results}

Our two main themes are trade-offs between expressive power
and computational complexity, and also the variety of devices that one can add on top of pure syllogistic logic in order to obtain 
sound and complete proof systems. Our results are summarized in Figure~\ref{language-chart}. 
    
We begin with a negative result:  
$\langtwo$ has  no sound and complete syllogistic proof system.
The argument for this is combinatorial, and in outline it is based on a similar result in~\cite{phmoss} for $\cR$.
Nevertheless, there are  proof systems which capture the consequence relation of $\langtwo$.
Most of Section~\ref{sec-langtwo} is devoted to several such logics, each of which extends syllogistic logic in a different way. 
One way is to add a rule (\casesrule) which enables proof by cases, 
 another uses a schema of rules called (\chains) (thus there are infinitely many rules, but
the set of rules is an easily-defined set), and the last is to extend the syntax in such a way that the extension, called $\langtwo^+$, does
have a sound and complete syllogistic proof system. In fact, this system is boundedly complete, and by Theorem~\ref{theorem-ptime}, the consequence relation of $\langtwo^+$, and hence of its sublanguage $\langtwo$, is in $\PTIME$.

Notice that what we will show is that $\langtwo$ has no sound and complete syllogistic proof system, but the larger language $\langtwo^+$ does have such a proof system.
The first example of this phenomenon is in~\cite{prattHartmann2014}.

The lower bounds on complexity established in the rest of the paper show that
(assuming {\sc P $\neq$ NP}) none of the other languages admit boundedly complete syllogistic proof systems. 

The languages $\langthree$ and $\langthreesecond$ extend $\langone$ and $\langtwo$ with the term former $(\someterm{r}{x})$. We show that the smaller language $\langthree$ has a consequence relation which is $\CONP$ hard, via a reduction from the one-in-three positive  $3$-SAT problem. On the other hand, a finite countermodel construction shows that the consequence relation of the larger language $\langthreesecond$ is in $\CONP$. It follows that the consequence relation for both languages are $\CONP$ complete. We give sound and complete proof systems for these logics which are not syllogistic: they use variants of the rule (\casesrule).

The languages $\langfour$ and $\langfoursecond$ extend $\langone$ and $\langtwo$ with term complementation.   As with $\langthree$, we show that the 
consequence relation for $\langfour$  is $\CONP$ hard, via a reduction from $3$-SAT. But this time we leave open the question of $\CONP$ completeness; the upper bound of $\EXPTIME$ comes from the known complexity of the larger language $\langfivesecond$. We also leave open the problem of formulating proof systems for $\langfour$ and $\langfoursecond$ in their original syntax. 
Instead, we present a completeness result for an extension $\langfoursecond^+$ of $\langfoursecond$. The proof system 
and the completeness result are comparatively simple, though the proof system is decidedly non-syllogistic: it includes a form of \emph{reductio ad absurdum} (\raa), as well as several rule schemas, and the syntax is not even finitary, since there is an infinite family of sentence formers. This proof system restricts, by dropping $(\raa)$, to a sound and complete proof system for the corresponding extension $\langfour^+$ of $\langfour$.

The largest languages in this paper are 
 $\langfive$ and $\langfivesecond$.  We have the least to say about them, mostly because 
 $\langfivesecond$ has been studied (in a different but equivalent formulation, called $\mathcal{R}^{*\dag}$) in~\cite{phmoss},
 and the complexity result for it from~\cite{phmoss} also holds for $\langfive$, as we shall see. The sound and complete proof system for $\mathcal{R}^{*\dag}$ from~\cite{phmoss} (which is non-syllogistic due to its use of individual variables) can be adapted to a similar proof system for $\langfivesecond$, but we do not make that explicit here. We leave open the problem of formulating a proof system for $\langfive$. 
 
One might guess that when we explore a partial order of logics, stronger logics are are harder to work with and to prove completeness for.  But this is not always the case.  
One reason:  as the logics get stronger, they include more and more features of first-order logic and are thus easier to analyze, due to our experience with first-order logic.  A second reason: sometimes adding to the syntax of a logic restores harmony in some way, thereby making it easier for us to work with.      The examples of $\langtwo^+$, $\langfour^+$, and $\langfoursecond^+$ emphasize the fact that there is not a monotone relationship between the strength of logical systems and their elegance, or between their strength and the difficulty of proving completeness. 
It is an open problem to develop a general theory which could explain this phenomenon. For example, why it is that some logical systems have (boundedly complete) syllogistic proof systems ($\langtwo^+$), while others do not ($\langtwo$)?   What we have at present are some ad hoc results, and much remains to be done.

\section{$\langtwo$: Adding the sentence former $(\some{x}{y})$ to $\langone$}\label{sec-langtwo}

The main results on $\langtwo$ are (1) it has no finite sound and complete syllogistic proof system; (2) nevertheless there are several non-syllogistic devices which allow us to obtain sound and complete proof systems;
(3) alternatively, we can extend the syntax of $\langtwo$ to a larger language $\langtwo^+$ in such a way that $\langtwo^+$ has
a boundedly complete syllogistic proof system;  and
(4) as a result of this last point, the consequence relation $\Gamma\models\phi$
for $\langtwo^+$ (and hence $\langtwo$)
 is
in $\PTIME$.

\subsection{The base system $\proves_0$}\label{sec-basesystem}

We begin with a proof system in Figure~\ref{baserules} that will be used in this section and beyond.

\begin{figure}[h]
\begin{mathframe}
\begin{gather*}
\infer[\axiom]{\all{x}{x}}{} \qquad
\infer[\barbara]{\all{x}{z}}{\all{x}{y}  & \all{y}{z}}\qquad
\infer[\anti]{\all{(\allterm{r}{y})}{(\allterm{r}{x})}}{\all{x}{y}}  
\\  \\
\infer[\someone]{\some{x}{x}}{\some{x}{y}}\qquad
\infer[\sometwo]{\some{y}{x}}{\some{x}{y}}\qquad
\infer[\darii]{\some{x}{z}}{\some{x}{y} & \all{y}{z}}
\end{gather*}
\caption{The proof rules of $\proves_0$ \label{baserules}}
\end{mathframe}
\end{figure}

It is easy to check that the rules in Figure~\ref{baserules} are sound. Theorem~\ref{theorem-allcomplete} below shows that they are also complete for conclusions of the form $(\all{x}{y})$. Since the rules $(\axiom)$, $(\barbara)$, and $ (\anti)$ are complete for $\langone$, this result can be interpreted as saying that $\langtwo$ is a conservative extension of $\langone$. It is also possible to give a direct model-theoretic proof of this conservativity result.

In many places in this paper, we will work with a set $\Terms$ of terms. We will always assume that
such a set $\Terms$ is closed under subterms.   We could usually take $\Terms$ to be the set of all terms in the language under study.
But when we use $\Terms$ to build a model (as we do just below), that model will be 
 infinite when $\Terms$ is infinite. Working with a finite set $\Terms$  allows us to build finite models in many situations.

We write $\Gamma\proves_0 \varphi$ if there is a proof of the sentence $\varphi$ from the theory $\Gamma$ using the rules in Figure~\ref{baserules}. 
We also write
\[
x \leq y
\]
to mean that $\Gamma\proves_0\all{x}{y}$.  We use this notation because $\leq$ is a preorder, due to 
(\axiom) and (\barbara).  Please note that $\Gamma$ is left off of this notation.

\paragraph{The first canonical model of a theory $\Gamma$}
Let $\Gamma$ be a theory, let $\Terms$ be a set of terms (closed under subterms as usual), and let $M$ be the set of unordered pairs $\set{t,u}$ of terms from $\Terms$. This includes singletons $\set{t} = \set{t,t}$. We define a model $\Model(\Gamma,\Terms)$ with domain $M$ by setting
\begin{equation}
\label{canonical-model}
\begin{split}
\set{t,u}\in \semantics{p} &\quadiff t \leq p \mbox{ or } u \leq p \\
\set{t,u}\semantics{r}\set{v,w}  & \quadiff \mbox{for some $a\in \set{t,u}$ and $b\in\set{v,w}$, $a\leq \allterm{r}{b}$}
\end{split}
\end{equation}

\begin{lemma}[Truth Lemma]\label{first-Truth-Lemma}
In $\Model(\Gamma,\Terms)$, for all terms $x\in \Terms$,
\[
\semantics{x} =  \set{\set{t,u}\in M :  t \leq x \mbox{ or } u \leq x }.
\]
\end{lemma}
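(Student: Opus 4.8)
The plan is to prove the Truth Lemma by induction on the structure of the term $x$, following the same template as the Truth Lemma in Proposition~\ref{prop-g}. The lemma characterizes the interpretation of each term $x \in \Terms$ in terms of the preorder $\leq$ (i.e.\ $\proves_0$-provability of $\all{\cdot}{\cdot}$). Writing $\sem{x} = \{\{t,u\} : t \leq x \text{ or } u \leq x\}$ is the goal; the base case $x = p$ a noun is immediate from the definition~(\ref{canonical-model}). Since $\Terms$ is closed under subterms, the inductive step may assume the characterization holds for every proper subterm.

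First I would handle the case $x = (\allterm{r}{a})$, assuming the lemma for $a$. I want to show $\{t,u\} \in \sem{\allterm{r}{a}}$ iff $t \leq \allterm{r}{a}$ or $u \leq \allterm{r}{a}$. By definition, $\{t,u\} \in \sem{\allterm{r}{a}}$ means that for every $\{v,w\} \in \sem{a}$ we have $\{t,u\}\,\sem{r}\,\{v,w\}$. For the interesting direction, suppose this holds; I would feed it a cleverly chosen witness. Because $a \leq a$ by $(\axiom)$, the singleton $\{a\}$ lies in $\sem{a}$ by the inductive hypothesis, so $\{t,u\}\,\sem{r}\,\{a\}$, which by~(\ref{canonical-model}) gives some $c \in \{t,u\}$ with $c \leq \allterm{r}{a}$, exactly what is wanted. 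Conversely, if say $t \leq \allterm{r}{a}$, I take any $\{v,w\} \in \sem{a}$; by the inductive hypothesis some $b \in \{v,w\}$ satisfies $b \leq a$, so by $(\anti)$ we get $\allterm{r}{a} \leq \allterm{r}{b}$, and then $(\barbara)$ gives $t \leq \allterm{r}{b}$, witnessing $\{t,u\}\,\sem{r}\,\{v,w\}$. Here I must be careful that the sentences $(\all{a}{a})$ and $(\all{(\allterm{r}{a})}{(\allterm{r}{b})})$ are the kind of sentences that $\proves_0$ can derive — which they are, as these are legitimate instances of the rules and the proof uses the unbounded $\proves_0$ rather than a confined $\provesA$, so no side-condition on a set $A$ is needed.

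The main subtlety, and the step I expect to require the most care, is the symmetric treatment of the unordered pairs. Because $\sem{r}$ is defined via an existential ``for some $a \in \{t,u\}$ and $b \in \{v,w\}$,'' the pair structure is genuinely symmetric, and I must check that the witness-selection arguments above never secretly assume which element of the pair is the relevant one. In particular, in the $(\allterm{r}{a})$ case I should verify the conclusion $c \leq \allterm{r}{a}$ correctly yields membership for the whole pair $\{t,u\}$ (that either $t$ or $u$ works is precisely the form of the target), and that the converse direction holds regardless of whether it is $t$ or $u$ that satisfies $t \leq \allterm{r}{a}$. This is routine but must be stated cleanly.

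Finally I would note that the grammar of $\langtwo$ provides only the two term formers $p$ and $(\allterm{r}{x})$ (the formers $(\someterm{r}{x})$ and $(\notterm{x})$ belong to the richer languages later in the chart), so the two cases above exhaust the induction, and with $\Terms$ closed under subterms the inductive hypothesis is always available. This completes the proof. The payoff is that, just as in Proposition~\ref{prop-g}, this Truth Lemma will let us read off that $\Model(\Gamma,\Terms)$ is a model of $\Gamma$ and conclude the completeness of $\proves_0$ for $\all{\cdot}{\cdot}$-conclusions.
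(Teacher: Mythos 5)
Your proof is correct and follows essentially the same route as the paper's: induction on the term, using the singleton $\set{a}\in\semantics{a}$ (via $(\axiom)$) as the witness in the forward direction of the $(\allterm{r}{a})$ case, and $(\anti)$ plus $(\barbara)$ in the converse. The extra remarks about the symmetry of unordered pairs and the absence of a confinement set $A$ are accurate but do not change the argument.
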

 \begin{proof}
 By induction on $x$.   For a noun $p\in \bP$, this is by definition of the model.
For a term of the form $(\allterm{r}{x})\in \Terms$, note that $x\in \Terms$, since $\Terms$ is closed under subterms. 

Suppose that 
$\set{t,u}\in \semantics{\allterm{r}{x}}$. Since $x\leq x$ by $(\axiom)$, the induction hypothesis implies $\set{x}\in\semantics{x}$.
So $\set{t,u}\semantics{r}\set{x}$.  
By the definition of $\semantics{r}$, either
$t\leq \allterm{r}{x}$, or else $u\leq \allterm{r}{x}$.

Conversely, fix $\set{t,u}\in M$, and suppose that  (without loss of generality)
$t\leq \allterm{r}{x}$.
 Let $\set{v,w}\in M$ be an element of $\semantics{x}$.  By the induction hypothesis, we have (without loss of generality) $v\leq x$. By (\anti), 
 $\allterm{r}{x} \leq \allterm{r}{v}$.  By $(\barbara)$, $t \leq \allterm{r}{v}$. 
So $\set{t,u}\semantics{r}\set{v,w}$, and hence $\set{t,u}\in \semantics{\allterm{r}{x}}$.
 \end{proof}
 
 \begin{lemma}
If $(\all{x}{y})\in \Gamma$, then $\Model(\Gamma,\Terms)\models \all{x}{y}$. If $x$ and $y$ are  any terms in $\Terms$, we have $\Model(\Gamma,\Terms)\models \some{x}{y}$. As a consequence, if $\Terms$ contains all subterms of sentences in $\Gamma$, then $\Model(\Gamma,\Terms)\models \Gamma$.
\label{first-canonical-model}
\end{lemma}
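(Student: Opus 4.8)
The plan is to read off all three assertions from the Truth Lemma (Lemma~\ref{first-Truth-Lemma}), which identifies $\semantics{x}$, for every $x\in\Terms$, with the set of pairs $\set{t,u}$ such that $t\leq x$ or $u\leq x$. Two standing facts will be used repeatedly: first, $\leq$ is a preorder, by $(\axiom)$ and $(\barbara)$; second, if $(\all{x}{y})\in\Gamma$ then $x\leq y$ for free, since the one-node tree whose root is the leaf $(\all{x}{y})\in\Gamma$ is already a proof, so $\Gamma\proves_0\all{x}{y}$.

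For the first assertion, suppose $(\all{x}{y})\in\Gamma$ with $x,y\in\Terms$, and take any $\set{t,u}\in\semantics{x}$. The Truth Lemma gives $t\leq x$ or $u\leq x$; say $t\leq x$ (the roles of $t$ and $u$ are symmetric). Combining $t\leq x$ with $x\leq y$ by transitivity, i.e.\ by $(\barbara)$, yields $t\leq y$, and the Truth Lemma then places $\set{t,u}\in\semantics{y}$. Hence $\semantics{x}\subseteq\semantics{y}$ and $\Model(\Gamma,\Terms)\models\all{x}{y}$. The second assertion is where the choice of domain earns its keep: given any $x,y\in\Terms$, look at the single pair $\set{x,y}\in M$. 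Since $x\leq x$ and $y\leq y$ by $(\axiom)$, the Truth Lemma puts $\set{x,y}$ into both $\semantics{x}$ and $\semantics{y}$ at once, so $\semantics{x}\cap\semantics{y}\neq\emptyset$ and $\Model(\Gamma,\Terms)\models\some{x}{y}$. The conceptual content worth flagging here is that every unordered pair $\set{x,y}$ simultaneously witnesses both of its coordinates, so this one model makes \emph{every} existential sentence over $\Terms$ true — which is exactly the reason the domain was taken to be pairs of terms rather than single terms.

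The third assertion is then pure bookkeeping. The only sentence formers of $\langtwo$ are $(\all{x}{y})$ and $(\some{x}{y})$, so every sentence of $\Gamma$ has one of these two shapes; when $\Terms$ contains all subterms of sentences in $\Gamma$, the terms $x,y$ occurring in any such sentence lie in $\Terms$. Thus the first assertion disposes of the universal sentences in $\Gamma$ and the second disposes of the existential ones, giving $\Model(\Gamma,\Terms)\models\Gamma$. I do not expect a genuine obstacle, since the Truth Lemma has already done the real work; the one point that must be handled with care is that every appeal to the Truth Lemma requires the terms in question to belong to $\Terms$, and it is precisely this requirement — invisible in the statement of the first assertion but explicit in the second — that the subterm-closure hypothesis in the third assertion is there to guarantee.
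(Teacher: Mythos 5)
Your proposal is correct and follows essentially the same route as the paper's proof: both parts are read off directly from Lemma~\ref{first-Truth-Lemma}, using $(\barbara)$ for the universal sentences and the pair $\set{x,y}$ with $(\axiom)$ as the witness for the existential ones, and the third assertion is the same bookkeeping. Your closing remark that applications of the Truth Lemma require the relevant terms to lie in $\Terms$ is a fair observation about a hypothesis the paper leaves implicit in the first assertion.
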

\begin{proof}
Suppose $(\all{x}{y})\in \Gamma$. If $\set{t,u}\in \semantics{x}$ in $\Model(\Gamma,\Terms)$, then by Lemma~\ref{first-Truth-Lemma} (without loss of generality), $t\leq x$. But then $t\leq y$ by $(\barbara)$, so $\set{t,u}\in \semantics{y}$ by Lemma~\ref{first-Truth-Lemma}, and $\Model(\Gamma,\Terms)\models \all{x}{y}$. 

Now suppose $x,y\in \Terms$. Since $x\leq x$ and $y\leq y$, we have $\set{x,y}\in \semantics{x}\cap \semantics{y}$ by Lemma~\ref{first-Truth-Lemma}, so $\Model(\Gamma,\Terms)\models \some{x}{y}$.

If $\Terms$ contains all subterms of sentences in $\Gamma$, then $\Model(\Gamma,\Terms)\models \all{x}{y}$ whenever $(\all{x}{y})\in \Gamma$ by the first assertion, and $\Model(\Gamma,\Terms)\models \some{x}{y}$ whenever $(\some{x}{y})\in \Gamma$ by the second assertion.
\end{proof}

\begin{theorem}\label{theorem-allcomplete}
If $\Gamma\models \all{x}{y}$, then $\Gamma\proves_0 \all{x}{y}$. Moreover, the proof only uses the rules $(\axiom)$, $(\barbara)$, and $(\anti)$. 
\end{theorem}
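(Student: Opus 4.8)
The plan is to prove the contrapositive by building a countermodel, reusing the canonical-model machinery just developed. Suppose $\Gamma \not\proves_0 \all{x}{y}$; I must produce a model of $\Gamma$ in which $\all{x}{y}$ fails. Take $\Terms$ to be the set of all subterms of sentences in $\Gamma \cup \set{\all{x}{y}}$; this set is closed under subterms and contains $x$, $y$, and every subterm of a sentence of $\Gamma$. Form the first canonical model $\Model = \Model(\Gamma,\Terms)$. Since $\Terms$ contains all subterms of sentences in $\Gamma$, Lemma~\ref{first-canonical-model} gives $\Model \models \Gamma$.

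It then remains to check that $\Model \not\models \all{x}{y}$, that is, that $\semantics{x} \not\subseteq \semantics{y}$. The natural separating point is the singleton $\set{x}$. By $(\axiom)$ we have $x \leq x$, so $\set{x} \in \semantics{x}$ by the Truth Lemma (Lemma~\ref{first-Truth-Lemma}). On the other hand, $\set{x} \in \semantics{y}$ would, again by Lemma~\ref{first-Truth-Lemma}, force $x \leq y$, that is $\Gamma \proves_0 \all{x}{y}$, contradicting our assumption. Hence $\set{x} \in \semantics{x} \setminus \semantics{y}$, so $\Model \not\models \all{x}{y}$, and therefore $\Gamma \not\models \all{x}{y}$. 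This establishes the contrapositive, hence the main claim.

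For the \emph{moreover} clause, I would separate it off as a purely syntactic observation about the shape of proof trees, independent of the semantics. The rules $(\someone)$, $(\sometwo)$, and $(\darii)$ all have a conclusion of the form $\some{\cdot}{\cdot}$, whereas $(\axiom)$, $(\barbara)$, and $(\anti)$ all have conclusions of the form $\all{\cdot}{\cdot}$ and premises only of the form $\all{\cdot}{\cdot}$. Consequently, in any proof tree whose root is an $\all$-sentence, every node is an $\all$-sentence and every internal node is the conclusion of $(\axiom)$, $(\barbara)$, or $(\anti)$; this follows by structural induction on the tree, since any rule producing an $\all$-conclusion must be one of these three, and their premises are themselves $\all$-sentences to which the induction hypothesis applies. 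Thus any $\proves_0$-proof of $\all{x}{y}$ already uses only the three rules claimed, which incidentally shows that the preorder $\leq$ used above could equivalently have been defined using only $(\axiom)$, $(\barbara)$, and $(\anti)$.

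The main obstacle here is bookkeeping rather than a deep difficulty: the Truth Lemma and Lemma~\ref{first-canonical-model} already carry the semantic weight, so the real work is choosing $\Terms$ large enough that $\Model \models \Gamma$ while keeping $x$ and $y$ as genuine domain elements, and identifying the singleton $\set{x}$ as the point witnessing $\semantics{x}\not\subseteq\semantics{y}$. One subtlety worth flagging is that this canonical model makes every sentence $\some{u}{v}$ true for $u,v \in \Terms$ (the pair $\set{u,v}$ lies in $\semantics{u}\cap\semantics{v}$), so it is useless for refuting $\some$-conclusions; this is exactly why the theorem is restricted to conclusions of the form $\all{x}{y}$, and a different construction will be needed to handle the $\some$ case.
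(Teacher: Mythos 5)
Your argument is correct and is essentially the paper's own proof: the paper uses the same canonical model $\Model(\Gamma,\Terms)$, the same two lemmas, and the same witness $\set{x}$, merely phrased in the direct rather than contrapositive direction, and the same observation about rule shapes for the \emph{moreover} clause. Nothing further is needed.
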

\begin{proof}
Choose $\Terms$ so that it contains $x$, $y$, and all terms in $\Gamma$. If $\Gamma\models \all{x}{y}$, then $\Model(\Gamma,\Terms) \models \all{x}{y}$ by Lemma~\ref{first-canonical-model}. Then $\set{x}\in \semantics{x}\subseteq \semantics{y}$ by $(\axiom)$ and Lemma~\ref{first-Truth-Lemma}. By Lemma~\ref{first-Truth-Lemma} again, $\Gamma\proves_0 \all{x}{y}$.

To see that the proof uses only the rules $(\axiom)$, $(\barbara)$, and $(\anti)$, we just need to examine the rules in the proof system and note that no rule which produces a conclusion of the form $(\all{x}{y})$ has a premise of the form $(\some{a}{b})$. 
\end{proof}

The proof of Theorem~\ref{theorem-allcomplete} shows that if $\Gamma\not\proves_0 \all{x}{y}$, then there is a countermodel of size $O(n^2)$, where $n$ is the complexity of $\Gamma\cup \set{\all{x}{y}}$. However, as noted in Lemma~\ref{first-canonical-model}, $\Model(\Gamma,\Terms)$ satisfies every sentence $(\some{x}{y})$ with $x,y\in \Terms$. To obtain a countermodel for sentences of this form, we will look at a submodel of $\Model(\Gamma,\Terms)$.

\paragraph{The second canonical model of a theory $\Gamma$}
Let $M'$ be the set of unordered pairs $\set{t,u}$ of terms in $\mathbb{T}$ such that $\Gamma\proves_0\some{t}{u}$. 
Note that we allow $t = u$.   Define a model $\Model'(\Gamma,\Terms)$ with domain $M'$ just as in (\ref{canonical-model}). 

The proof system $\proves_0$ is not complete for sentences of the form $(\some{x}{y})$, but we can prove a partial completeness result under the following additional hypothesis on $\Gamma$, a form of which was first introduced by McAllester and Givan~\cite{logic:mcA+G92}. 

\begin{definition} 
We say that \emph{$\Gamma$ determines existentials for $\Terms$} if, for all verbs $r\in\bR$ and all terms $x,y\in \Terms$, either $\Gamma\proves_0 \some{x}{x}$ or $\Gamma\proves_0 \all{y}{(\allterm{r}{x})}$.
\end{definition}

The intuition behind this definition is that in any model $\Model$, for any term $x$, either $\Model\models\some{x}{x}$, or $\semantics{x} = \emptyset$, in which case $\semantics{\allterm{r}{x}} = M$ for any verb $r$, and $\Model \models \all{y}{(\allterm{r}{x})}$ for any term $y$.

\begin{lemma}\label{lemma-determines-existentials-Truth-Lemma}
Suppose $\Gamma$ determines existentials for $\Terms$. Then:
\begin{enumerate}[(1)]
\item In $\Model'(\Gamma,\Terms)$, for all terms $x\in \Terms$, $\semantics{x} =  \set{\set{t,u}\in M' : t\leq x \mbox{ or } u\leq x}$.
\item If $(\all{x}{y})\in \Gamma$, then $\Model'(\Gamma,\Terms)\models \all{x}{y}$.
\item If $x,y\in \Terms$ and $(\some{x}{y})\in \Gamma$, then $\Model'(\Gamma,\Terms)\models \some{x}{y}$. 
\end{enumerate}
As a consequence, if $\Gamma$ is any theory, $\Terms$ contains all subterms of sentences in $\Gamma$, and $\Gamma^*\supseteq \Gamma$ is a theory which determines existentials for $\Terms$, then $\Model'(\Gamma^*,\Terms)\models \Gamma$.
\end{lemma}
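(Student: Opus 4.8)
The plan is to treat part (1) as the heart of the lemma and derive everything else from it. Part (1) is a Truth Lemma for the restricted model $\Model'(\Gamma,\Terms)$, and I would prove it by induction on the term $x$, following the template of Lemma~\ref{first-Truth-Lemma} but watching the one place where the two models genuinely differ: a singleton $\set{x}$ lies in the domain $M'$ only when $\Gamma\proves_0\some{x}{x}$, whereas in the unrestricted model every singleton is automatically available. This is exactly the gap that the hypothesis ``$\Gamma$ determines existentials for $\Terms$'' is designed to fill. Once part (1) is in hand, parts (2) and (3) are short computations using part (1) together with the base rules, and the final ``consequence'' follows by applying (2) and (3) to $\Gamma^*$, using $\Gamma\subseteq\Gamma^*$ and the assumption that $\Terms$ contains all subterms of $\Gamma$.

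For the induction in part (1), the noun case is immediate from the definition of the model. For a term $(\allterm{r}{x})\in\Terms$ (so $x\in\Terms$ by closure under subterms), I would split on the disjunction supplied by determining existentials. If $\Gamma\proves_0\some{x}{x}$, then $\set{x}\in M'$, and since $x\leq x$ by $(\axiom)$ the induction hypothesis gives $\set{x}\in\semantics{x}$; from here the argument is identical to Lemma~\ref{first-Truth-Lemma} in both directions (the forward direction tests $\set{t,u}\in\semantics{\allterm{r}{x}}$ against $\set{x}$ to extract $t\leq\allterm{r}{x}$ or $u\leq\allterm{r}{x}$, and the backward direction uses $(\anti)$ and $(\barbara)$). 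If instead $\Gamma\not\proves_0\some{x}{x}$, then determining existentials forces $y\leq\allterm{r}{x}$ for every $y\in\Terms$; in particular every pair in $M'$ satisfies the right-hand condition, so the claimed set is all of $M'$, and it remains to check that $\semantics{\allterm{r}{x}}=M'$ as well.

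The main obstacle is precisely this second subcase, and the key observation is that $\semantics{x}=\emptyset$ whenever $\Gamma\not\proves_0\some{x}{x}$. To see this, suppose $\set{v,w}\in\semantics{x}$. Then $\set{v,w}\in M'$ gives $\Gamma\proves_0\some{v}{w}$, and the induction hypothesis gives (without loss of generality) $v\leq x$. Chaining the base rules---$(\someone)$ to obtain $\some{v}{v}$, then $(\darii)$ with $\all{v}{x}$ to obtain $\some{v}{x}$, then $(\sometwo)$ and $(\someone)$ to obtain $\some{x}{x}$---contradicts the case assumption. Hence $\semantics{x}=\emptyset$, so $\semantics{\allterm{r}{x}}=M'$ vacuously, and both sides of the desired identity are equal to $M'$.

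Finally, for part (2), given $(\all{x}{y})\in\Gamma$ (with $x,y\in\Terms$, as in the intended application), I take $\set{t,u}\in\semantics{x}$, use part (1) to get $t\leq x$ without loss of generality, compose with $x\leq y$ (available since $(\all{x}{y})$ is a leaf of a proof tree) via $(\barbara)$ to get $t\leq y$, and conclude $\set{t,u}\in\semantics{y}$ by part (1) again. For part (3), given $x,y\in\Terms$ and $(\some{x}{y})\in\Gamma$, I note $\set{x,y}\in M'$ by definition of $M'$ and observe, using $x\leq x$, $y\leq y$, and part (1), that $\set{x,y}\in\semantics{x}\cap\semantics{y}$, so $\Model'(\Gamma,\Terms)\models\some{x}{y}$. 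The consequence then follows immediately: each sentence of $\Gamma$ also lies in $\Gamma^*$, which determines existentials for $\Terms$, and its subterms lie in $\Terms$, so parts (2) and (3) apply to $\Model'(\Gamma^*,\Terms)$ and yield $\Model'(\Gamma^*,\Terms)\models\Gamma$.
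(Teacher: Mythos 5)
Your proof is correct and follows essentially the same route as the paper: an induction mirroring Lemma~\ref{first-Truth-Lemma}, with the determines-existentials hypothesis invoked at exactly the same point, namely the $(\allterm{r}{x})$ step when $\Gamma\not\proves_0\some{x}{x}$, and with parts (2), (3), and the consequence handled identically. The only (harmless) difference is in that subcase: the paper simply observes that $t\leq(\allterm{r}{x})$ holds outright, so the forward inclusion is immediate, whereas you additionally verify $\semantics{x}=\emptyset$ via the {\sf some}-rules so that both sides of the identity equal $M'$; either dispatch is fine.
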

\begin{proof}
The proof of (1) is exactly like the proof of Lemma~\ref{first-Truth-Lemma}, with the following adjustment: If $\set{t,u}\in \semantics{\allterm{r}{x}}$ and $\set{x}\in M'$, the proof in Lemma~\ref{first-Truth-Lemma} goes through as written. But if $\set{x}\notin M'$, then $\Gamma\not\proves_0 \some{x}{x}$, so $t\leq \allterm{r}{x}$ (and also $u\leq \allterm{r}{x}$), since $\Gamma$ determines existentials for $\Terms$.

The proof of (2) is exactly as in the proof of Lemma~\ref{first-canonical-model}. 

The proof of (3) is also exactly as in the proof of Lemma~\ref{first-canonical-model}, with the following adjustment: We need to use the fact that $(\some{x}{y})\in \Gamma$ to see that $\set{x,y}\in M'$. 

Putting this together, suppose $\Gamma$ is any theory, $\Terms$ contains all subterms of sentences in $\Gamma$, and $\Gamma^*\supseteq\Gamma$ determines existentials for $\Terms$. If $(\all{x}{y})\in \Gamma\subseteq \Gamma^*$, then $\Model'(\Gamma^*,\Terms)\models \all{x}{y}$. And if $(\some{x}{y})\in \Gamma\subseteq \Gamma^*$, then since $x,y\in \Terms$, $\Model'(\Gamma^*,\Terms)\models \some{x}{y}$. 
\end{proof}

\begin{theorem} \label{theorem-detextcompleteness}
Suppose that $\Terms$ contains $x$, $y$, and all subterms of sentences in $\Gamma$, and $\Gamma^*\supseteq \Gamma$ is a theory which determines existentials for $\Terms$. If $\Gamma\models \some{x}{y}$, then $\Gamma^*\proves_0 \some{x}{y}$. 
\end{theorem}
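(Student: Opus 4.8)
The plan is to mimic the structure of the proof of Theorem~\ref{theorem-allcomplete}, but using the second canonical model $\Model'(\Gamma^*,\Terms)$ in place of the first. First I would invoke Lemma~\ref{lemma-determines-existentials-Truth-Lemma}: since $\Terms$ contains all subterms of sentences in $\Gamma$, since $\Gamma^*\supseteq \Gamma$, and since $\Gamma^*$ determines existentials for $\Terms$, the consequence clause of that lemma gives $\Model'(\Gamma^*,\Terms)\models\Gamma$. Combining this with the hypothesis $\Gamma\models\some{x}{y}$, we conclude $\Model'(\Gamma^*,\Terms)\models\some{x}{y}$; note that $x,y\in\Terms$, so both terms are interpreted in this model. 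By the definition of the truth relation, this means $\semantics{x}\cap\semantics{y}\neq\emptyset$, so we may fix a witness $\set{t,u}\in\semantics{x}\cap\semantics{y}$. By the definition of $M'$, this witness satisfies $\Gamma^*\proves_0\some{t}{u}$, and by part~(1) of Lemma~\ref{lemma-determines-existentials-Truth-Lemma} we know that $t\leq x$ or $u\leq x$, and also $t\leq y$ or $u\leq y$.

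The remaining work is to convert these facts about $t$ and $u$ into a proof of $\some{x}{y}$. The key tool is a derived monotonicity rule: whenever $\Gamma^*\proves_0\some{a}{b}$ with $a\leq x$ and $b\leq y$, we also have $\Gamma^*\proves_0\some{x}{y}$. This follows by applying $(\darii)$ to $\some{a}{b}$ and $\all{b}{y}$ to get $\some{a}{y}$, then $(\sometwo)$ to get $\some{y}{a}$, then $(\darii)$ with $\all{a}{x}$ to get $\some{y}{x}$, and finally $(\sometwo)$ to get $\some{x}{y}$. With this rule in hand, I would carry out a short case analysis on the witness. Since $\set{t,u}$ is an unordered pair and $\some{t}{u}$ yields $\some{u}{t}$ by $(\sometwo)$, we may assume without loss of generality that $t\leq x$. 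If additionally $u\leq y$, the monotonicity rule applies directly to $\some{t}{u}$ to give $\some{x}{y}$. If instead $t\leq y$, I would first apply $(\someone)$ to $\some{t}{u}$ to obtain $\some{t}{t}$, and then apply the monotonicity rule with $a=b=t$. In every case we reach $\Gamma^*\proves_0\some{x}{y}$, as desired.

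The only genuinely delicate point is the case analysis forced by the unordered-pair structure of the domain: the witness $\set{t,u}$ may lie in $\semantics{x}$ because of either coordinate and in $\semantics{y}$ because of either coordinate, so one must ensure the derived monotonicity rule can be applied uniformly across all combinations. The role of $(\someone)$ is precisely to collapse to the diagonal $\some{t}{t}$ in the subcase where the \emph{same} term $t$ witnesses membership in both $\semantics{x}$ and $\semantics{y}$, since the raw existential sentence available from $M'$ is $\some{t}{u}$ rather than $\some{t}{t}$. Everything else is a routine combination of the rules $(\darii)$ and $(\sometwo)$ of $\proves_0$, together with the facts already extracted from Lemma~\ref{lemma-determines-existentials-Truth-Lemma}.
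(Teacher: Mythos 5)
Your proposal is correct and follows essentially the same route as the paper: invoke Lemma~\ref{lemma-determines-existentials-Truth-Lemma} to get $\Model'(\Gamma^*,\Terms)\models\Gamma$, hence $\Model'(\Gamma^*,\Terms)\models\some{x}{y}$, extract a witness $\set{t,u}$ with $\Gamma^*\proves_0\some{t}{u}$ and the appropriate $\leq$-facts, and finish with $(\someone)$, $(\sometwo)$, and $(\darii)$. The paper simply states ``by analyzing the four cases'' where you spell the case analysis out explicitly, which is a fine (and slightly more detailed) rendering of the same argument.
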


\begin{proof}    
Since $\Gamma\subseteq \Gamma^*$, $\Model'(\Gamma^*,\Terms)\models \Gamma$ by Lemma~\ref{lemma-determines-existentials-Truth-Lemma}, so $\Model'(\Gamma^*,\Terms)\models \some{x}{y}$. Suppose $\set{t,u}\in \semantics{x}\cap \semantics{y}$. Since $\set{t,u}\in M'$, $\Gamma^*\proves_0 \some{t}{u}$. And by Lemma~\ref{lemma-determines-existentials-Truth-Lemma}, $\Gamma^*\proves_0 \all{v}{x}$ and $\Gamma^*\proves_0\all{w}{y}$ for some $v,w\in\set{t,u}$.
 By analyzing the four cases and using $(\someone)$, $(\sometwo)$, and $(\darii)$,
 we see that  $\Gamma^* \proves_0 \some{x}{y}$.
\end{proof}

\begin{corollary}
If $\Terms$ contains all subterms of sentences in $\Gamma\cup \set{\phi}$ and $\Gamma$ determines existentials for $\Terms$, then $\Gamma\models \varphi$ if and only if $\Gamma\proves_0 \varphi$.
\end{corollary}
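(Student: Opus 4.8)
The plan is to observe that this corollary simply packages soundness together with the two completeness theorems already established, applied according to the shape of $\varphi$. The ``if'' direction, $\Gamma\proves_0\varphi \Rightarrow \Gamma\models\varphi$, is nothing more than soundness of the rules in Figure~\ref{baserules}; this holds for every theory and requires none of the hypotheses on $\Terms$ or on determining existentials.

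For the ``only if'' direction I would split into cases on the sentence former of $\varphi$, since in $\langtwo$ every sentence has the form $(\all{x}{y})$ or $(\some{x}{y})$. If $\varphi = (\all{x}{y})$, then Theorem~\ref{theorem-allcomplete} applies directly: from $\Gamma\models\all{x}{y}$ we obtain $\Gamma\proves_0\all{x}{y}$. Note that this case uses neither the determining-existentials hypothesis nor the closure of $\Terms$ under subterms; it is pure $\langone$-style reasoning, and indeed the cited theorem only needs $\Terms$ to contain $x$, $y$, and the terms of $\Gamma$.

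If $\varphi = (\some{x}{y})$, I would invoke Theorem~\ref{theorem-detextcompleteness} with the choice $\Gamma^* = \Gamma$. What makes this legitimate is exactly the standing hypothesis of the corollary: $\Gamma$ itself determines existentials for $\Terms$, so there is no need to pass to a properly larger theory. I also need the side conditions of that theorem, namely that $\Terms$ contains $x$, $y$, and all subterms of sentences in $\Gamma$. These follow from the hypothesis that $\Terms$ contains all subterms of sentences in $\Gamma\cup\set{\phi}$: the terms $x$ and $y$ are themselves subterms of $\phi = (\some{x}{y})$, and the subterms of the sentences of $\Gamma$ are covered directly. With these conditions in place, $\Gamma\models\some{x}{y}$ yields $\Gamma^*\proves_0\some{x}{y}$, which is $\Gamma\proves_0\some{x}{y}$.

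There is essentially no hard step here, since the substantive work has already been done in Theorems~\ref{theorem-allcomplete} and~\ref{theorem-detextcompleteness}. The only points worth double-checking are the bookkeeping that permits taking $\Gamma^* = \Gamma$ in the existential case, which is precisely where the determining-existentials hypothesis on $\Gamma$ is used, and the verification that the closure assumption on $\Terms$ delivers the membership conditions $x,y\in\Terms$ demanded by Theorem~\ref{theorem-detextcompleteness}. I would write the argument out in these two short cases and be done.
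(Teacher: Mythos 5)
Your proposal is correct and follows the paper's own proof essentially verbatim: the forward direction is soundness of $\proves_0$, and the converse is obtained by case-splitting on the form of $\varphi$ and invoking Theorem~\ref{theorem-allcomplete} for {\sf all}-sentences and Theorem~\ref{theorem-detextcompleteness} with $\Gamma^* = \Gamma$ for {\sf some}-sentences. The extra bookkeeping you supply (checking that $x,y\in\Terms$ because they are subterms of $\varphi$) is exactly the right verification and is merely left implicit in the paper.
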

\begin{proof}
The implication $\Gamma\proves_0\varphi$ implies $\Gamma\models \varphi$ is just soundness of the rules in $\proves_0$. The converse follows immediately from Theorems~\ref{theorem-allcomplete} and~\ref{theorem-detextcompleteness}, taking $\Gamma^* = \Gamma$ in Theorem~\ref{theorem-detextcompleteness}.
\end{proof}

We conclude this section with two proof-theoretic observations about the system $\proves_0$. 

If $\rvec = r_1,\dots,r_n$ is a sequence of verbs and $x$ is a term, we use the notation $(\allterm{\rvec}{x})$  for the term $(\allterm{r_1}{(\allterm{r_2}{(\dots (\allterm{r_n}{x}))})})$. When $\rvec$ is the empty sequence, $(\allterm{\rvec}{x}) = x$.

If $\psi = \all{u}{v}$, we define 
\[
\Anti(\rvec,\psi) = \begin{cases} 
\all{(\allterm{\rvec}{u})}{(\allterm{\rvec}{v})} & \text{if the length of $\rvec$ is even}\\
\all{(\allterm{\rvec}{v})}{(\allterm{\rvec}{u})} & \text{if the length of $\rvec$ is odd}.
\end{cases}
\]
Note that $\set{\psi}\proves_0 \Anti(\rvec,\psi)$ by repeated applications of $(\anti)$.

\begin{definition}\label{Gamma-sequence}
Let $\Gamma$ be a theory. A \emph{$\Gamma$-sequence} is a finite sequence of terms $t_1,\dots,t_n$, such that for all $1\leq i <n$ there is a sentence $\psi = (\all{a}{b})\in \Gamma$ and a sequence of verbs $\rvec$ such that $(\all{t_i}{t_{i+1}}) = \Anti(\rvec,\psi_i)$. 
\end{definition}

\begin{lemma}\label{lemma-allproof} $\Gamma\proves_0 \all{x}{y}$ if and only if there is a $\Gamma$-sequence of terms $t_1,\dots,t_n $
such that $x = t_1$ and $y = t_n$.
\end{lemma}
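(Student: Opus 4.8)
The easy direction is ($\Leftarrow$). Suppose we have a $\Gamma$-sequence $t_1,\dots,t_n$ with $x = t_1$ and $y = t_n$. For each consecutive pair, we have $(\all{t_i}{t_{i+1}}) = \Anti(\rvec_i,\psi_i)$ for some $\psi_i = (\all{a_i}{b_i}) \in \Gamma$ and some verb sequence $\rvec_i$. By the remark just before Definition~\ref{Gamma-sequence}, $\set{\psi_i}\proves_0 \Anti(\rvec_i,\psi_i)$ by repeated applications of $(\anti)$, so since $\psi_i \in \Gamma$ we get $\Gamma\proves_0 \all{t_i}{t_{i+1}}$. Chaining these $n-1$ derivations together with $(\barbara)$ yields $\Gamma\proves_0 \all{t_1}{t_n}$, i.e.\ $\Gamma\proves_0 \all{x}{y}$. (When $n=1$, so $x=y$, we instead invoke $(\axiom)$.)

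The harder direction is ($\Rightarrow$), and this is where the main work lies. The natural approach is induction on the structure of a proof tree witnessing $\Gamma\proves_0 \all{x}{y}$, using the crucial observation from the proof of Theorem~\ref{theorem-allcomplete}: no rule producing a conclusion of the form $(\all{x}{y})$ has a premise of the form $(\some{a}{b})$. Hence any such proof tree uses only $(\axiom)$, $(\barbara)$, and $(\anti)$, and I only need to handle these three cases. I would prove the statement ``if $\Gamma\proves_0 \all{x}{y}$ using only $(\axiom)$, $(\barbara)$, $(\anti)$, then there is a $\Gamma$-sequence from $x$ to $y$'' by induction on the height of the proof tree. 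For a leaf, the sentence $(\all{x}{y})$ lies in $\Gamma$, so taking $\rvec$ empty we have $(\all{x}{y}) = \Anti(\rvec, (\all{x}{y}))$, and $x,y$ is a $\Gamma$-sequence. For $(\axiom)$, the conclusion is $(\all{x}{x})$, and the length-one sequence $x$ works. For $(\barbara)$, the conclusion $(\all{x}{z})$ comes from $(\all{x}{y})$ and $(\all{y}{z})$; by induction there are $\Gamma$-sequences from $x$ to $y$ and from $y$ to $z$, and concatenating them (identifying the shared endpoint $y$) gives a $\Gamma$-sequence from $x$ to $z$.

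The only genuinely delicate case is $(\anti)$. Here the conclusion has the form $(\all{(\allterm{r}{v})}{(\allterm{r}{u})})$, derived from a premise $(\all{u}{v})$ for which, by induction, there is a $\Gamma$-sequence $u = s_1,\dots,s_m = v$. I claim that prefixing each $s_j$ by the verb $r$ yields a $\Gamma$-sequence from $(\allterm{r}{u})$ to $(\allterm{r}{v})$, \emph{run in reverse}. The key computation is that applying $\Anti(\rvec,\psi)$ and then prefixing by one more verb $r$ corresponds exactly to $\Anti(r\rvec, \psi)$ but with the roles of subject and predicate swapped (because prepending $r$ flips the parity of the length of $\rvec$). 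Concretely, if $(\all{s_j}{s_{j+1}}) = \Anti(\rvec_j, \psi_j)$, then the pair $(\allterm{r}{s_{j+1}}), (\allterm{r}{s_j})$ equals $\Anti(r\rvec_j, \psi_j)$, where $r\rvec_j$ denotes $r$ followed by $\rvec_j$. So the reversed, $r$-prefixed sequence $(\allterm{r}{s_m}),\dots,(\allterm{r}{s_1})$ is a valid $\Gamma$-sequence from $(\allterm{r}{v})$ to $(\allterm{r}{u})$. I expect verifying this parity-flip bookkeeping to be the main obstacle: one must check carefully that prefixing a verb swaps the even/odd branches of the definition of $\Anti$ and correspondingly reverses the direction of each edge, and confirm that the endpoints come out as $(\allterm{r}{v})$ and $(\allterm{r}{u})$ in the correct order. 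Once this single-step identity is established, the induction closes cleanly.
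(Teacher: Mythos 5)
Your proof is correct and follows essentially the same route as the paper's: the same easy direction via repeated $(\anti)$ and $(\barbara)$, and the same induction on proof-tree height for the converse, with the $(\anti)$ case handled by $r$-prefixing and reversing the inductively obtained $\Gamma$-sequence. The parity-flip identity you single out as the delicate point is exactly the (unstated) computation behind the paper's one-line treatment of that case, and your verification of it is sound.
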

\begin{proof}
Suppose $x = t_1,\dots,t_n = y$ is a $\Gamma$-sequence. If $n = 1$, then $x = y$, and $\Gamma\proves_0 \all{x}{y}$ by $(\axiom)$. If $n>1$, then for all $1\leq i < n$, $\Gamma\proves_0 \all{t_i}{t_{i+1}}$ by repeated applications of $(\anti)$. And by repeated applications of $(\barbara)$, $\Gamma\proves_0 \all{x}{y}$.

We prove the converse by induction on the height of the proof tree. In the base case, $(\all{x}{y})\in \Gamma$, and $x, y$ is a $\Gamma$-sequence
as desired.

Case 1: If the root of the proof tree is \[\infer[\axiom]{\all{x}{x}}{}\] then $x$ is a $\Gamma$-sequence from $x$ to $x$.

Case 2: If the root of the proof tree is \[\infer[\barbara]{\all{x}{z}}{\all{x}{y} & \all{y}{z}}\] then by induction we have $\Gamma$-sequences $t_1,\dots, t_n$ and $t_1',\dots,t_m'$ with $x = t_1$, $y = t_n = t_1'$, and $z = t_m'$. Then $t_1,\dots,t_{n-1}, t_1',\dots,t_m'$ is a $\Gamma$-sequence from $x$ to $z$.

Case 3: If the root of the proof tree is \[\infer[\anti]{\all{(\allterm{r}{y})}{(\allterm{r}{x})}}{\all{x}{y}}\] then by induction we have a $\Gamma$-sequence $t_1,\dots, t_n$ with $x = t_1$ and $y = t_n$. Then $(\allterm{r}{t_n}),\dots,(\allterm{r}{t_1})$ is a $\Gamma$-sequence from $(\allterm{r}{y})$ to $(\allterm{r}{x})$.
\end{proof}

\begin{lemma}\label{lemma-someproof}
$\Gamma\proves_0 \some{x}{y}$ if and only if there is a sentence $(\some{t_1}{t_2})\in \Gamma$ such that  $\Gamma\proves_0 \all{t_i}{x}$ and $\Gamma\proves_0 \all{t_j}{y}$, for some $i,j\in \set{1,2}$. 
\end{lemma}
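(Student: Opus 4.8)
The plan is to prove the two directions separately, with the right-to-left direction an explicit derivation and the left-to-right direction an induction on proof trees. The crucial structural observation, analogous to the one used in the proof of Theorem~\ref{theorem-allcomplete}, is that among the six rules of $\proves_0$, only $(\someone)$, $(\sometwo)$, and $(\darii)$ have a conclusion of the form $(\some{a}{b})$, and each of these rules has exactly one premise of that form. Thus any proof of a sentence $(\some{x}{y})$ ``funnels'' through a single existential sentence drawn from $\Gamma$, which will turn out to be the sentence $(\some{t_1}{t_2})$ in the statement.

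For the direction $(\Leftarrow)$, suppose $(\some{t_1}{t_2})\in \Gamma$ and $\Gamma\proves_0 \all{t_i}{x}$ and $\Gamma\proves_0 \all{t_j}{y}$ for some $i,j\in\set{1,2}$. First, using $(\someone)$ and $(\sometwo)$ one can derive $(\some{t_i}{t_j})$ from $(\some{t_1}{t_2})$ for any choice of $i,j$: $(\sometwo)$ swaps the two sides and $(\someone)$ collapses them to a singleton. Then $(\darii)$ applied to $(\some{t_i}{t_j})$ and $\all{t_j}{y}$ gives $(\some{t_i}{y})$; after a swap by $(\sometwo)$, a second application of $(\darii)$ with $\all{t_i}{x}$ gives $(\some{y}{x})$, and a final $(\sometwo)$ yields $(\some{x}{y})$.

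For the direction $(\Rightarrow)$, I would induct on the height of the proof tree witnessing $\Gamma\proves_0 \some{x}{y}$. In the base case $(\some{x}{y})\in \Gamma$, so I take $(\some{t_1}{t_2}) = (\some{x}{y})$ and use $(\axiom)$ to get $\all{x}{x}$ and $\all{y}{y}$. By the structural observation above, the last rule of any taller tree must be one of $(\someone)$, $(\sometwo)$, $(\darii)$. For $(\someone)$ and $(\sometwo)$ the conclusion is obtained by collapsing or swapping the sides of the existential premise, and the inductive hypothesis transfers immediately, reusing or reindexing the witnesses $t_i,t_j$. The only case requiring work is $(\darii)$: here $(\some{x}{y})$ comes from $(\some{x}{b})$ and $(\all{b}{y})$, the inductive hypothesis supplies $(\some{t_1}{t_2})\in\Gamma$ with $\Gamma\proves_0 \all{t_i}{x}$ and $\Gamma\proves_0 \all{t_j}{b}$, and one chains $\all{t_j}{b}$ with the side premise $\all{b}{y}$ via $(\barbara)$ to obtain $\Gamma\proves_0 \all{t_j}{y}$, completing the induction.

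I do not expect any serious obstacle; the argument is routine once the structural observation about which rules produce existential conclusions is in hand. The only points demanding care are bookkeeping the indices $i,j\in\set{1,2}$ across the $(\sometwo)$ swaps, and ensuring in the $(\darii)$ case that the existential premise, rather than the universal side premise, is the one carrying the $\Gamma$-witness; both are mechanical.
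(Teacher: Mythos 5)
Your proposal is correct and follows essentially the same route as the paper's proof: the same explicit derivation via $(\someone)$, $(\sometwo)$, and two applications of $(\darii)$ for the right-to-left direction, and the same induction on proof-tree height with a case split over $(\someone)$, $(\sometwo)$, and $(\darii)$ (using $(\barbara)$ to chain in the universal side premise) for the converse. No gaps.
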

\begin{proof}
Suppose there is a sentence $\some{t_1}{t_2}\in \Gamma$ such that $\Gamma\proves_0 \all{t_i}{x}$ and $\Gamma\proves_0 \all{t_j}{y}$, for some $i,j\in \set{1,2}$. Then applying $(\someone)$ or $(\sometwo)$ if necessary, $\Gamma\proves_0 \some{t_i}{t_j}$ By two applications of $(\darii)$ and $(\sometwo)$, $\Gamma\proves_0 \some{x}{y}$. 

We prove the converse by induction on the height of the proof tree. In the base case, when $(\some{x}{y})\in\Gamma$,  we have $\Gamma\proves_0 \all{x}{x}$ and $\Gamma\proves_0 \all{y}{y}$ by $(\axiom)$.

Case 1: If the root of the proof tree is \[\infer[\someone]{\some{x}{x}}{\some{x}{y}}\]then by induction there is a sentence $(\some{t_1}{t_2})\in \Gamma$ such that $\Gamma\proves_0 \all{t_i}{x}$ for some $i\in \set{1,2}$.

Case 2: If the root of the proof tree is \[\infer[\sometwo]{\some{y}{x}}{\some{x}{y}}\]  then by induction there is a sentence $(\some{t_1}{t_2})\in \Gamma$ such that $\Gamma\proves_0 \all{t_i}{y}$ and $\Gamma\proves_0 \all{t_j}{x}$, for some $i,j\in \set{1,2}$. 

Case 3: If the root of the proof tree is \[\infer[\darii]{\some{x}{z}}{\some{x}{y} & \all{y}{z}}\]  then by induction there is a sentence $(\some{t_1}{t_2})\in \Gamma$ such that $\Gamma\proves_0 \all{t_i}{x}$ and $\Gamma\proves_0 \all{t_j}{y}$, for some $i,j\in \set{1,2}$. But also $\Gamma\proves_0 \all{y}{z}$, so $\Gamma\proves_0 \all{t_j}{z}$ by $(\barbara)$.
\end{proof}

\subsection{No sound and complete syllogistic proof system for $\langtwo$}
\label{section-no-finite}

In this section, we prove that there is no sound and complete syllogistic proof system
for $\langtwo$.
 This suggests that we need non-syllogistic devices like those which we shall see in coming sections of this paper.
But this talk of rules being ``needed'' is not precise, and at the end of the day,  it is not quite what we shall prove.
At the same time, what we do prove is in a real way stronger than the statement above.   So we need to make all of this precise.
 
We return to our discussion of syllogistic rules in Section~\ref{section-introduction-rules}.
Every syllogistic proof system  defines a \emph{provability relation}
between theories and sentences.   In this section, we write this relation 
as  $\Gamma\proves^*\phi$.  To be \emph{sound}, we require that if
$\Gamma\proves^*\phi$, then $\Gamma\models\phi$. To be \emph{complete}, we require that 
if $\Gamma\models\phi$, then also $\Gamma\proves^*\phi$.

The   \emph{degree $k$ consequence relation} $\models_k$
is the relation between sets $\Gamma$ and sentences $\phi$ defined as follows: $\Gamma\models_k \phi$
if there is a finite tree with nodes labeled by sentences, such that each node is either a leaf and in $\Gamma$, or else is a  sentence $\phi$ 
with children $\psi_1$, $\ldots$, $\psi_j$ for some $j\leq k$, and such that  $\set{\psi_1, \ldots, \psi_j}\models \phi$.
If we have a sound syllogistic proof system $\proves^*$, then since it has only finitely many rules, each with finitely many premises, there is a number $k$ (the maximum number of premises in any rule in $\proves^*$) such that
whenever $\Gamma\proves^*\phi$, we also have $\Gamma\models_k\phi$.

\begin{theorem}
For all $n$, there is a theory $\Gamma_{n+1}$ and a sentence $\phi$ such that $\Gamma_{n+1} \models\phi$, and $\Gamma_{n+1}\not\models_n\phi$. As a consequence, there is no sound and complete syllogistic proof system for $\langtwo$.
\label{theorem-stronger}
\end{theorem}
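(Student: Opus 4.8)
The second assertion is a routine consequence of the first, via the degree-$k$ observation recorded just before the statement: if $\proves^*$ were a sound and complete syllogistic proof system, let $k$ be the maximum number of premises appearing in its (finitely many) rules, so that $\Gamma\proves^*\phi$ implies $\Gamma\models_k\phi$. Taking $n=k$, completeness would give $\Gamma_{k+1}\proves^*\phi$, hence $\Gamma_{k+1}\models_k\phi$, contradicting the first assertion. So the whole task is to construct, for each $n$, a theory $\Gamma_{n+1}$ and a sentence $\phi$ with $\Gamma_{n+1}\models\phi$ but $\Gamma_{n+1}\not\models_n\phi$. The plan is to reduce $\not\models_n$ to a purely semantic covering property. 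I would arrange that $\phi\notin\Gamma_{n+1}$ and that there are $n+1$ models $B_1,\dots,B_{n+1}$, each refuting $\phi$, with the property (CP): \emph{every sentence $\psi\ne\phi$ with $\Gamma_{n+1}\models\psi$ is false in at most one of the $B_i$}. Granting (CP), suppose toward a contradiction that $\Gamma_{n+1}\models_n\phi$, and fix a derivation tree with the fewest nodes. Since $\phi\notin\Gamma_{n+1}$, the root is not a leaf, so it has children $\psi_1,\dots,\psi_j$ with $j\le n$, each satisfying $\Gamma_{n+1}\models\psi_t$ (as $\models_n$ is sound), and with $\{\psi_1,\dots,\psi_j\}\models\phi$. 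By minimality no $\psi_t$ equals $\phi$ (else its subtree would be a smaller derivation of $\phi$), so by (CP) each $\psi_t$ is false in at most one $B_i$; their union of ``bad'' models has size at most $j\le n<n+1$, so some $B_i$ satisfies every $\psi_t$ and therefore satisfies $\phi$ — contradicting $B_i\not\models\phi$. Note that (CP) already forces the needed ``primeness'' of $\phi$: any consequence $\psi\ne\phi$ with $\psi\models\phi$ would be false in \emph{all} $n+1$ models, violating (CP).

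For the construction I would take $\Gamma_{n+1}$ to be $n+1$ vocabulary-disjoint copies of the basic emptiness gadget that already obstructs completeness of $\proves_0$, namely the dichotomy behind \emph{determines existentials} (Lemma~\ref{lemma-determines-existentials-Truth-Lemma}): for a term $x$ and verb $r$, in every model either $\some{x}{x}$ holds, or $\semantics{x}=\emptyset$, whence $\semantics{\allterm{r}{x}}=M$ and $\all{y}{(\allterm{r}{x})}$ holds for every $y$. This is a disguised proof by cases, and it is exactly what no syllogistic rule can internalize. The sentence $\phi$ is to be a single existential, over a shared part of the vocabulary, engineered so that it becomes entailed only after the dichotomy has been resolved in \emph{all} $n+1$ copies at once: the $2^{n+1}$-way case split over the copies has $\phi$ true at every leaf, so $\Gamma_{n+1}\models\phi$, which is assertion (a). Verifying (a) I would do either directly in the semantics or through the canonical models $\Model'(\Gamma^*,\Terms)$ of Lemma~\ref{lemma-determines-existentials-Truth-Lemma} (choosing $\Gamma^*$ to realize each branch). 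The cleverness, inherited from the $\cR$-argument of~\cite{phmoss}, is in packing this global ``all copies resolved'' condition into one existential sentence using nested $(\allterm{r}{\cdot})$ terms.

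The countermodels $B_i$ are then the obvious perturbations: $B_i$ puts copy $i$ into its empty/vacuous branch and all other copies into their nonempty branch, so that the witness needed for $\phi$ is destroyed and $B_i\not\models\phi$, while $B_i$ still models every premise except those of copy $i$. What must be proved is (CP): every consequence $\psi\ne\phi$ is false in at most one $B_i$. Intuitively, breaking copy $i$ should falsify only those consequences that ``refer to'' copy $i$, and a single atomic sentence $(\all{u}{v})$ or $(\some{u}{v})$ can carry the existential information of at most one copy — whereas $\phi$, by design, is the unique sentence that manages to depend on all of them.

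The main obstacle is exactly this step, because (CP) quantifies over the entire, a priori uncontrolled, set of semantic consequences of $\Gamma_{n+1}$, not merely over its premises. I would attack it using the proof-theoretic handles already available: Theorems~\ref{theorem-allcomplete} and~\ref{theorem-detextcompleteness} pin down $\models$ on the theories in play, Lemma~\ref{lemma-allproof} characterizes the derivable universal sentences $(\all{u}{v})$ by $\Gamma$-sequences, and Lemma~\ref{lemma-someproof} characterizes the derivable existentials $(\some{u}{v})$ as arising from a single existential premise together with two universal derivations. Combining these, the truth value in $B_i$ of any single consequence sentence should be shown to hinge on the status of at most one copy, which yields (CP). The delicate part of the design — and where I expect the real combinatorial effort to concentrate — is making the copies independent enough that this locality holds for every ordinary consequence, while still making all $n+1$ of them genuinely necessary for the one existential $\phi$; this is the point where the construction must diverge from, and sharpen, the template of~\cite{phmoss}.
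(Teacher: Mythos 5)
Your reduction of the second assertion to the first is exactly the paper's, and your covering-property framework is a sound and in fact rather clean repackaging of the paper's combinatorial core: the paper works with the subtheories $\Delta_{n,i}$ obtained by deleting one sentence from $\Gamma_n$, proves that every relevant consequence other than the target follows from a \emph{single} premise (Lemma~\ref{lemma-Anti} for {\sf all}-sentences, via the $\Gamma$-sequence characterization and a parity argument; Lemma~\ref{lemma-r1-r1} for {\sf some}-sentences, via four explicit countermodel constructions), and then observes that $j\le n$ premises of the root must all lie in some $\Delta_{n,i^*}$. Your models $B_i$ would be precisely models of the $\Delta_{n,i}$ falsifying exactly one premise, and your minimal-tree argument replaces the paper's induction in Lemma~\ref{lem:modelsk} by a one-step argument at the root; this works provided one characterizes \emph{all} semantic consequences of $\Gamma_{n+1}$, not just the degree-$k$ ones, which for the paper's theory happens to be available. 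So the skeleton is correct and essentially parallel.

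The genuine gap is that the skeleton is all you have: the construction of $\Gamma_{n+1}$ and $\phi$, and the verification of both $\Gamma_{n+1}\models\phi$ and (CP), are deferred with phrases like ``I would take,'' ``engineered so that,'' and ``where I expect the real combinatorial effort to concentrate.'' That deferred work is the actual content of the theorem, and it occupies nearly all of Section~\ref{section-no-finite}: the king argument for Lemma~\ref{lemma-some-a-a}, the parity analysis behind Lemma~\ref{lemma-Anti}, and the four-case (with subcases) countermodel analysis of Lemma~\ref{lemma-r1-r1}. Moreover, the construction you do sketch --- $n+1$ \emph{vocabulary-disjoint} copies of the emptiness dichotomy, with $\phi$ entailed only after a $2^{n+1}$-way case split --- is not the paper's design and there is no evidence it satisfies (CP). The paper's $\Gamma_n$ is a \emph{chain}: consecutive sentences share verbs ($\phi_i$ mentions both $r_i$ and $r_{i+1}$), the nouns $a$ and $b$ are shared throughout, and the entailment $\Gamma_n\models\some{a}{a}$ needs only a single case split on whether $\semantics{a}=\emptyset$, after which the hypotheses propagate an $r_1$-king down the whole chain. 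Getting (CP) to hold is delicate precisely because the copies are \emph{not} independent; with genuinely disjoint copies it is unclear how a single sentence $(\some{u}{v})$ of $\langtwo$ could be forced by all copies jointly but by no proper subfamily. Until you exhibit concrete sentences and prove the analogues of Lemmas~\ref{lemma-Anti} and~\ref{lemma-r1-r1} for them, the proof is not done.
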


Note that the first statement does not refer to proof systems in any way.  It is completely semantic. But it immediately implies the negative result about proof systems.

\paragraph{The sets $\Gamma_n$}
For all $n$, let $\bP = \set{a,b}$, let $\bR = \set{r_1,\dots,r_n}$, and let $\Gamma_n$ be the following theory:
\begin{align*} 
\alpha &= \some{(\allterm{r_1}{(\allterm{r_1}{a})})}{(\allterm{r_1}{(\allterm{r_1}{a})})}\\
\phi_1 &=  \all{(\allterm{r_1}{b})}{(\allterm{r_2}{(\allterm{r_2}{a})})}\\
\phi_2 &=  \all{(\allterm{r_2}{b})}{(\allterm{r_3}{(\allterm{r_3}{a})})}\\
&\quad \vdots \\
\phi_i &=  \all{(\allterm{r_i}{b})}{(\allterm{r_{i+1}}{(\allterm{r_{i+1}}{a})})}\\
&\quad \vdots \\
\phi_{n-1} &=  \all{(\allterm{r_{n-1}}{b})}{(\allterm{r_n}{(\allterm{r_n}{a})})}\\
\omega &= \all{(\allterm{r_n}{b})}{a}
\end{align*}
We will use $\Gamma_n$ as a recurring example in the forthcoming sections, to demonstrate proof systems.

If $r$ is a verb, then an \emph{$r$-king} in a model $\Model$ is an element $x\in M$ such that for all $y\in M$, $x\semantics{r} y$. 

\begin{lemma}
$\Gamma_n\models\some{a}{a}$.
\label{lemma-some-a-a}
\end{lemma}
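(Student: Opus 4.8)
The plan is to argue semantically, by contraposition: I would take an arbitrary model $\Model$ with $\Model\models\Gamma_n$, assume toward a contradiction that $\semantics{a}=\emptyset$ (recall $\Model\models\some{a}{a}$ just means $\semantics{a}\neq\emptyset$), and derive a contradiction. The whole argument hinges on a single observation about the empty interpretation of $a$ and the kings introduced just before the statement. First I would note that if $\semantics{a}=\emptyset$, then for every verb $r$ we have $\semantics{\allterm{r}{a}}=M$, since the defining condition ``$m\semantics{r}n$ for all $n\in\semantics{a}$'' is vacuously satisfied. Consequently
$\semantics{(\allterm{r_i}{(\allterm{r_i}{a})})}=\set{m\in M : m\semantics{r_i}n \text{ for all } n\in M}$
is \emph{exactly} the set of $r_i$-kings of $\Model$. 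This is the key reduction: under the assumption $\semantics{a}=\emptyset$, each doubly nested term $(\allterm{r_i}{(\allterm{r_i}{a})})$ names the set of $r_i$-kings.

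With this in hand, the argument becomes a chase along the chain $\alpha,\phi_1,\dots,\phi_{n-1},\omega$. From $\alpha$ and the reduction above, the set of $r_1$-kings is nonempty, so I would fix an $r_1$-king $k$. I would then prove by induction on $i$ that $k$ is an $r_i$-king for every $1\leq i\leq n$. For the inductive step, suppose $k$ is an $r_i$-king with $i<n$. Then $k\semantics{r_i}m$ for all $m\in M$, so in particular $k\semantics{r_i}m$ for all $m\in\semantics{b}$, whence $k\in\semantics{(\allterm{r_i}{b})}$. Applying $\phi_i$, which asserts $\semantics{(\allterm{r_i}{b})}\subseteq\semantics{(\allterm{r_{i+1}}{(\allterm{r_{i+1}}{a})})}$, together with the reduction, yields that $k$ is an $r_{i+1}$-king. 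This is the mechanism by which a single king is promoted up the entire chain of verbs.

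Finally I would close the loop with $\omega$. Having shown that $k$ is an $r_n$-king, I again get $k\in\semantics{(\allterm{r_n}{b})}$, and $\omega$ asserts $\semantics{(\allterm{r_n}{b})}\subseteq\semantics{a}$, so $k\in\semantics{a}$. This contradicts $\semantics{a}=\emptyset$. Hence $\semantics{a}\neq\emptyset$ in every model of $\Gamma_n$, i.e.\ $\Gamma_n\models\some{a}{a}$.

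I expect the main obstacle to be conceptual rather than technical: spotting the reduction that collapses $(\allterm{r_i}{(\allterm{r_i}{a})})$ to the set of $r_i$-kings precisely when $\semantics{a}$ is empty. Once that is seen, the sentences $\phi_i$ are engineered exactly to pass one king from $r_i$ to $r_{i+1}$, and no estimates or case analysis are needed. It is worth flagging that this king-promotion chain is also what makes the semantic consequence hard to capture with any fixed number of premises, which is the feature that will be exploited in Theorem~\ref{theorem-stronger}: the witness $k$ must be threaded through all $n$ of the sentences $\phi_i$ simultaneously.
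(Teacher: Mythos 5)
Your proposal is correct and is essentially identical to the paper's proof: both assume $\semantics{a}=\emptyset$, observe that $(\allterm{r_i}{(\allterm{r_i}{a})})$ then denotes the set of $r_i$-kings, extract an $r_1$-king from $\alpha$, promote it through the $\phi_i$ by induction, and derive a contradiction from $\omega$. The only difference is that you spell out the king-promotion step in more detail than the paper does.
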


\begin{proof} 
Let $\Model\models\Gamma_n$.  If $\semantics{a} \neq \emptyset$, we are done.
So we shall assume that $\semantics{a} = \emptyset$.  Then for any verb $r$, $\semantics{\allterm{r}{a}} = M$.
By $\alpha$, $\Model$ contains an $r_1$-king $x$. Then $\phi_1$ implies that $x$ is also an $r_2$-king.  Continuing by induction, $\phi_i$ implies that $x$ is an $r_{i+1}$-king. Finally, $x$ is an $r_n$-king, and $\omega$ implies that $x\in \semantics{a}$, which is a contradiction.
\end{proof}

\begin{lemma} If $\Gamma_n\models \all{u}{v}$, then either $u = v$, or $(\all{u}{v}) = \Anti(\rvec,\psi)$, for some sequence $\rvec$ and some sentence $\psi\in \Gamma_n$. 
\label{lemma-Anti}
\end{lemma}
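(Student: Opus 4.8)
The plan is to convert the semantic hypothesis into a purely combinatorial statement about $\Gamma_n$-sequences and then exploit a parity invariant. First, since $\Gamma_n\models \all{u}{v}$, Theorem~\ref{theorem-allcomplete} gives $\Gamma_n\proves_0 \all{u}{v}$, and Lemma~\ref{lemma-allproof} then supplies a $\Gamma_n$-sequence $t_1,\dots,t_m$ with $u = t_1$ and $v = t_m$. The only sentences of $\Gamma_n$ of the form $(\all{a}{b})$ are $\phi_1,\dots,\phi_{n-1}$ and $\omega$ (the sentence $\alpha$ is existential), so each consecutive pair satisfies $(\all{t_i}{t_{i+1}}) = \Anti(\rvec,\psi)$ for some verb sequence $\rvec$ and some $\psi\in\set{\phi_1,\dots,\phi_{n-1},\omega}$. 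The goal is then to show that no such sequence can have $m\geq 3$; once this is done, $m = 1$ yields $u = v$, while $m = 2$ yields $(\all{u}{v}) = \Anti(\rvec,\psi)$ directly, which is exactly the desired conclusion.

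The engine of the argument is a depth invariant. Since the only term former in $\langtwo$ is $(\allterm{r}{x})$, every term $t$ is a nested application of verb formers to a single noun; write $d(t)$ for the number of nested $(\allterm{r}{\cdot})$ operators in $t$. First I would observe that each universal axiom $\psi\in\set{\phi_1,\dots,\phi_{n-1},\omega}$ has the form $(\all{a}{b})$ with $d(a) = 1$ (odd) and $d(b)\in\set{0,2}$ (even): the left-hand sides are exactly the terms $(\allterm{r_i}{b})$ and $(\allterm{r_n}{b})$, while the right-hand sides are $(\allterm{r_{i+1}}{(\allterm{r_{i+1}}{a})})$ and $a$. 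Next I would check that $\Anti$ preserves the pattern ``odd depth on the left, even depth on the right.'' Writing $\Anti(\rvec,\psi) = (\all{s}{s'})$, prepending $\rvec$ adds $|\rvec|$ to the depth of each side: when $|\rvec|$ is even the order is unchanged, so $s = \allterm{\rvec}{a}$ has depth $|\rvec| + 1$ (odd) and $s' = \allterm{\rvec}{b}$ has even depth; when $|\rvec|$ is odd the two sides swap, so $s = \allterm{\rvec}{b}$ has depth $|\rvec| + d(b)$ (odd, since $|\rvec|$ is odd and $d(b)$ even) and $s' = \allterm{\rvec}{a}$ has depth $|\rvec| + 1$ (even). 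In both cases $d(s)$ is odd and $d(s')$ is even.

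With this in hand the conclusion is immediate: in a $\Gamma_n$-sequence with $m\geq 3$, the middle term $t_2$ is the right-hand side of the step $(\all{t_1}{t_2})$, forcing $d(t_2)$ to be even, but it is also the left-hand side of the step $(\all{t_2}{t_3})$, forcing $d(t_2)$ to be odd --- a contradiction. Hence $m\leq 2$, and the two cases above finish the proof. I expect the only real obstacle to be isolating the correct invariant and verifying that it survives $\Anti$, in particular handling the order swap in the odd-length case correctly; everything else is bookkeeping. It is worth emphasizing that this argument uses the specific engineering of $\Gamma_n$ --- every axiom links a $b$-term of odd depth to an $a$-term of even depth --- which is precisely what prevents the axioms from chaining under $(\barbara)$.
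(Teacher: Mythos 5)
Your proposal is correct and follows essentially the same route as the paper: reduce to a $\Gamma_n$-sequence via Theorem~\ref{theorem-allcomplete} and Lemma~\ref{lemma-allproof}, then kill the case $m\geq 3$ by a parity contradiction on the number of verb formers in the middle term $t_2$. Your version of the invariant (left-hand sides of $\Anti(\rvec,\psi)$ always have odd depth, right-hand sides always even) is in fact a slight streamlining of the paper's, which reaches the same contradiction by first casing on whether $t_2$ contains the noun $a$ or the noun $b$.
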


\begin{proof}
By Theorem~\ref{theorem-allcomplete}, $\Gamma_n\proves_0 \all{u}{v}$, and by Lemma~\ref{lemma-allproof}, there is a $\Gamma_n$-sequence $u = t_1,\dots,t_m = v$ of length $m$. 
If $m =1$, then $u = v$ and we are done.  
If $m = 2$, then $(\all{u}{v}) = \Anti(\rvec,\psi)$ for some sequence $\rvec$ and some $\psi\in \Gamma_n$;
we are again done.

It remains to prove a contradiction from $m \geq 3$.   Suppose that we have  $\psi, \theta\in \Gamma_n$ and 
sequences of verbs $\rvec$ and $\svec$ such that $(\all{t_1}{t_2}) = \Anti(\rvec,\psi)$ and $(\all{t_2}{t_3}) = \Anti(\svec,\theta)$.
Notice that $t_2$ must contain either $a$ or $b$.  Assume that $t_2$ contains $a$. The argument when $t_2$ contains $b$ is similar. 
Let $m$ be the   the number of verbs in $t_2$, the second term of $\Anti(\rvec,\psi)$.
Since it is $a$ rather than $b$ which occurs in  the second term of $\Anti(\rvec,\psi)$, $m$ is even.
We see this by examining the sentences in $\Gamma_n$.
And let $n$ be the   the number of verbs in $t_2$, the first term of $\Anti(\svec,\theta)$.  
This time, we see that $n$ is odd.
But $m = n$, since  the second term of $\Anti(\rvec,\psi)$ \emph{is} the first term of $\Anti(\svec,\theta)$. 
   This is a contradiction.
\end{proof}

\paragraph{The sets $\Delta_{n,i}$} For all $n$ and all $1 \leq i \leq n$,
let the theory $\Delta_{n,i}$ be given by
\[\Delta_{n,i} = \left\{ \begin{array}{ll} (\Gamma_n\setminus \set{\phi_i})
    & \mbox{if $1 \leq i < n$} \\
 (\Gamma_n\setminus \omega) & \mbox{if $ i = n$}
 \end{array}
\right.
\]

\begin{lemma} Suppose that for some $1\leq i\leq n$ and some terms $u$ and $v$, \[\Delta_{n,i}\models \some{u}{v}.\]  Then $u = v = (\allterm{r_1}{(\allterm{r_1}{a})})$, so that $(\some{u}{v}) = \alpha$. 
\label{lemma-r1-r1}
\end{lemma}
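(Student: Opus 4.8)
Throughout write $w=(\allterm{r_1}{(\allterm{r_1}{a})})$, the term appearing in $\alpha$. Recall also that every term has the form $(\allterm{\svec}{c})$ for a (possibly empty) sequence of verbs $\svec$ and a noun $c\in\set{a,b}$, and that $w$ is $(\allterm{\svec}{a})$ with $\svec = r_1,r_1$.

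\textbf{Reduction.} Since $\Model\models\some{x}{y}$ means $\semantics{x}\cap\semantics{y}\neq\emptyset$, the sentence former is symmetric, so $\Delta_{n,i}\models\some{u}{v}$ iff $\Delta_{n,i}\models\some{v}{u}$. Hence if $(u,v)\neq(w,w)$ we may assume $u\neq w$, and it suffices to produce one model $\Model\models\Delta_{n,i}$ with $\semantics{u}=\emptyset$: such a model satisfies $\Delta_{n,i}$ but not $\some{u}{v}$, contradicting $\Delta_{n,i}\models\some{u}{v}$. So the whole lemma reduces to the claim that \emph{for every term $u\neq w$ there is a model of $\Delta_{n,i}$ in which $\semantics{u}=\emptyset$}. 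I will organize the construction by the length of $\svec$ when $u=(\allterm{\svec}{c})$.

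\textbf{The odd-length terms, all at once.} Let $\Model_0$ have any nonempty domain with $\semantics{a}$ and $\semantics{b}$ nonempty and $\semantics{r_j}=\emptyset$ for every $j$. A short induction on $\svec$ shows that $\semantics{(\allterm{\svec}{c})}$ is empty when $|\svec|$ is odd, and is the whole domain when $|\svec|$ is even and positive (and is $\semantics{c}\neq\emptyset$ when $\svec$ is empty). Thus every antecedent $(\allterm{r_j}{b})$ of $\phi_j$ and $\omega$ is empty, so those sentences hold vacuously, while $\semantics{w}$ is the whole domain, so $\alpha$ holds; hence $\Model_0\models\Gamma_n\supseteq\Delta_{n,i}$. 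Since $w$ has even length, $\Model_0$ already witnesses $\semantics{u}=\emptyset$ for every $u$ with an odd number of verbs.

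\textbf{The even-length terms.} It remains to empty an even-length $u\neq w$. Here I exploit that deleting $\phi_i$ (or $\omega$) breaks the king-promotion chain of Lemma~\ref{lemma-some-a-a}. The plan is to build a model with a king $\kappa$ that is an $r_j$-king for $j\le i$ and for no $j>i$: then $\kappa\in\semantics{w}$, securing $\alpha$; the surviving $\phi_j$ with $j<i$ hold because their antecedents land among the kings, and the sentences with $j>i$, together with $\omega$, hold because their antecedents $(\allterm{r_j}{b})$ are emptied. (When $\omega$ is the deleted sentence, i.e.\ $i=n$, the king may be promoted all the way to an $r_n$-king while $\semantics{a}$ stays empty.) Onto this spine I graft a witness $\eta$ showing that the immediate subterm $u'$ of $u=(\allterm{s_1}{u'})$ is nonempty, while arranging that nothing bears $s_1$ to $\eta$; this forces $\semantics{u}=\emptyset$. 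One then checks the three requirements $\semantics{w}\neq\emptyset$, $\Model\models\Delta_{n,i}$, and $\semantics{u}=\emptyset$. The base cases $u=a$ and $u=b$ are handled directly by setting the noun empty (using the full relation when $u=b$, so that the chain is still satisfied).

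\textbf{Where the difficulty lies.} The real work is the even-length case, and the obstacle is the tension among the three requirements. Because $w=(\allterm{r_1}{(\allterm{r_1}{a})})$ is self-similar, emptying terms near $w$ (such as $(\allterm{r_1}{a})$, or $a$ itself) tends to propagate nonemptiness back up through the anti-monotone behaviour underlying Lemma~\ref{lemma-Anti}, exactly to the terms one is trying to empty. The most delicate sub-case is an even-length $u$ whose leading verb is $r_1$: then $u=(\allterm{r_1}{u'})$ and $w=(\allterm{r_1}{(\allterm{r_1}{a})})$ compete for the same $r_1$-witnesses, and one must separate $\semantics{(\allterm{r_1}{a})}$ (whose king realizes $w$) from $\semantics{u'}$ so that a single king bears $r_1$ to all of the former but not to the latter. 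I expect the cleanest writeup to isolate a small parametrized family of these spine-plus-witness models and to verify, using the term-shape analysis above and the $\Gamma$-sequence description of $\proves_0$ in Lemma~\ref{lemma-allproof}, that every even-length $u\neq w$ is emptied by some member.
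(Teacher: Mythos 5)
Your reduction is exactly the one the paper uses: by symmetry of $(\some{u}{v})$ it suffices to exhibit, for each term $u\neq(\allterm{r_1}{(\allterm{r_1}{a})})$, a model of $\Delta_{n,i}$ in which $\semantics{u}=\emptyset$. Your treatment of the odd-length terms (singleton domain, empty verbs) is also correct and matches the paper's Case~1. But the even-length case --- which you yourself identify as ``the real work'' --- is not actually proved: you describe a ``spine-plus-witness'' plan and then write that you \emph{expect} a parametrized family of such models to work and that one would still need ``to verify \dots that every even-length $u\neq w$ is emptied by some member.'' That verification is precisely the content of the lemma, so as it stands the proposal has a genuine gap rather than a complete argument. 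The plan is also likely harder to execute than necessary: your king-promotion spine (an $r_j$-king for $j\le i$ only) is the right device for the single term $u=a$, but for a general even-length $u$ the competition you describe between $u$ and $w$ over $r_1$-witnesses, and the need to keep the surviving $\phi_j$ true on the spine while the grafted witness $\eta$ destroys $u$, is exactly where such a construction tends to break; asserting that a family of models exists does not discharge it. You also never address even-length terms ending in $b$ (other than $u=b$ itself).

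For comparison, the paper closes the even-length case by a different and simpler device. For even-length terms ending in $b$, a one-point model with $\semantics{b}=\emptyset$ and all verbs empty suffices. For $u=a$ it uses your king idea (a one-point model whose element is an $r_j$-king exactly for $j\le i$, which is why $\phi_i$ or $\omega$ must be deleted). For the remaining even-length terms $(\allterm{\svec}{a})$ with $\svec\neq(r_1,r_1)$, it builds a fixed four-element model with $\semantics{a}=\set{w}$ and $\semantics{b}$ equal to the whole domain and only two or three verb instances, arranged in one of three patterns according to whether $s_k\neq r_1$, or $s_k=r_1\neq s_{k-1}$, or $s_k=s_{k-1}=r_1$ with $k>2$. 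Because $\semantics{b}$ is everything and no element is related to everything, every $\semantics{\allterm{r_j}{b}}$ is empty, so all of $\phi_1,\dots,\phi_{n-1},\omega$ hold vacuously and one only has to check $\alpha$ and the emptiness of $\semantics{\allterm{\svec}{a}}$ by a short parity computation along $\svec$. In particular these models satisfy all of $\Gamma_n$, so the deletion of $\phi_i$ is needed only for the single term $a$ --- a point your plan misses by routing every even-length term through the broken chain. To repair your proposal you would need to either carry out your construction in full for each shape of $\svec$, or adopt the paper's trick of making the antecedents $(\allterm{r_j}{b})$ empty so that the chain sentences impose no constraints at all.
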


\begin{proof} It suffices to show that for every term $t\neq (\allterm{r_1}{(\allterm{r_1}{a})})$, there is a model of $\Delta_{n,i}$ in which $\semantics{t} = \emptyset$. Indeed, then $\Delta_{n,i}\not\models \some{u}{v}$ when $u = t$ or $v = t$.

We proceed by cases. In every case except $t = a$, we actually obtain a model of $\Gamma_n$ in which $\semantics{t} = \emptyset$. Since the models $\Model_4$ of $\Gamma_n$ constructed in Case 4 have $\Model_4\not\models \some{a}{(\allterm{r_1}{(\allterm{r_1}{a})})}$, this implies the additional result that if $\Gamma_n\models \some{u}{v}$, then $u = v = (\allterm{r_1}{(\allterm{r_1}{a})})$ or $u = v = a$.

\emph{Case 1:} $t = (\allterm{\svec}{b})$ or $t = (\allterm{\svec}{a})$, where the length of $\svec$ is odd. Let $M_1 = \set{*}$, and define the model $\Model_1$ with domain $M_1$ by $\semantics{a} = \semantics{b} = \set{*}$, and $\semantics{r_i} = \emptyset$ for all $i$. In $\Model_1$, we have 
\[
\semantics{\allterm{\svec}{a}} = \semantics{\allterm{\svec}{b}} = \begin{cases} 
\set{*} & \text{if the length of $\svec$ is even}\\
\emptyset & \text{if the length of $\svec$ is odd},
\end{cases}
\]
so $\Model_1\models \Gamma_n$.

\emph{Case 2:} $t = (\allterm{\svec}{b})$, where the length of $\svec$ is even. Define $\Model_2$ in the same way as $\Model_1$, but with $\semantics{b} = \emptyset$. This time, we have
\begin{align*}
\semantics{\allterm{\svec}{a}} &= \begin{cases} 
\set{*} & \text{if the length of $\svec$ is even}\\
\emptyset & \text{if the length of $\svec$ is odd}
\end{cases}\\
\semantics{\allterm{\svec}{b}} &= \begin{cases} 
\emptyset & \text{if the length of $\svec$ is even}\\
\set{*} & \text{if the length of $\svec$ is odd},
\end{cases}
\end{align*}
so again $\Model_2\models \Gamma_n$.

\emph{Case 3:} $t = a$. Fix $1\leq i\leq n$, let $M_3 = \set{*}$, and define the model $\Model_3(i)$ with domain $M_3$ by $\semantics{a} = \emptyset$, $\semantics{b} = \set{*}$, and 
\[
\semantics{r_j} =\begin{cases}\set{(*,*)} & \text{for }j\leq i\\
\emptyset & \text{for }j>i
\end{cases}
\]
In $\Model_3(i)$, we have 
\begin{align*}
\semantics{\allterm{r_j}{a}} = \semantics{b} &= \set{*}\\
\semantics{\allterm{r_j}{(\allterm{r_j}{a})}} = \semantics{\allterm{r_j}{b}} & = \begin{cases} 
\set{*} & \text{if }j\leq i\\
\emptyset & \text{if }j>i,
\end{cases}
\end{align*}
so $\Model_3(i)\models \Delta_{n,i}$.

\emph{Case 4:} $t = (\allterm{\svec}{a})$, where $\svec = (s_1,\dots,s_k)$, $k$ is even and nonzero, and $\svec\neq (r_1,r_1)$. Let $M_4 = \set{w,x,y,z}$, and define a model $\Model_4$ with domain $M_4$ by $\semantics{a} = \set{w}$ and $\semantics{b} = \set{w,x,y,z}$. To define the verb
interpretations $\semantics{r_i}$, we break into subcases.

\emph{Subcase 4a:} $s_k\neq r_1$. Set $x\semantics{r_1}w$, $y\semantics{r_1} x$, $z\semantics{s_k}w$, and no other instances of verbs:
\[
\xymatrix{
z \ar[r]^{s_k} & w & x\ar[l]_{r_1} & y\ar[l]_{r_1}
}\]

\emph{Subcase 4b:} $s_k = r_1$ and $s_{k-1}\neq r_1$. Set $x\semantics{r_1}w$, $y\semantics{r_1} x$, and no other instances of verbs.
\[
\xymatrix{
z & w & x\ar[l]_{r_1} & y\ar[l]_{r_1}
}\]

\emph{Subcase 4c:} $s_k = r_1$, $s_{k-1} = r_1$, and $k>2$. Set $x\semantics{r_1}w$, $y\semantics{r_1} x$, $z\semantics{s_{k-2}} y$, and no other instances of verbs.
\[
\xymatrix{
 w & x\ar[l]_{r_1} & y\ar[l]_{r_1} & z \ar[l]_{s_{k-2}}
}\]

In all three subcases, $\semantics{\allterm{r_1}{a}} = \set{x}$ and $\semantics{\allterm{r_1}{(\allterm{r_1}{a})}} = \set{y}$, so $\Model_4\models \alpha$. And for all $1\leq i\leq n$, $\semantics{\allterm{r_i}{b}} = \emptyset$, so $\Model_4 \models \Gamma_n$. It remains to show that $\semantics{\allterm{\svec}{a}} = \emptyset$. 
We introduce some notation: write $\svec_{\geq j}$ for the sequence $(s_j,\dots,s_k)$.

In subcase 4a, $\semantics{\allterm{s_k}{a}} = \set{z}$, and $\semantics{\allterm{s_{\geq k-1}}{a}} = \emptyset$. 
Since $k$ is even we have 
\[
\semantics{\allterm{\svec_{\geq j}}{a}} = 
\begin{cases} 
\emptyset & \text{if $j$ is odd} \\ 
M_4 & \text{if $j$ is even},
\end{cases}
\]
and $\semantics{\allterm{\svec}{a}} = \semantics{\allterm{\svec_{\geq 1}}{a}} = \emptyset$.

In subcase 4b, $\semantics{\allterm{s_k}{a}} = \set{x}$, 
and $\semantics{\allterm{s_{\geq k-1}}{a}} = \emptyset$. 
As in subcase 4a,  $\semantics{\allterm{\svec}{a}} = \emptyset$.

In subcase 4c, 
$\semantics{\allterm{\svec_{\geq k-2}}{a}} = \set{z}$.
So $\semantics{\allterm{\svec_{\geq k-3}}{a}} = \emptyset$.
And then since $k$ is even and $k\geq 4$,  $\semantics{\allterm{\svec_{\geq 1}}{a}} = \emptyset$
by the same argument which we have seen above.

This completes the proof.\end{proof}

\begin{lemma}\label{lem:modelsk}
For any natural number $k$, and any $n\geq k+1$, if $\Gamma_n\models_k \some{u}{v}$, then $u = v = (\allterm{r_1}{(\allterm{r_1}{a})})$, so that $(\some{u}{v}) = \alpha$.
\end{lemma}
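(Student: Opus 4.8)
The plan is to induct on the height of a tree $\TT$ witnessing $\Gamma_n\models_k \some{u}{v}$. In the base case $\TT$ is a single leaf, so $(\some{u}{v})\in\Gamma_n$; since $\alpha$ is the only existential sentence in $\Gamma_n$, we get $(\some{u}{v}) = \alpha$ at once. For the inductive step, the root $\some{u}{v}$ has children $\psi_1,\dots,\psi_j$ with $j\leq k$ and $\set{\psi_1,\dots,\psi_j}\models \some{u}{v}$, and the subtree rooted at each $\psi_l$ is itself a degree-$k$ witness of $\Gamma_n\models_k\psi_l$, of strictly smaller height. This is what lets the induction recurse on the existential children.

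Next I would classify the children. Each existential child $\psi_l$ satisfies $\Gamma_n\models_k\psi_l$ through a shorter tree, so by the induction hypothesis $\psi_l = \alpha$. Each universal child $\psi_l = (\all{x}{y})$ satisfies $\Gamma_n\models \all{x}{y}$ (since $\Gamma_n\models_k\psi$ implies $\Gamma_n\models\psi$), so by Lemma~\ref{lemma-Anti} it is either trivial, i.e.\ $x=y$ and hence valid, or of the form $\Anti(\rvec,\psi)$ for some sequence $\rvec$ and some \emph{universal} sentence $\psi\in\Gamma_n$, that is, some $\psi\in\set{\phi_1,\dots,\phi_{n-1},\omega}$. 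I would also note that at least one child must be existential: a set consisting only of universal sentences is satisfied by the empty model, in which no existential sentence holds, so it cannot entail $\some{u}{v}$. Hence at least one child equals $\alpha$, and the number of universal children is at most $j-1\leq k-1$.

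The combinatorial heart of the argument is a pigeonhole count. The trivial universal children are valid and may be discarded without affecting entailment. Each surviving universal child equals $\Anti(\rvec,\psi)$ for some source $\psi\in\set{\phi_1,\dots,\phi_{n-1},\omega}$; since there are at most $k-1$ of them, the set of sources that occur has size at most $k-1\leq n-2$, which is strictly smaller than the number $n$ of universal sentences in $\Gamma_n$. Hence some $\psi\in\set{\phi_1,\dots,\phi_{n-1},\omega}$ is the source of no child; let $i\in\set{1,\dots,n}$ be its index, so that $\psi=\phi_i$ if $i<n$ and $\psi=\omega$ if $i=n$. Now every surviving universal child is entailed by $\Delta_{n,i}$ (each equals $\Anti(\rvec,\psi')$ with $\psi'\in\Delta_{n,i}$, and $\set{\psi'}\proves_0\Anti(\rvec,\psi')$), the existential child $\alpha$ belongs to $\Delta_{n,i}$, and the discarded children are valid. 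Therefore every model of $\Delta_{n,i}$ satisfies all of $\psi_1,\dots,\psi_j$ and hence satisfies $\some{u}{v}$; that is, $\Delta_{n,i}\models\some{u}{v}$. By Lemma~\ref{lemma-r1-r1}, $(\some{u}{v}) = \alpha$, completing the induction.

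The main obstacle is organizing the induction so that the existential children are handled uniformly with the base case, together with the counting step: the hypothesis $n\geq k+1$ is exactly what forces strictly fewer universal sources than the $n$ universal sentences available in $\Gamma_n$, so that some $\Delta_{n,i}$ still entails all the surviving premises and Lemma~\ref{lemma-r1-r1} applies. A secondary point to get right is the claim that purely universal premises never entail an existential, which relies on the availability of the empty model (equivalently, a model in which every term denotes $\emptyset$).
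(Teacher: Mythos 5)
Your proof is correct and follows essentially the same route as the paper's: induct on the tree, use the induction hypothesis for existential children and Lemma~\ref{lemma-Anti} for universal ones, then pigeonhole the at most $k\leq n-1$ source sentences into some $\Delta_{n,i}$ and invoke Lemma~\ref{lemma-r1-r1}. Your extra observation that at least one child must be existential (via the empty model) is valid in this paper's semantics but is not needed, since bounding the number of sources by $k\leq n-1<n$ already suffices for the pigeonhole.
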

\begin{proof}
By induction on the depth of the tree witnessing $\Gamma_n\models_k \some{u}{v}$. In the base case, $(\some{u}{v})$ is a leaf in $\Gamma_n$. Since $\alpha$ is the only sentence of the form $(\some{u}{v})$ in $\Gamma_n$, we are done.

Now suppose the root of the tree is $(\some{u}{v})$ with children $\set{\psi_1, \dots, \psi_j}$, where $j\leq k$, $\Gamma_n\models_k \psi_i$ for all $1\leq i\leq j$, and $\set{\psi_1, \dots, \psi_j}\models \some{u}{v}$. 

We claim that for all $\psi_i$, there is a single sentence $\chi_i\in \Gamma_n$ such that $\set{\chi_i}\models \psi_i$. By induction, if $\psi_i$ has the form $(\some{x}{y})$, then $\psi_i = \alpha$, and we can take $\chi_i = \alpha$. On the other hand, if $\psi_i$ has the form $(\all{x}{y})$, then since $\Gamma_n\models \psi_i$, by Lemma~\ref{lemma-Anti} either $\psi_i = (\all{x}{x})$ and we can take $\chi_i$ to be any sentence in $\Gamma_n$, or $\psi_i = \Anti(\rvec_i,\chi_i)$ for some $\chi_i\in \Gamma_n$, and $\set{\chi_i}\models \psi_i$.

By the claim, $\set{\chi_k :k \leq j} \models \some{u}{v}$.  
And 
since $j\leq k \leq n-1$, there is some $1\leq i^*\leq n$ such that $\set{\chi_k :k \leq j}  \subseteq \Delta_{n,i^*}$.
For this $i^*$, we see that
$\Delta_{n,i^*}\models \psi_k$ for all $k\leq j$, so $\Delta_{n,i^*}\models \some{u}{v}$. Our result follows from Lemma~\ref{lemma-r1-r1}.
\end{proof}

\begin{proof}[Proof of Theorem~\ref{theorem-stronger}]
By Lemma~\ref{lemma-some-a-a}, $\Gamma_{n+1}\models \some{a}{a}$. And by Lemma~\ref{lem:modelsk}, $\Gamma_{n+1}\not\models_n \some{a}{a}$. 

Suppose that $\proves^*$ is a sound and complete syllogistic proof system for $\langtwo$. Let $n$ be the maximum number of premises in any rule in $\proves^*$. Then since $\Gamma_{n+1}\models \some{a}{a}$, we have $\Gamma_{n+1}\proves^* \some{a}{a}$ by completeness, and $\Gamma_{n+1}\models_n \some{a}{a}$ by soundness. This is a contradiction. 
\end{proof}

We have shown that the full semantic entailment relation $\models$ for $\langtwo$ is not 
$\models_n$ for any $n$.    This contrasts with logics like the one for $\langone$; for that,
$\models$ coincides with $\models_2$, since there is a sound and complete syllogistic proof system in which every rule has at most two premises.

\begin{remark}
Our work in this section used the fact that our set $\bR$ of verbs can be arbitrarily large.
It is open whether the negative result holds when $\bR$ is a fixed finite set.
\end{remark}

\subsection{Completeness using the (\casesrule) rule}\label{sec-cases}

We have just seen that $\langtwo$ has no logical system which is  syllogistic, sound, and complete. 
The rest of this section rectifies this, in three different ways.  

In this section, we add a single rule.  It is called (\casesrule), and while
it is not \emph{syllogistic} as we defined the term in Section~\ref{section-introduction-rules},
 it is simple and natural.
Here is a statement of it.
\begin{equation}
\label{eq-cases}
\infer[\casesrule]{\phi}{
\infer*{\phi}{\xcancel{\some{x}{x}}}
&
\infer*{\phi}{\xcancel{\all{y}{(\allterm{r}{x})}}}
}
\end{equation}
The non-syllogistic feature is that premises are withdrawn in derivations.\footnote{
The reader might wonder
why we are indicating withdrawal of premises using a large ``X'' rather than the standard notation of 
square brackets.  The reason is that later in the paper we use square brackets in our syntax, and we will thus
need a different notation later to indicate withdrawals.}  
Let us explain how the (\casesrule) rule is used.    
To prove $\phi$ from a set $\Gamma$, it is sufficient to 
take a term $x$, 
prove $\phi$ from $\Gamma\cup\set{\some{x}{x}}$,
and also prove $\phi$ from $\Gamma\cup \set{\all{y}{(\allterm{r}{x})}}$.  Here $y$ can be any term and $r$ can be any verb. 

In the logic itself, we take two derivations of $\phi$,
and then in one we withdraw a sentence $(\some{x}{x})$, while in the other we withdraw a sentence of the form $(\all{y}{(\allterm{r}{x})})$ (for this same term $x$).
We may withdraw zero occurrences or more than one.  The overall conclusion is $\phi$.

In this subsection, we write $\proves$ for provability in the $\proves_0$ system from Figure~\ref{sec-basesystem},
 together with (\casesrule). 

\begin{lemma} [Soundness]  If $\Gamma\proves\phi$, then $\Gamma\models\phi$.
\label{lemma-soundness-cases}
\end{lemma}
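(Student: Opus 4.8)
The plan is to prove soundness by induction on the structure of the proof tree witnessing $\Gamma \proves \phi$. Since $\proves$ is the system $\proves_0$ augmented with the single rule $(\casesrule)$, and since soundness of $\proves_0$ is already established (the rules in Figure~\ref{baserules} were noted to be sound), the only genuinely new work is to handle applications of $(\casesrule)$ in the inductive step. So I would set up the induction carefully to account for the withdrawal of hypotheses, which is the feature that takes this outside the purely syllogistic setting.

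The main subtlety is bookkeeping the withdrawn assumptions. A clean way to state the induction is: if there is a proof tree with root $\phi$ all of whose undischarged leaves lie in $\Gamma$, then $\Gamma \models \phi$. For the base case and for the ordinary rules of $\proves_0$, the undischarged leaves propagate in the obvious way and soundness follows from soundness of those individual rules. The interesting case is when the last rule applied is $(\casesrule)$, concluding $\phi$ from two subderivations: one deriving $\phi$ with some occurrences of $(\some{x}{x})$ withdrawn, and one deriving $\phi$ with some occurrences of $(\all{y}{(\allterm{r}{x})})$ withdrawn. By the induction hypothesis applied to the two subtrees, I obtain $\Gamma \cup \set{\some{x}{x}} \models \phi$ and $\Gamma \cup \set{\all{y}{(\allterm{r}{x})}} \models \phi$.

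It then remains to verify the semantic validity of the case split itself, namely that $\Gamma \cup \set{\some{x}{x}} \models \phi$ together with $\Gamma \cup \set{\all{y}{(\allterm{r}{x})}} \models \phi$ implies $\Gamma \models \phi$. For this, take any $\Model \models \Gamma$. The key observation is exactly the semantic dichotomy underlying the rule: for the term $x$ and any verb $r$, either $\semantics{x} \neq \emptyset$, in which case $\Model \models \some{x}{x}$; or $\semantics{x} = \emptyset$, in which case $\semantics{\allterm{r}{x}} = M$, so that $\semantics{y} \subseteq M = \semantics{\allterm{r}{x}}$ and hence $\Model \models \all{y}{(\allterm{r}{x})}$ for every term $y$. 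In the first case $\Model$ is a model of $\Gamma \cup \set{\some{x}{x}}$, and in the second of $\Gamma \cup \set{\all{y}{(\allterm{r}{x})}}$; either way $\Model \models \phi$. Since $\Model$ was an arbitrary model of $\Gamma$, we conclude $\Gamma \models \phi$.

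I expect the main obstacle to be purely expository rather than mathematical: formulating the inductive hypothesis so that the withdrawal of \emph{zero or more} occurrences of the designated sentence is handled uniformly, and making sure that leaves left undischarged in a subderivation (for instance, further copies of $(\some{x}{x})$ that were not withdrawn) are correctly tracked as belonging to the ambient theory. Once the statement about undischarged leaves is in place, the semantic dichotomy above does all the real work, and it is essentially the intuition already recorded in the text immediately following Definition~\ref{Gamma-sequence}'s companion discussion of determining existentials.
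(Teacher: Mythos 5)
Your proposal is correct and follows essentially the same route as the paper: isolate an application of $(\casesrule)$, apply the induction hypothesis to obtain $\Gamma\cup\set{\some{x}{x}}\models\phi$ and $\Gamma\cup\set{\all{y}{(\allterm{r}{x})}}\models\phi$, and conclude by the semantic dichotomy on whether $\semantics{x}$ is empty. The only difference is organizational --- the paper inducts on the number of uses of $(\casesrule)$ and assumes the last use is at the root, whereas you do a structural induction tracking undischarged leaves, which is if anything a slightly more careful way to handle the same bookkeeping.
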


\begin{proof}
By induction on the number $n$ of uses of (\casesrule) in derivations.   For $n = 0$, this is just soundness of $\proves_0$.
Assume our result for $n$, and let $\Gamma\proves\phi$ via a derivation with $n+1$ uses of (\casesrule).
We may assume that the last use of (\casesrule) 
is at the root of the proof tree.   So we have 
\begin{enumerate}[(1)]
\item $\Gamma\cup\set{\some{x}{x}}\proves \phi$ 
\item $\Gamma\cup\set{\all{y}{(\allterm{r}{x})}}\proves \phi$ 
\end{enumerate}
where both derivations have at most $n$ uses of $(\casesrule)$. By our induction hypotheses, (1) and (2) hold when $\proves$ is replaced by $\models$.
Let $\Model\models\Gamma$.   Then we have two cases.   If $\semantics{x} \neq \emptyset$, then $\Model\models \some{x}{x}$, so $\Model\models\phi$.
And when 
$\semantics{x} = \emptyset$, 
we have $\semantics{\allterm{r}{x}} = M$.   So $\Model\models \all{y}{(\allterm{r}{x})}$, and thus $\Model\models\phi$.
\end{proof}

\begin{example} Here is a sample derivation.
\[
 \set{\some{c}{d}, \all{a}{x}, \all{a}{y}, \all{(\allterm{r}{a})}{x}, \all{(\allterm{r}{a})}{y}}  \proves  \some{x}{y}.
 \]
Let $\Gamma$ be the theory on the left.    We show that 
\begin{enumerate}
\item $\Gamma\cup\set{\some{a}{a}}\proves \some{x}{y}$
\item
$\Gamma\cup\set{\all{c}{(\allterm{r}{a})}}\proves \some{x}{y}$
\end{enumerate}
The first is easy from $(\all{a}{x})$ and $(\all{a}{y})$. 
The second
comes from
\[
\infer[\darii]{\some{x}{y}}
{ 
\infer[\sometwo]{\some{x}{c}}{
\infer[\darii]{\some{c}{x}}
{
\infer[\someone]{\some{c}{c}}{\some{c}{d}}
&
\infer[\barbarashort]{\all{c}{x}}{ 
\all{c}{(\allterm{r}{a})}   & \all{(\allterm{r}{a})}{x} }
}}
&
\infer[\barbarashort]{\all{c}{y}}
{ \all{c}{(\allterm{r}{a})}   & {\all{(\allterm{r}{a})}{y}}}
}
\]
(Here ($\barbarashort$) abbreviates ($\barbara$).)
Note that the premise $(\all{c}{(\allterm{r}{a})})$ was used twice.
\label{example-1}
\end{example}

\begin{example}
We show that $\Gamma_n\proves \some{a}{a}$, where $\Gamma_n$ is the theory in Section~\ref{section-no-finite}. 

First, note that $\Delta\cup\set{\some{a}{a}}\proves \some{a}{a}$ for any theory $\Delta$. So by $n$ applications of $(\casesrule)$, to show that $\Gamma_n\proves \some{a}{a}$, it suffices to show that $\Gamma_n\cup \set{\all{b}{(\allterm{r_i}{a})}: 1\leq i\leq n}\proves \some{a}{a}$. Let $\Gamma_n^*$ be the theory on the left. 

Let $\psi_i = \all{(\allterm{r_i}{(\allterm{r_i}{a})})}{(\allterm{r_i}{b})}$. By $(\anti)$, $\Gamma_n^*\proves \psi_i$ for all $1\leq i\leq n$. Repeatedly applying $(\barbara)$ to the sequence $\psi_1,\phi_1,\psi_2,\phi_2,\dots, \psi_n,\omega$, we find that $\Gamma_n^*\proves \all{(\allterm{r_1}{(\allterm{r_1}{a})})}{a}$. So by $\alpha$, $(\darii)$, and $(\sometwo)$, $\Gamma_n^* \proves\some{a}{a}$. 
\end{example}

\begin{theorem} [Completeness] 
 If $\Gamma\models\phi$, then $\Gamma\proves\phi$.
 \label{theorem-completeness-cases}
\end{theorem}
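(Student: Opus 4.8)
The plan is to prove completeness of the system $\proves$ (which is $\proves_0$ together with (\casesrule)) by reducing to the partial completeness results already established for $\proves_0$. The key insight is that (\casesrule) lets us perform a case analysis on whether each term is ``inhabited'' or ``empty,'' and the hypothesis of \emph{determining existentials} from Theorem~\ref{theorem-detextcompleteness} is exactly what one obtains after making all such case decisions. So the strategy is to close off $\Gamma$ under enough existential decisions that the resulting theory determines existentials, apply the $\proves_0$-completeness results to each branch, and then reassemble the branches using (\casesrule).

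More concretely, I would first fix $\Gamma$ and $\phi$ with $\Gamma\models\phi$, and choose a finite subterm-closed set $\Terms$ containing all subterms of sentences in $\Gamma\cup\set{\phi}$. Enumerate the finitely many terms $x\in\Terms$ for which neither $\Gamma\proves_0\some{x}{x}$ nor the relevant universal sentences are derivable; for each such $x$, (\casesrule) allows us to split into two subproblems, one adding $(\some{x}{x})$ and one adding a sentence $(\all{y}{(\allterm{r}{x})})$. Iterating this over all ``undecided'' terms produces finitely many extended theories $\Gamma^*\supseteq\Gamma$, each of which \emph{determines existentials for $\Terms$}. I would then argue that it suffices to show $\Gamma^*\proves\phi$ for each such $\Gamma^*$, since repeated applications of (\casesrule) stitch these branch-proofs back into a single proof of $\Gamma\proves\phi$. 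The soundness direction (Lemma~\ref{lemma-soundness-cases}) guarantees each branch still entails $\phi$, namely $\Gamma^*\models\phi$.

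For a single branch $\Gamma^*$ that determines existentials for $\Terms$, completeness is essentially already done: if $\phi$ has the form $(\all{x}{y})$, then $\Gamma^*\models\phi$ gives $\Gamma^*\proves_0\phi$ by Theorem~\ref{theorem-allcomplete}; and if $\phi$ has the form $(\some{x}{y})$, then Theorem~\ref{theorem-detextcompleteness} (taking $\Gamma^*$ in the role of both $\Gamma$ and $\Gamma^*$, since $\Gamma^*$ determines existentials for $\Terms$ and contains the subterms of $\phi$) yields $\Gamma^*\proves_0\some{x}{y}$. In either case $\Gamma^*\proves\phi$, since $\proves_0$ is a subsystem of $\proves$.

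I expect the main obstacle to be setting up the case-splitting machinery carefully enough that the branches genuinely determine existentials while keeping all sentences within the fixed finite set $\Terms$, and verifying that the branching process terminates. The subtlety is that in the definition of determining existentials one needs, for each $x$, either $\Gamma^*\proves_0\some{x}{x}$ \emph{or} $\Gamma^*\proves_0\all{y}{(\allterm{r}{x})}$ for all $y$ and $r$ simultaneously; a single (\casesrule) application on $x$ adds only \emph{one} sentence $(\all{y}{(\allterm{r}{x})})$ on the empty-branch, so I must check that declaring $\semantics{x}=\emptyset$ for one witnessing instance propagates (via $\proves_0$) to the universal statements for all $y,r$ required by the definition — intuitively this holds because emptiness of $\semantics{x}$ forces $\semantics{\allterm{r}{x}}=M$, making every $(\all{y}{(\allterm{r}{x})})$ valid, but turning this semantic fact into the syntactic derivability demanded by the definition, uniformly across the finite term set, is where the real bookkeeping lies. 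I would handle this by choosing the withdrawn sentence on the empty-branch to be exactly an instance that (together with the monotonicity of $\proves_0$ and the rules $(\anti)$, $(\barbara)$) forces all the needed universals, and by ordering the term-splits so that each step strictly reduces the number of undecided terms in $\Terms$, guaranteeing termination after finitely many branches.
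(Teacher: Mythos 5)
Your overall strategy is viable but genuinely different from the paper's. The paper argues the contrapositive: assuming $\Gamma\not\proves\varphi$, it uses Zorn's Lemma to pass to a \emph{maximal} extension $\Gamma^*\supseteq\Gamma$ with $\Gamma^*\not\proves\varphi$, observes that maximality together with $(\casesrule)$ forces $\Gamma^*$ to determine existentials for $\Terms$ (if some triple were undecided, both extensions would prove $\varphi$ and $(\casesrule)$ would yield $\Gamma^*\proves\varphi$), and then concludes $\Gamma\not\models\varphi$ from the contrapositive of Theorem~\ref{theorem-detextcompleteness}. You instead build an explicit finite tree of case splits and stitch the branches together with $(\casesrule)$. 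Your route is more constructive and makes visible how an actual proof tree is assembled, at the cost of the bookkeeping you describe; the paper's route trades that bookkeeping for a nonconstructive maximality argument that also handles an infinite verb set without further comment.

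There is, however, a real flaw in the way you propose to discharge the ``subtlety'' you identify. You suggest choosing the withdrawn sentence on the empty branch to be ``an instance that forces all the needed universals'' via $\proves_0$. No such instance exists: adding a single sentence $(\all{y_0}{(\allterm{r_0}{x})})$ does not entail $(\all{y_1}{(\allterm{r_1}{x})})$ for an unrelated $y_1$ or a different verb $r_1$ --- semantically, the added sentence is strictly weaker than $\semantics{x}=\emptyset$, and one can build a model satisfying it in which $\semantics{x}\neq\emptyset$ and the other universal fails. The rules $(\anti)$ and $(\barbara)$ cannot bridge distinct verbs. So the propagation you hope for simply does not happen.

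The good news is that the worry is unnecessary once you read the definition of ``determines existentials'' as it is stated: it is a \emph{per-triple} disjunction, requiring for each $(x,y,r)$ that either $\Gamma\proves_0\some{x}{x}$ or $\Gamma\proves_0\all{y}{(\allterm{r}{x})}$ --- not that the second disjunct hold uniformly in $y$ and $r$ whenever the first fails. Hence the correct repair is to iterate $(\casesrule)$ over the finitely many undecided triples $(x,y,r)$ with $x,y\in\Terms$ and $r$ a verb occurring in $\Terms$ (only these matter for Lemma~\ref{lemma-determines-existentials-Truth-Lemma} and Theorem~\ref{theorem-detextcompleteness}); each application decides at least one new triple on both branches, monotonicity of $\proves_0$ keeps decided triples decided, and the process terminates. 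With that replacement, and with the trivial observation that $\Gamma^*\models\varphi$ on each branch because $\Gamma^*\supseteq\Gamma$ (soundness of $(\casesrule)$ is not what is needed here), your argument goes through.
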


\begin{proof}
By Theorem~\ref{theorem-allcomplete}, if $\Gamma\models \all{x}{y}$, then already $\Gamma\proves_0 \all{x}{y}$. So we may assume that $\varphi$ has the form $(\some{x}{y})$.  We prove the contrapositive, so assume $\Gamma\not\proves \varphi$. Let $\Terms$ be a set of terms, closed under subterms, which contains $x$, $y$, and all subterms of sentences in $\Gamma$. 
By Zorn's Lemma\footnote{Of course, we do not actually need the Axiom of Choice when the set of all sentences in the language is countable.}, let $\Gamma^*\supseteq \Gamma$ be a maximal extension, such that $\Gamma^*\not\proves \varphi$. The sentences in $\Gamma^*$ may contain any terms in the language.

Assume for contradiction that $\Gamma^*$ does not determine existentials for $\Terms$. Then there are terms $x,y\in \Terms$ and a verb $r\in \bR$ such that $\Gamma^*\not\proves_0\some{x}{x}$ and $\Gamma^*\not\proves_0 \all{y}{(\allterm{r}{x})}$. In particular, $\Gamma^*$ does not contain either of these sentences. By maximality, we have $\Gamma^*\cup \set{\some{x}{x}}\proves \varphi$ and $\Gamma^* \cup \set{\all{y}{(\allterm{r}{x})}} \proves \varphi$. By $(\casesrule)$, $\Gamma^*\proves \varphi$, contradiction.   Thus $\Gamma^*$ determines existentials for $\Terms$.

Now since $\Gamma^*\not\vdash \varphi$, we clearly have $\Gamma^*\not\vdash_0\varphi$.
So by Theorem~\ref{theorem-detextcompleteness}, $\Gamma\not\models \varphi$.
This is what was to be shown. 
\end{proof}

The proofs of Theorem~\ref{theorem-allcomplete} and Theorem~\ref{theorem-detextcompleteness} show that if $\Gamma\not\proves \varphi$, then either $\Model(\Gamma,\Terms)$ or $\Model'(\Gamma^*,\Terms)$ are countermodels, depending on the form of $\varphi$. Both of these models have size $O(n^2)$, where $n$ is the complexity of $\Gamma\cup \set{\varphi}$.

\begin{remark}
Since $\proves_0$ is already complete for sentences of the form $(\all{a}{b})$, we only need to use (\casesrule) in proofs of sentences of the form $(\some{a}{b})$. Using Lemma~\ref{lemma-someproof}, it is possible to show that (\casesrule) is equivalent over $\proves_0$ to the following rule: 
\[
\infer[\casesrule^*]{\some{a}{b}}{
\all{x}{a} & \all{x}{b} &
\infer*{\some{a}{b}}{\xcancel{\all{y}{(\allterm{r}{x})}}}}
\]

So the system with rules (\axiom), (\barbara), (\anti), (\someone), (\sometwo), (\darii), and ($\casesrule^*$) is also sound and complete for $\langtwo$. We chose to emphasize (\casesrule) rather than $(\casesrule^*)$ because it made the completeness proof quicker, and because we will use (\casesrule) again in Section~\ref{section-completeness-langthreesecond}. 
\end{remark}

\subsection{Completeness using the (\chains) schema}
\label{section-chains}

In Theorem~\ref{theorem-stronger}, 
we proved that there are no syllogistic proof systems for $\langtwo$ which are  sound
and complete.
We have just seen that $\langtwo$ has a sound and complete proof system, but one which is not syllogistic in our sense.
In this section, we give another proof system, which this time  makes use  of a schema of rules with arbitrarily long (but finite) premise sets.

\begin{definition} Let $a$ and $b$ be nouns.
A \emph{chain linking $a$ to $b$} is a sequence $C$ of sentences
\[
C = (\all{a}{u_1}, \all{v_1}{u_2},\dots,\all{v_i}{u_{i+1}},\dots,\all{v_m}{b}),
 \]
 such that for all $1\leq i \leq m$, either
\begin{enumerate}
\item $u_i = (\allterm{\rvec}{z_i})$ and $v_{i} = (\allterm{\rvec}{(\allterm{r}{t_i})})$, where $\rvec$ is a sequence of even length, or  
\item $u_i = (\allterm{\rvec}{(\allterm{r}{t_i})})$ and $v_{i} = (\allterm{\rvec}{z_i})$, where $\rvec$ is a sequence of odd length.
\end{enumerate}
We say that this chain has length $(m+1)$, and the terms $t_1,\dots,t_m$ are the \emph{missing link terms} in $C$.
\label{def-T-chain}
\end{definition}

Note that a chain of length $1$ linking $a$ to $b$ is just the single sentence $(\all{a}{b})$ and has no missing link terms. 
Here are two chains of length $2$ linking $a$ to $b$: 
\begin{gather*}
(\all{a}{z}, \all{(\allterm{r}{t})}{b})\\
(\all{a}{(\allterm{s}{(\allterm{r}{t})})}, \all{(\allterm{s}{z})}{b})
\end{gather*}
In both of these chains, $t$ is the missing link term.

Returning to the definition, we emphasize that 
the terms denoted $z_1,\dots,z_m$  may be arbitrary (they need not be nouns) and are not missing link terms.
The sequence $\rvec$ is also arbitrary.

\begin{lemma} Suppose $C = (\all{a}{u_1}, \all{v_1}{u_2},\dots,\all{v_m}{b})$ is a chain linking $a$ to $b$, $\Model\models C$, and $\semantics{t} = \emptyset$ for every missing link term $t$ in $C$. Then $\semantics{a}\subseteq \semantics{b}$.
\label{lemma-one-chain}
\end{lemma}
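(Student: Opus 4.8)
The plan is to collapse the entire chain into a single increasing sequence of set inclusions. Reading off the sentences of $C$ and using $\Model \models C$, I immediately get
\[
\semantics{a} \subseteq \semantics{u_1}, \quad \semantics{v_1} \subseteq \semantics{u_2}, \quad \dots, \quad \semantics{v_i} \subseteq \semantics{u_{i+1}}, \quad \dots, \quad \semantics{v_m} \subseteq \semantics{b}.
\]
To splice these into one chain $\semantics{a} \subseteq \semantics{u_1} \subseteq \semantics{v_1} \subseteq \semantics{u_2} \subseteq \cdots \subseteq \semantics{v_m} \subseteq \semantics{b}$, the one extra thing I need is the \emph{missing link} inclusion $\semantics{u_i} \subseteq \semantics{v_i}$ for each $1 \le i \le m$. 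The lemma then follows at once by transitivity of $\subseteq$.

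First I would record the basic antimonotonicity of the term former $\allterm{r}{-}$: directly from the semantics, if $\semantics{x} \subseteq \semantics{y}$ then $\semantics{\allterm{r}{y}} \subseteq \semantics{\allterm{r}{x}}$, since enlarging $\semantics{x}$ only strengthens the universal condition defining the interpretation. Iterating this along a sequence $\rvec$ by induction on its length, I get that the operation $x \mapsto \semantics{\allterm{\rvec}{x}}$ is monotone when $\rvec$ has even length and antitone when $\rvec$ has odd length, being a composition of $k$ order-reversing maps.

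Next I would establish $\semantics{u_i} \subseteq \semantics{v_i}$ using the hypothesis that every missing link term has empty interpretation. Since $\semantics{t_i} = \emptyset$, the semantics gives $\semantics{\allterm{r}{t_i}} = M$, so the inclusion $\semantics{z_i} \subseteq \semantics{\allterm{r}{t_i}}$ holds trivially. Now I split into the two cases of Definition~\ref{def-T-chain}. In Case 1, $u_i = \allterm{\rvec}{z_i}$ and $v_i = \allterm{\rvec}{(\allterm{r}{t_i})}$ with $\rvec$ of even length, so $\allterm{\rvec}{-}$ is monotone and preserves $\semantics{z_i} \subseteq \semantics{\allterm{r}{t_i}}$, yielding $\semantics{u_i} \subseteq \semantics{v_i}$. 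In Case 2, $u_i = \allterm{\rvec}{(\allterm{r}{t_i})}$ and $v_i = \allterm{\rvec}{z_i}$ with $\rvec$ of odd length, so $\allterm{\rvec}{-}$ is antitone and reverses the same inclusion, again yielding $\semantics{u_i} \subseteq \semantics{v_i}$.

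The only delicate point is the parity bookkeeping: in each case the parity of $\rvec$ is exactly what is needed for the monotone/antitone behavior of $\allterm{\rvec}{-}$ to convert $\semantics{z_i} \subseteq \semantics{\allterm{r}{t_i}}$ into $\semantics{u_i} \subseteq \semantics{v_i}$, rather than the reverse inclusion. This is precisely where the even/odd condition built into the definition of a chain earns its keep, and I would verify both cases explicitly. Beyond this sign-tracking there is no real obstacle; once the missing link inclusions are in hand, stringing together the chain of inclusions and concluding $\semantics{a} \subseteq \semantics{b}$ is immediate.
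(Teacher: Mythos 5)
Your proposal is correct and follows essentially the same route as the paper: reduce to showing $\semantics{u_i}\subseteq\semantics{v_i}$, get $\semantics{z_i}\subseteq\semantics{\allterm{r}{t_i}}=M$ from emptiness of the missing link term, and propagate this through $\allterm{\rvec}{-}$ by induction on the length of $\rvec$ using antimonotonicity of $\allterm{r}{-}$. The paper compresses the parity bookkeeping into a single induction, but the argument is identical in substance.
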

\begin{proof}
Since $\Model$ satisfies all the sentences in $C$, $\semantics{a}\subseteq \semantics{u_1}$, $\semantics{v_m}\subseteq \semantics{b}$, and  $\semantics{v_i}\subseteq \semantics{u_{i+1}}$ for all $i$. So it suffices to show that $\semantics{u_i}\subseteq \semantics{v_i}$ for all $i$. 

Let $t_i$ be the missing link term for $u_i$ and $v_i$. Since $\semantics{t_i} = \emptyset$, $\semantics{\allterm{r}{t_i}} = \Model$.
 So $\semantics{z_i}\subseteq \semantics{\allterm{r}{t_i}}$ for any term $z_i$. This is the desired 
inclusion when  $\rvec$ is the empty sequence. The result then follows by induction on the length of $\rvec$, using the fact that if $\semantics{x}\subseteq \semantics{y}$, then $\semantics{\allterm{r}{y}}\subseteq \semantics{\allterm{r}{x}}$.
\end{proof}

\begin{definition}
Let $x$ and $y$ be terms. An \emph{$(x,y)$ chain system} is a sequence of chains $C_1,\dots,C_l$ such that 
for every missing link term $t$ in every chain $C_n$, there exist $m,m'<n$ such that $C_m$ links $t$ to $x$ and $C_{m'}$ links $t$ to $y$. When $x = y$, we may take $m = m'$. 
\end{definition}

If chains $C_m$ and $C_{m'}$ link $t$ to $x$ and $y$, respectively, we can think of these chains as witnessing that we are allowed to use $t$ as a missing link term later in the $(x,y)$ chain system. Of course, the first chain in an $(x,y)$ chain system must have length $1$, since there are no available missing link terms from previous chains.

\begin{lemma} Let $C_1,\dots,C_l$ be an $(x,y)$ chain system. Suppose $\Model\models C_i$ for all $i$, and suppose that $\semantics{x}\cap \semantics{y} = \emptyset$ in $\Model$. Then $\semantics{a}\subseteq \semantics {b}$ whenever some $C_i$ links $a$ to $b$.
\label{l33}
\end{lemma}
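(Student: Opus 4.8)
The plan is to prove the statement by strong induction on the index $n$ of the chains in the system, establishing for each $n$ that whenever $C_n$ links some $a$ to some $b$, we have $\semantics{a}\subseteq\semantics{b}$. The engine of the argument is Lemma~\ref{lemma-one-chain}: to control a single chain it suffices to know that all of its missing link terms are interpreted as the empty set. So the whole task reduces to showing, for each chain in turn, that its missing link terms have empty interpretation in $\Model$.

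For the base case, recall the observation following the definition of a chain system: the first chain $C_1$ must have length $1$, since there are no missing link terms available from previous chains. Thus $C_1$ is a single sentence $(\all{a}{b})$ with no missing link terms, and Lemma~\ref{lemma-one-chain} (applied with an empty set of missing link terms) immediately gives $\semantics{a}\subseteq\semantics{b}$.

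For the inductive step, suppose the claim holds for all chains $C_m$ with $m<n$, and consider a chain $C_n$ linking $a$ to $b$. Let $t$ be any missing link term in $C_n$. By the definition of an $(x,y)$ chain system, there are indices $m,m'<n$ such that $C_m$ links $t$ to $x$ and $C_{m'}$ links $t$ to $y$. Applying the induction hypothesis to these two earlier chains yields $\semantics{t}\subseteq\semantics{x}$ and $\semantics{t}\subseteq\semantics{y}$, whence $\semantics{t}\subseteq\semantics{x}\cap\semantics{y}=\emptyset$ by the disjointness hypothesis. (When $x=y$ and we take $m=m'$, the same conclusion follows from the single inclusion $\semantics{t}\subseteq\semantics{x}=\semantics{x}\cap\semantics{y}=\emptyset$.) Since $t$ was an arbitrary missing link term of $C_n$, every missing link term of $C_n$ is interpreted as $\emptyset$, and Lemma~\ref{lemma-one-chain} applied to $C_n$ gives $\semantics{a}\subseteq\semantics{b}$, completing the induction.

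I do not anticipate a genuine obstacle here: the argument is essentially bookkeeping built on Lemma~\ref{lemma-one-chain}. The one point requiring care is the alignment of the two conditions baked into the definition of a chain system, namely that the witnessing chains $C_m,C_{m'}$ have indices strictly below $n$ (so that the induction hypothesis applies to them) and that they link the missing link term $t$ to \emph{both} $x$ and $y$ (so that disjointness of $\semantics{x}$ and $\semantics{y}$ forces $\semantics{t}=\emptyset$). Setting the induction up as a strong induction over all earlier chains, rather than a single-step induction, is exactly what makes these witnesses available and is the key to the whole argument.
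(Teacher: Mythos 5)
Your proof is correct and follows essentially the same route as the paper: induction over the chains in the system, using the definition of a chain system to find earlier chains linking each missing link term $t$ to both $x$ and $y$, concluding $\semantics{t}\subseteq\semantics{x}\cap\semantics{y}=\emptyset$, and then invoking Lemma~\ref{lemma-one-chain}. The only cosmetic difference is that the paper starts the induction at the empty system (vacuous base case) rather than at $C_1$, but this changes nothing of substance.
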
 
\begin{proof}
By induction on $l$. When $l = 0$, there are no chains, so the conclusion is vacuously satisfied. Now suppose $C_1,\dots,C_{l+1}$ is an $(x,y)$ chain system. By induction, the conclusion holds for the $(x,y)$ chain system $C_1,\dots,C_l$. So suppose $C_{l+1}$ links $a$ to $b$. For any missing link term $t$ in $C_{l+1}$, there are chains $C_i$ and $C_j$ with $i,j\leq l$, linking $t$ to $x$ and $t$ to $y$. So $\semantics{t}\subseteq \semantics{x}\cap \semantics{y} = \emptyset$. By Lemma~\ref{lemma-one-chain}, $\semantics{a}\subseteq \semantics{b}$, as desired.
\end{proof}

We introduce the new rule schema
\[
\infer[\mbox{\chains}]{\some{x}{y}}{
\some{a}{b} & C_1 & \dots & C_l}
\]
where $C_1,\dots,C_l$ is an $(x,y)$ chain system, some $C_m$ links $a$ to $x$, and some $C_n$ links $b$ to $y$.

In this section, we write $\proves$ for the proof system $\proves_0$ augmented by the rule schema (\chains).

\begin{theorem} The $(\chains)$ schema is sound.
\end{theorem}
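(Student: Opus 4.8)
The plan is to prove soundness of each instance of the $(\chains)$ schema directly, with the real work having already been carried out in Lemma~\ref{l33}. I would fix an arbitrary instance of the rule, so that I have a sentence $(\some{a}{b})$, an $(x,y)$ chain system $C_1,\dots,C_l$, an index $m$ with $C_m$ linking $a$ to $x$, and an index $n$ with $C_n$ linking $b$ to $y$. Let $\Model$ be any model satisfying all the premises, that is, $\Model\models\some{a}{b}$ and $\Model\models C_i$ for every $i$. The goal is to show $\Model\models\some{x}{y}$, equivalently $\semantics{x}\cap\semantics{y}\neq\emptyset$.

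I would argue by contradiction. Suppose instead that $\semantics{x}\cap\semantics{y}=\emptyset$. Then the hypotheses of Lemma~\ref{l33} are exactly met: $C_1,\dots,C_l$ is an $(x,y)$ chain system, $\Model\models C_i$ for all $i$, and the two interpretations are disjoint. Hence the lemma yields $\semantics{c}\subseteq\semantics{d}$ whenever some chain $C_i$ links $c$ to $d$. Applying this to $C_m$ and to $C_n$ gives the two inclusions $\semantics{a}\subseteq\semantics{x}$ and $\semantics{b}\subseteq\semantics{y}$.

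To finish, I would invoke the remaining premise $(\some{a}{b})$: since $\semantics{a}\cap\semantics{b}\neq\emptyset$, I fix an element in this intersection. By the two inclusions just obtained, this element lies both in $\semantics{x}$ and in $\semantics{y}$, so $\semantics{x}\cap\semantics{y}\neq\emptyset$, contradicting the assumption. Therefore $\Model\models\some{x}{y}$, as required. I do not anticipate any genuine obstacle here: all the combinatorial content---how an empty missing link term forces the inclusion across a single chain (Lemma~\ref{lemma-one-chain}), and how a chain system propagates these inclusions under the disjointness assumption (Lemma~\ref{l33})---has already been isolated, so the present statement is just the short bridge from those inclusions, through the existential premise $(\some{a}{b})$, to the existential conclusion $(\some{x}{y})$.
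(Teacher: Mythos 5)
Your proposal is correct and follows essentially the same route as the paper's proof: assume $\semantics{x}\cap\semantics{y}=\emptyset$ for contradiction, invoke Lemma~\ref{l33} to get $\semantics{a}\subseteq\semantics{x}$ and $\semantics{b}\subseteq\semantics{y}$, and derive a contradiction with the premise $(\some{a}{b})$. No issues.
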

\begin{proof} 
Let $(\some{a}{b}), C_1,\dots,C_l$ be the premises of an instance of (\chains), and suppose   that $\Model$ satisfies these premises.
Suppose towards a contradiction that $\semantics{x} \cap\semantics{y} = \emptyset$, so Lemma~\ref{l33} applies.
 Since some $C_m$ links $a$ to $x$ and some $C_n$ links $b$ to $y$, we 
use Lemma~\ref{l33} to see that $\semantics{a}\cap \semantics{b}\subseteq \semantics{x}\cap\semantics{y} = \emptyset$.
But this contradicts the assumption that $\Model\models \some{a}{b}$.
\end{proof}

\begin{example}
We show that $\Gamma_n\proves \some{a}{a}$, where $\Gamma_n$ is the theory in Section~\ref{section-no-finite}.  We will find an $(a,a)$ chain system which contains a chain linking $(\allterm{r_1}{(\allterm{r_1}{a})})$ to $a$. 

$C_1 = (\all{a}{a})$  is a chain of length $1$ linking $a$ to $a$. This allows $a$ to be used as a missing link term in $C_2$. Let $\beta$ be the sentence
\[
\all{(\allterm{r_1}{(\allterm{r_1}{a})})}{(\allterm{r_1}{(\allterm{r_1}{a})})}.
\]
Then 
\[
C_2 = (\beta, \phi_1, \ldots, \phi_{n-1}, \omega)
\]
is a chain linking $(\allterm{r_1}{(\allterm{r_1}{a})})$ to $a$, in which the only missing link term is $a$.

Let's check that $C_2$ is a chain. First, $u_1  = (\allterm{r_1}{(\allterm{r_1}{a})})$ and  $v_1 =  (\allterm{r_1}{c})$, so this is alternative $2$ in Definition~\ref{def-T-chain}, with 
$t = a$, $z = c$, $r = r_1$, and $\rvec = r_1$. All of the rest of the links from $u_i$ to $v_i$ are justified in the same way.

Then we have a derivation:
\[
\infer[\chains]{\some{a}{a}}{\alpha & C_1 & C_2}
\]
This shows that $\Gamma_n\proves \some{a}{a}$,
because $\alpha\in \Gamma_n$, and each sentence in $C_1, C_2$ is either $\beta$, which is an instance of $(\axiom)$,
or an element of $\Gamma_n$.
\label{example-gamma-n-chains}
\end{example}

\begin{theorem}[Completeness]\label{theorem-chains}
If $\Gamma\models\phi$, then $\Gamma\proves \phi$.
\end{theorem}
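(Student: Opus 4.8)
The plan is to follow the same high-level route as the completeness proof for $(\casesrule)$ (Theorem~\ref{theorem-completeness-cases}), reducing everything to the ``determines existentials'' machinery of Theorem~\ref{theorem-detextcompleteness}. Soundness of $(\chains)$ was already established, so only the implication $\Gamma\models\phi\Rightarrow\Gamma\proves\phi$ remains. Since $\proves_0$ is already complete for universal conclusions (Theorem~\ref{theorem-allcomplete}), I may assume $\phi$ has the form $(\some{x}{y})$ and prove the contrapositive: assuming $\Gamma\not\proves\some{x}{y}$, I will produce a model of $\Gamma$ in which $\semantics{x}\cap\semantics{y}=\emptyset$. Fix a subterm-closed set $\Terms$ containing $x$, $y$, and all subterms of sentences in $\Gamma$. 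The goal is to exhibit a theory $\Gamma^*\supseteq\Gamma$ which determines existentials for $\Terms$ and satisfies $\Gamma^*\not\proves_0\some{x}{y}$; then Theorem~\ref{theorem-detextcompleteness} (in contrapositive form, via the model $\Model'(\Gamma^*,\Terms)$) yields $\Gamma\not\models\some{x}{y}$.

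The engine of the construction is a set $E\subseteq\Terms$ of terms that are ``forced empty once $\semantics{x}\cap\semantics{y}=\emptyset$ is assumed.'' I would define $E$ as the least set of terms closed under the rule: $t\in E$ whenever there is a chain (built from sentences $\psi$ with $\Gamma\proves_0\psi$) linking $t$ to $x$ and another such chain linking $t$ to $y$, all of whose missing link terms (in the sense of Definition~\ref{def-T-chain}) already lie in $E$. This least-fixed-point definition mirrors exactly the ordering condition in the definition of an $(x,y)$ chain system, so that $E$ is precisely the set of terms that may legitimately appear as missing links. I would then take $\Gamma^*$ to be $\Gamma$ together with the emptiness sentences $\all{s}{(\allterm{r}{t})}$ for every $t\in E$, $s\in\Terms$, $r\in\bR$, together with a carefully chosen family of existential assertions $\some{t}{t}$ for terms $t\notin E$, picked so that $\Gamma^*$ determines existentials for $\Terms$ while no added existential is simultaneously $\leq x$ and $\leq y$. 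For terms in $E$ the ``empty'' alternative of the determines-existentials dichotomy holds outright; for the remaining terms the added $\some{t}{t}$ supplies the other alternative.

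The heart of the argument is the verification that $\Gamma^*\not\proves_0\some{x}{y}$, and this is where the two proof-theoretic lemmas about $\proves_0$ do the work. By Lemma~\ref{lemma-someproof}, any $\proves_0$-proof of $\some{x}{y}$ from $\Gamma^*$ must rest on an existential premise $(\some{a}{b})\in\Gamma^*$ together with $\proves_0$-derivations of $(\all{a}{x})$ and $(\all{b}{y})$ (up to swapping $a,b$); by Lemma~\ref{lemma-allproof} each such universal derivation is a $\Gamma^*$-sequence, i.e.\ a chain whose steps are instances of $\Anti(\rvec,\psi)$ for sentences $\psi\in\Gamma^*$. I would argue that every step using one of the added emptiness sentences $\all{s}{(\allterm{r}{t})}$ corresponds to a missing link at the term $t\in E$, and then replace it, using the fixed-point definition of $E$, by the chains that witness $t\in E$. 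Splicing these witnessing chains together in the order dictated by the well-founded generation of $E$ produces a genuine $(x,y)$ chain system containing a chain linking $a$ to $x$ and a chain linking $b$ to $y$; if the anchoring existential $(\some{a}{b})$ lies in $\Gamma$ (rather than being one of the added $\some{t}{t}$, which by our choice cannot connect to both $x$ and $y$), an application of $(\chains)$ then gives $\Gamma\proves\some{x}{y}$, the desired contradiction.

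I expect the main obstacle to be precisely this translation between $\proves_0$-proofs over $\Gamma^*$ and $(\chains)$-derivations over $\Gamma$: one must show that the emptiness-jumps appearing in a $\Gamma^*$-sequence can always be ``paid for'' by the witnessing chains in a way that respects the ordering constraint of an $(x,y)$ chain system, and that the parity bookkeeping in $\Anti(\rvec,\psi)$ (the two cases of Definition~\ref{def-T-chain}) matches up correctly when chains are spliced. The well-foundedness of the least-fixed-point definition of $E$ is what guarantees that the unfolding terminates and assembles a finite chain system. A secondary, more delicate point is coordinating the added existential assertions with $x$ and $y$ so that $\Gamma^*$ determines existentials without itself $\proves_0$-proving $\some{x}{y}$; this is exactly the issue that the $(\casesrule)$ proof sidestepped by using proof-by-cases on a maximal consistent extension, a device not available here.
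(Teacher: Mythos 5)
Your proposal is correct in outline and follows essentially the same route as the paper's proof: your least-fixed-point set $E$ is exactly the set of terms $t$ with $\Delta_t\subseteq\Gamma_\omega$ in the paper's staged construction $\Gamma_0\subseteq\Gamma_1\subseteq\cdots$, your $\Gamma^*$ coincides with the paper's, and the translation of $\proves_0$-derivations over the augmented theory into $(x,y)$ chain systems via Lemmas~\ref{lemma-allproof} and~\ref{lemma-someproof} is precisely the content of Claims~\ref{claim-chains} and~\ref{claim-hard-chains}. The only differences are presentational: you organize the whole argument contrapositively and define the fixed point directly, whereas the paper argues in the forward direction with an $\omega$-indexed sequence of theories and places the contrapositive step inside Claim~\ref{claim-hard-chains}.
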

\begin{proof}
By Theorem~\ref{theorem-allcomplete}, if $\Gamma\models \all{x}{y}$, then already $\Gamma\proves_0 \all{x}{y}$. So we may assume that $\varphi$ has the form $(\some{x}{y})$. Let $\Terms$ be a set of terms, closed under subterms, which contains $x$, $y$, and all subterms of sentences in $\Gamma$.

For any term $t\in \Terms$, let $\Delta_t = \set{\all{z}{(\allterm{r}{t})} :  z \in \Terms, r \in \bR}$. In order to extend $\Gamma$ to a theory which determines existentials for $\Terms$, we define an increasing sequence of theories by induction. Set $\Gamma_0 = \Gamma$, and given $\Gamma_n$, define 
\[
\Gamma_{n+1} = \Gamma_n\cup \set{\all{z}{(\allterm{r}{t})} : t,z\in \Terms, r\in \bR, \text{and }\Gamma_n\cup \set{\some{t}{t}}\proves_0 \varphi}.
\]
So $\Gamma_{n+1}$ includes $\Delta_t$ for all terms $t\in \Terms$ such that $\Gamma_n\cup \set{\some{t}{t}}\proves_0 \varphi$. 
Let $\Gamma_\omega = \bigcup_{n\in \omega} \Gamma_n$, and let 
\[\Gamma^* = \Gamma_\omega\cup \set{\some{t}{t} : t\in \Terms\text{ and }\Delta_t\not\subseteq \Gamma_\omega}.\]

\begin{claim}\label{claim-chains}
There is some $n$ such that $\Gamma_n\proves_0\varphi$. 
\end{claim}

\begin{proof}
 $\Gamma^*$ determines existentials for $\Terms$,
and by Theorem~\ref{theorem-detextcompleteness}, $\Gamma^*\proves_0 \varphi$. 
By Lemma~\ref{lemma-someproof}, there is a sentence  $(\some{a_1}{a_2})\in \Gamma^*$  such that $\Gamma^*\proves_0 \all{a_i}{x}$ and $\Gamma^*\proves_0\all{a_j}{y}$ for some $i,j\in \set{1,2}$. Since $\proves_0$-proofs of {\sf all}-sentences are finite and never contain {\sf some}-sentences, there is already some $n$ such that $\Gamma_n\proves_0 \all{a_i}{x}$ and $\Gamma_n\proves_0\all{a_j}{y}$, and we have $\Gamma_n\cup \set{\some{a_1}{a_2}}\proves_0 \varphi$. 

It remains to show that the sentence $(\some{a_1}{a_2})$ belongs to $\Gamma\subseteq \Gamma_n$, since then $\Gamma_n\proves_0 \varphi$. Suppose not.   Then 
 $(\some{a_1}{a_2})$
  is not in $\Gamma_\omega$, since the sets $\Gamma_n$ only add sentences of the form $(\all{z}{(\allterm{r}{t})})$ to $\Gamma$. 
  So  $(\some{a_1}{a_2})$
   is a sentence $(\some{t}{t})$ such that $\Delta_t\not\subseteq \Gamma_\omega$. But then $\Gamma_n\cup \set{\some{t}{t}}\proves_0 \varphi$, so $\Delta_t\subseteq \Gamma_{n+1}\subseteq \Gamma_\omega$, which is a  contradiction.
\end{proof}

\begin{claim}\label{claim-hard-chains}
If there is some $n$ such that $\Gamma_n\proves_0\phi$, then $\Gamma\proves \phi$.
\end{claim}
\begin{proof}
Assume $\Gamma\not\proves \phi$. Then we will prove the following two claims for all $n$, by induction: 
\begin{enumerate}[(1)$_n$]
\item For every $k\geq 1$ and every
$\Gamma_{n}$-sequence of terms $t_1,\dots,t_k$, 
 there is an $(x,y)$ chain system $C_1,\dots,C_\ell$, such that
$C_\ell$ links $t_1$ to $t_k$, and such that for all $1\leq i\leq \ell$ and all $\psi\in C_i$,
$\Gamma\proves \psi$. 
\item $\Gamma_n\not\proves_0\phi$. 
\end{enumerate}

So assume (1)$_m$ and (2)$_m$ hold for all $m<n$. We will first prove (1)$_n$ by induction on $k$. 

In the base case, when $k = 1$, we have  $t_1 = t_k$, and $\Gamma  \proves \all{t_1}{t_k}$ by  (\axiom). 
The chain $(\all{t_1}{t_k})$ has no missing link terms and links $t_1$ to $t_k$. So we have the required $(x,y)$ chain system, consisting of just this one chain. 

Now suppose $k > 1$, and fix a $\Gamma_{n}$-sequence $t_1,\dots,t_k$.
Let $C_1,\dots,C_\ell$ be the $(x,y)$ chain system obtained by induction for the $\Gamma_n$-sequence $t_1,\dots, t_{k-1}$. Then $C_\ell$ links $t_1$ to $t_{k-1}$, so the last sentence in $C_{l}$ is $(\all{c}{t_{k-1}})$ for some term $c$. By the definition of $\Gamma_n$-sequence, there is a sentence $\all{d}{e}\in \Gamma_n$ and a sequence of verbs $\rvec$ such that $(\all{t_{k-1}}{t_k}) = \Anti(\rvec,(\all{d}{e}))$.

If $\all{d}{e}\in \Gamma$, then by repeated applications of $(\anti)$, $\Gamma\proves \all{t_{k-1}}{t_k}$. Since also $\Gamma\proves\all{c}{t_{k-1}}$ by induction, we have $\Gamma\proves \all{c}{t_k}$ by $(\barbara)$. Replacing the last sentence of $C_\ell$ with $(\all{c}{t_k})$, we are done.

If $\all{d}{e}\notin \Gamma$, then $(\all{d}{e})\in \Gamma_{m+1}\setminus \Gamma_{m}$ for some $0\leq m < n$. It follows that $e = (\allterm{r}{t})$ for some term $t$ such that $\Gamma_{m}\cup \set{\some{t}{t}}\proves_0 \phi$.  
By Lemma~\ref{lemma-someproof}, there is a sentence $(\some{p}{q})$ in $\Gamma_{m}\cup \set{\some{t}{t}}$ such that
for all $w\in\set{x,y}$ there is some $v\in\set{p,q}$ such that $\Gamma_{m}\proves_0 \all{v}{w}$.  
If $\some{p}{q}\in\Gamma_{m}$, then $\Gamma_{m}\proves_0 \phi$, contradicting (2)$_m$. 
Otherwise, 
 the sentence $(\some{p}{q})$ must be $(\some{t}{t})$. That is, $p = q = t$, so $\Gamma_{m}\proves_0 \all{t}{x}$ and $\Gamma_{m}\proves_0 \all{t}{y}$.

 By Lemma~\ref{lemma-allproof}, there are $\Gamma_{m}$-sequences linking $t$ to $x$ and $t$ to $y$. By (1)$_m$, there are $(x,y)$ chain systems $C'_1,\dots,C'_{\ell'}$ and $C''_1,\dots,C''_{\ell''}$ such that $C'_{\ell'}$ links $t$ to $x$, $C''_{\ell''}$ links $t$ to $y$, and for every sentence $\psi$ in every chain, $\Gamma\proves \psi$. Let $C_\ell^*$ be the chain $C_\ell$ with the sentence $(\all{t_k}{t_k})$ appended. Then $C_\ell^*$ links $t_1$ to $t_k$. We either have $t_{k-1} = (\allterm{\rvec}{d})$ and $t_k = (\allterm{\rvec}{(\allterm{r}{t})})$ where $\rvec$ has even length, or $t_{k-1} = (\allterm{\rvec}{(\allterm{r}{t})})$ and $t_k = (\allterm{\rvec}{d})$ where $\rvec$ has odd length, so the missing link terms in $C_\ell^*$ are those in $C_\ell$, together with $t$. Also $\Gamma\proves \psi$ for every sentence $\psi$ in $C^*_\ell$, by our assumption about $C_\ell$ and (\axiom). So $C'_1,\dots,C'_{\ell'},C''_1,\dots,C''_{\ell''},C_1,\dots,C_{\ell-1},C_\ell^*$ is our desired $(x,y)$ chain system. 

Having established (1)$_n$, we prove (2)$_n$. Assume for contradiction that $\Gamma_n\proves_0\phi$.  By Lemma~\ref{lemma-someproof}, there is a sentence ($\some{a_1}{a_2}$) in $\Gamma_n$ such that
$\Gamma_n\proves_0 \all{a_i}{x}$ and $\Gamma_n\proves_0 \all{a_j}{y}$ for some $i,j\in \set{1,2}$.
By Lemma~\ref{lemma-someproof}, we thus have $\Gamma_n$-sequences from $a_i$ to $x$ and from $a_j$ to $y$.
Apply (1)$_n$  to these sequences to obtain $(x,y)$ chain systems $C_1,\dots,C_\ell$ and $C'_1,\dots,C'_{\ell'}$  such that $C_l$ links $a_i$ to $x$ and $C'_{\ell'}$ links $a_j$ to $y$, and all sentences in all chains are $\proves$-provable from $\Gamma$. Then we have an instance of chains 
\[
\infer[\chains]{\some{x}{y}}{\some{a_i}{a_j} & C_1 & \dots & C_\ell & C_1' & \dots & C_\ell'}
\]
The sentence $(\some{a_1}{a_2})$ belongs to $\Gamma_n$, hence to  $\Gamma$.
 Applying $(\someone)$ or $(\sometwo)$ as needed, $\Gamma\proves \some{a_i}{a_j}$. So this deduction is the root of a proof tree proving $(\some{x}{y})$ from $\Gamma$.
\end{proof}
 
Claims~\ref{claim-chains} and~\ref{claim-hard-chains} complete the proof of Theorem~\ref{theorem-chains}.
\end{proof}

\subsection{Completeness and $\PTIME$ decidability for the extended language $\langtwo^+$}
\label{section-fourplace}

We have seen that  $\langtwo$ has no sound and complete syllogistic proof system.  
And we have seen proof systems which go beyond the ``purely syllogistic'' in earlier sections.    But this section goes in a different direction.
We show that if we enhance the syntax
of $\langtwo$ in a certain way, then we \emph{are} able to find a boundedly complete syllogistic proof system for the larger language.

We add to $\langtwo$ a new four-place sentence former 
\[
\allorsome{a}{b}{x}{y}
\]
with the evident semantics
\[
\Model\models \allorsome{a}{b}{x}{y}
\quadiff
\semantics{a}\subseteq \semantics{b} \mbox{ or } \semantics{x} \cap \semantics{y} \neq \emptyset.
\]
We call the larger language $\langtwo^+$. Note that we do not allow the disjunction of arbitrary sentences. Rather, there is a new kind of sentence, which is the disjunction of exactly one sentence $(\all{a}{b})$ and one sentence $(\some{x}{y})$.
For a proof system, we take the rules in Figure~\ref{Fig-fourplace-rules}, and we write $\proves$ for provability in this system.

\begin{figure}[h]
\begin{mathframe}
\begin{gather*}
\infer[\emptyone]{\allorsome{a}{b}{a}{a}}{} \qquad
\infer[\newsomeone]{\allorsome{a}{b}{x}{x}}{\allorsome{a}{b}{x}{y}} \qquad
\infer[\lweak]{\allorsome{a}{b}{x}{y}}{\all{a}{b}} \\\\
\infer[\emptytwo]{\allorsome{b}{(\allterm{r}{a})}{a}{a}}{}\qquad
\infer[\newsometwo]{\allorsome{a}{b}{y}{x}}{\allorsome{a}{b}{x}{y}}\qquad
 \infer[\rweak]{\allorsome{a}{b}{x}{y}}{\some{x}{y}}\\\\
\infer[\newbarbara]{\allorsome{a}{c}{x}{y}}{\allorsome{a}{b}{x}{y} & \allorsome{b}{c}{x}{y}}\qquad
\infer[\newanti]{\allorsome{(\allterm{r}{b})}{(\allterm{r}{a})}{x}{y}}{\allorsome{a}{b}{x}{y}}\\\\
\infer[\newdarii]{\some{x}{y}}{\some{t}{u} & \allorsome{t}{x}{x}{y} & \allorsome{u}{y}{x}{y}}\\\\
\infer[\newnewdarii]{\allorsome{a}{b}{x}{y}}{\allorsome{a}{b}{t}{u} & \allorsome{t}{x}{x}{y} & \allorsome{u}{y}{x}{y}}
\end{gather*}
\caption{Rules in Section~\ref{section-fourplace}.
These rules are added on top of the rules in Figure~\ref{baserules}.\label{Fig-fourplace-rules}}
\end{mathframe}
\end{figure}

\begin{lemma} The proof system is sound.
\end{lemma}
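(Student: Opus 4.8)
The plan is to prove soundness rule-by-rule: since provability $\Gamma \proves \phi$ is witnessed by a finite proof tree built from substitution instances of the rules in Figure~\ref{baserules} and Figure~\ref{Fig-fourplace-rules}, it suffices to show that each individual rule is sound, i.e.\ that whenever a model $\Model$ satisfies all the premises of a substitution instance, it satisfies the conclusion. Soundness of the whole system then follows by a routine induction on the height of the proof tree. The rules of Figure~\ref{baserules} were already observed to be sound, so the work is entirely in checking the new rules governing the four-place sentence former $(\allorsome{a}{b}{x}{y})$. Throughout I would unfold the semantics, writing $P$ for the disjunct ``$\semantics{a}\subseteq\semantics{b}$'' and $Q$ for the disjunct ``$\semantics{x}\cap\semantics{y}\neq\emptyset$,'' so that $\Model\models\allorsome{a}{b}{x}{y}$ means $P \lor Q$.

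First I would dispatch the ``structural'' rules, which are immediate from Boolean reasoning on the disjunction together with the already-known facts about $\all{\cdot}{\cdot}$ and $\some{\cdot}{\cdot}$. For $(\emptyone)$, the conclusion $\allorsome{a}{b}{a}{a}$ is valid because either $\semantics{a}=\emptyset$, giving $\semantics{a}\subseteq\semantics{b}$, or $\semantics{a}\neq\emptyset$, giving $\semantics{a}\cap\semantics{a}\neq\emptyset$. For $(\emptytwo)$, either $\semantics{a}=\emptyset$, whence $\semantics{\allterm{r}{a}}=M\supseteq\semantics{b}$, or $\semantics{a}\neq\emptyset$. The weakening rules $(\lweak)$ and $(\rweak)$ just introduce a disjunct from an established $\all{a}{b}$ or $\some{x}{y}$. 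The ``new some'' rules $(\newsomeone)$ and $(\newsometwo)$ only alter the existential disjunct, and are sound because $\semantics{x}\cap\semantics{y}\neq\emptyset$ implies both $\semantics{x}\cap\semantics{x}\neq\emptyset$ and $\semantics{y}\cap\semantics{x}\neq\emptyset$; the universal disjunct is carried along unchanged. For $(\newbarbara)$ and $(\newanti)$, I would argue by cases on whether the existential disjunct $Q$ holds: if it does, the conclusion holds via $Q$; if not, both premises must hold via their universal disjuncts, and then I apply the soundness of $(\barbara)$ (transitivity of $\subseteq$) respectively $(\anti)$ (the inclusion-reversing behavior of $\allterm{r}{\cdot}$) to obtain the universal disjunct of the conclusion.

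The substantive cases are the last two rules, $(\newdarii)$ and $(\newnewdarii)$, which are the ones genuinely mixing the two disjuncts. For $(\newdarii)$, suppose $\Model$ satisfies $\some{t}{u}$, $\allorsome{t}{x}{x}{y}$, and $\allorsome{u}{y}{x}{y}$; I must show $\semantics{x}\cap\semantics{y}\neq\emptyset$. If either of the latter two premises holds via its existential disjunct $\semantics{x}\cap\semantics{y}\neq\emptyset$, we are done immediately. Otherwise both hold via their universal disjuncts, so $\semantics{t}\subseteq\semantics{x}$ and $\semantics{u}\subseteq\semantics{y}$; combined with $\semantics{t}\cap\semantics{u}\neq\emptyset$ from the first premise, any witness in $\semantics{t}\cap\semantics{u}$ lies in $\semantics{x}\cap\semantics{y}$, giving the conclusion. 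The rule $(\newnewdarii)$ is the same argument relativized: its conclusion is $\allorsome{a}{b}{x}{y}$, so I may additionally assume the universal disjunct $\semantics{a}\subseteq\semantics{b}$ fails (else we are done); then the first premise $\allorsome{a}{b}{t}{u}$ must hold via $\semantics{t}\cap\semantics{u}\neq\emptyset$, and the identical case analysis on the other two premises yields $\semantics{x}\cap\semantics{y}\neq\emptyset$, hence the conclusion.

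There is no serious obstacle here — the whole proof is a disjunction of finitely many case checks — but the step requiring the most care is the pair $(\newdarii)$/$(\newnewdarii)$, precisely because one must correctly organize the nested case split (first on the existential disjunct of the conclusion or of the first premise, then on the disjuncts of the remaining two premises) so that the transitivity-of-inclusion argument is only invoked in the branch where all the relevant universal disjuncts are guaranteed to hold. I would write $(\newnewdarii)$ as the general case and note $(\newdarii)$ as the specialization where the conclusion's universal disjunct is vacuous (or simply present both explicitly to keep the bookkeeping transparent). The induction on proof-tree height that assembles these into full soundness is entirely standard and I would state it in one sentence.
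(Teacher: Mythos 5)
Your proof is correct and follows essentially the same route as the paper: a rule-by-rule check, with $(\emptyone)$, $(\emptytwo)$, and $(\newdarii)$ as the cases needing real argument (the paper dispatches the remaining rules as ``clear from the meaning of disjunction,'' whereas you spell them out, and you state the $(\newdarii)$ case contrapositively rather than by contradiction --- both immaterial differences). No gaps.
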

\begin{proof}
We argue soundness for $(\emptyone)$, $(\emptytwo)$, and $(\newdarii)$, since soundness of the other rules is clear from the meaning of disjunction. Fix a model $\Model$.

For $(\emptyone)$, either $\semantics{a}\neq \emptyset$, or $\emptyset = \semantics{a}\subseteq \semantics{b}$. In either case, $\Model\models \allorsome{a}{b}{a}{a}$. 

For $(\emptytwo)$, either $\semantics{a} \neq \emptyset$, or $\semantics{a} = \emptyset$. In the latter case, $\semantics{b}\subseteq \semantics{\allterm{r}{a}} = M$. In either case, $\Model\models \allorsome{b}{(\allterm{r}{a})}{a}{a}$. 

For $(\newdarii)$, suppose that $\Model$ satisfies the premises of the rule, and assume for contradiction that $\semantics{x}\cap \semantics{y} = \emptyset$. Then $\Model\models \all{t}{x}$ and $\Model\models \all{u}{y}$. So $\semantics{t}\cap\semantics{u}\subseteq \semantics{x}\cap \semantics{y} = \emptyset$, contradicting $\Model\models \some{t}{u}$.  
\end{proof}

\begin{example}
The rules $(\newdarii)$ and its companion $(\newnewdarii)$ stand for ``double darii''. To see the connection between these rules and $(\darii)$, we note that $(\darii)$ is redundant in this system:
\[
\infer[\newdarii]{\some{x}{z}}{\some{x}{y} & \infer[\lweak]{\allorsome{x}{x}{x}{z}}{\infer[\axiom]{\all{x}{x}}{}} & \infer[\lweak]{\allorsome{y}{z}{x}{z}}{\all{y}{z}}}
\]
Note also that $(\newdarii)$ is the only new rule in our proof system which produces a conclusion in the original syntax of $\langtwo$. It is responsible, together with $(\emptytwo)$, for the reasoning captured by $(\casesrule)$ and $(\chains)$ in the previous two sections.
\end{example}

\begin{example}  
We show that $\Gamma_n\proves \some{a}{a}$, where once again 
$\Gamma_n$ is from Section~\ref{section-no-finite}. 
For each $1\leq i < n$, we have the derivation:
\[
\infer[\newbarbara]{\allorsome{(\allterm{r_i^2}{a})}{(\allterm{r_{i+1}^2}{a})}{a}{a}}
{\infer[\newanti]{\allorsome{(\allterm{r_i^2}{a})}{(\allterm{r_i}{b})}{a}{a}}
{\infer[\emptytwo]{\allorsome{b}{(\allterm{r_i}{a})}{a}{a}}{}}
&
\infer[\lweak]{\allorsome{(\allterm{r_i}{b})}{(\allterm{r_{i+1}^2}{a})}{a}{a}}{\phi_i}
}
\]
and similarly, we have:
\[
\infer[\newbarbara]{\allorsome{(\allterm{r_n^2}{a})}{a}{a}{a}}
{\infer[\newanti]{\allorsome{(\allterm{r_n^2}{a})}{(\allterm{r_n}{b})}{a}{a}}
{\infer[\emptytwo]{\allorsome{b}{(\allterm{r_n}{a})}{a}{a}}{}}
&
\infer[\lweak]{\allorsome{(\allterm{r_n}{b})}{a}{a}{a}}{\omega}
}
\]
By $n$ applications of $(\newbarbara)$, we obtain $\allorsome{(\allterm{r_1^2}{a})}{a}{a}{a}$. And then we conclude:
\[
\infer[\newdarii]{\some{a}{a}}{
\alpha & \allorsome{(\allterm{r_1^2}{a})}{a}{a}{a}
}
\]
\end{example}

\paragraph{Completeness and $\PTIME$-decidability}
At this point, we turn to the completeness and $\PTIME$-decidability of the logic. 
We are going to apply Theorem~\ref{theorem-ptime}. 
For any set $\Delta$,
let $T(\Delta)$ be the set of subterms of sentences in $\Delta$.
Let $T^+(\Delta)$ be $T(\Delta)$ together with the terms $(\allterm{r}{w})$
where $w\in T(\Delta)$ and where $r$ occurs in $\Delta$.

Let $g(\Delta)$ be the set consisting of 
\begin{description}
\item{(i)} All sentences $(\all{x}{y})$,
where $x\in T(\Delta)$ and $y\in T^+(\Delta)$
\item{(ii)} All sentences $(\some{u}{v})$, where $u, v\in T(\Delta)$.
\item{(iii)} All sentences $\allorsome{x}{y}{u}{v}$, where $x,u,v\in T(\Delta)$ and $y\in T^+(\Delta)$.
\end{description}
Note that $g$ is computable in $\PTIME$.

\begin{theorem}\label{theorem-twoplus} 
If  $\Gamma\models \phi$, then $\Gamma\proves_{g(\Gamma\cup\set{\phi})} \phi$.
Hence the consequence relation for $\langtwo^+$ is in $\PTIME$.
\end{theorem}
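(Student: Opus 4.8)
The second assertion follows from the first together with Theorem~\ref{theorem-ptime}, since $g$ is $\PTIME$-computable; so the whole content is the bounded-completeness statement that $\Gamma\models\phi$ implies $\Gamma\provesA\phi$, where $A=g(\Gamma\cup\set{\phi})$. I would split on the shape of $\phi$. If $\phi=(\all{x}{y})$, note that no rule in Figure~\ref{Fig-fourplace-rules} has an {\sf all}-sentence as its conclusion, so any derivation of an {\sf all}-sentence stays inside the $\langone$-fragment $(\axiom),(\barbara),(\anti)$. Moreover the first canonical model of Section~\ref{sec-basesystem} satisfies every {\sf some}-sentence (and hence every four-place sentence) over $\Terms$ for free, so the argument of Theorem~\ref{theorem-allcomplete} goes through for $\langtwo^+$-theories, and the confinement bookkeeping of Proposition~\ref{prop-g} (whose $g$ is exactly clause~(i) of the present $g$) yields $\Gamma\provesA\all{x}{y}$. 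Thus the real work is the case $\phi=(\some{x}{y})$; a four-place conclusion is treated the same way relative to its {\sf some}-part.

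For $\phi=(\some{x}{y})$ I would argue the contrapositive: assuming $\Gamma\not\provesA\some{x}{y}$, I build a model of $\Gamma$ in which $\semantics{x}\cap\semantics{y}=\emptyset$. The key device is to read the four-place connective, \emph{with fixed {\sf some}-part} $x,y$, as a conditional $\all$: define $a\sqsubseteq b$ to mean $\Gamma\provesA\allorsome{a}{b}{x}{y}$. Using $(\axiom)/(\lweak)$, $(\newbarbara)$ and $(\newanti)$ one checks that $\sqsubseteq$ is a preorder extending $\leq$ and satisfying $a\sqsubseteq b\Rightarrow(\allterm{r}{b})\sqsubseteq(\allterm{r}{a})$ — the conditional analogues of the rules driving the canonical models. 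Call $a\in T$ \emph{empty} if $a\sqsubseteq x$ and $a\sqsubseteq y$, and \emph{solid} otherwise. The fact that replaces the ``determines existentials'' hypothesis is that empty terms behave like $\emptyset$: if $a$ is empty, then $w\sqsubseteq(\allterm{r}{a})$ for all $w$ and $r$. This is a single step: feed the axiom $(\emptytwo)$, i.e.\ $\allorsome{w}{(\allterm{r}{a})}{a}{a}$, together with $a\sqsubseteq x$ and $a\sqsubseteq y$, into $(\newnewdarii)$, which converts the {\sf some}-part from $(a,a)$ to $(x,y)$.

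I would then take the domain $M$ to be the pairs $\set{t,u}$ (including singletons) of \emph{solid} terms of $T$ with $t=u$ or $\Gamma\provesA\some{t}{u}$, interpreting nouns and verbs as in~(\ref{canonical-model}) but with $\leq$ replaced by $\sqsubseteq$: $\set{t,u}\in\semantics{p}$ iff $t\sqsubseteq p$ or $u\sqsubseteq p$, and $\set{t,u}\semantics{r}\set{v,w}$ iff $a\sqsubseteq(\allterm{r}{b})$ for some $a\in\set{t,u}$, $b\in\set{v,w}$. The Truth Lemma $\semantics{a}=\set{\set{t,u}\in M:t\sqsubseteq a\text{ or }u\sqsubseteq a}$ is proved by induction on $a\in T$; the $(\allterm{r}{a})$ step uses $(\newanti)$ in the forward direction, the singleton witness $\set{a}$ (available exactly because solid singletons are kept in $M$) in the solid case of the backward direction, and the emptiness fact above in the empty case. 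Satisfaction of $\Gamma$ then follows: {\sf all}-sentences by $(\lweak)$ and transitivity; {\sf some}-sentences by exhibiting the pair $\set{c,d}$, which lies in $M$ because $\Gamma\provesA\some{c}{d}$ forces $c,d$ solid (otherwise $(\someone)$ and $(\newdarii)$ would prove $\some{x}{y}$); and four-place sentences by a further appeal to the double-darii rules. Finally $\semantics{x}\cap\semantics{y}=\emptyset$: a pair in the intersection, after discarding the excluded empty-component and singleton cases, would have one component $\sqsubseteq x$ and another $\sqsubseteq y$ with $\Gamma\provesA\some{t}{u}$, whence $(\newdarii)$ would give $\Gamma\provesA\some{x}{y}$, a contradiction.

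The main obstacle is calibrating ``empty'' and the domain so that the Truth Lemma closes. The tension is that the emptiness fact needed for the $(\allterm{r}{a})$ step is only available for terms conditionally below both $x$ and $y$, whereas the naive domain (pairs with $\Gamma\provesA\some{t}{u}$, as in the second canonical model) would also drop solid terms $a$ for which $\some{a}{a}$ is unprovable, wrongly forcing $\semantics{(\allterm{r}{a})}=M$ and breaking the lemma; keeping \emph{all} solid singletons in $M$ is precisely what repairs this, and it is where $(\newnewdarii)$ and the interplay of the two double-darii rules are indispensable. I expect verifying $\Model\models\Gamma$ for four-place sentences of $\Gamma$ to be the most delicate individual point. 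A secondary, purely bookkeeping obstacle is to confirm at each rule application that the four-place and {\sf some}-sentences involved actually lie in $A=g(\Gamma\cup\set{\phi})$; this is where clauses (i)--(iii) of $g$ and the one-layer closure of $T^+$ under $(\allterm{r}{\cdot})$ are exactly what is needed, just as in Proposition~\ref{prop-g}.
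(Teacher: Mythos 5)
Your overall strategy is the paper's: for the {\sf some}/four-place case you fix $x,y$, define the conditional preorder $a\sqsubseteq b$ iff $\Gamma\provesA\allorsome{a}{b}{x}{y}$, establish the emptiness fact via $(\emptytwo)$ and $(\newnewdarii)$, and build a canonical model on unordered pairs of terms. But your choice of domain opens a genuine gap. You take $M$ to be the pairs $\set{t,u}$ of \emph{solid} terms with $t=u$ or $\Gamma\provesA\some{t}{u}$. The paper instead takes \emph{all} pairs $\set{t,u}$ from $T$ except those for which some component is $\sqsubseteq x$ \emph{and} some component is $\sqsubseteq y$; that is, it excludes exactly the pairs that the Truth Lemma would force into $\semantics{x}\cap\semantics{y}$. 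The two domains agree on singletons, but yours additionally discards every pair $\set{c,d}$ with $c\neq d$, both solid, for which $\Gamma\not\provesA\some{c}{d}$ --- and these are precisely the pairs needed to witness four-place hypotheses in $\Gamma$.

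Concretely, let $s,t,c,d,x,y$ be six distinct nouns, $\Gamma=\set{\allorsome{s}{t}{c}{d}}$, and $\phi=(\some{x}{y})$. By soundness $\Gamma\not\provesA\some{x}{y}$, every term is solid, and $\Gamma\not\provesA\some{c}{d}$, so your domain consists of the six singletons only. Then $\semantics{c}\cap\semantics{d}=\emptyset$ while $\semantics{s}=\set{\set{s}}\not\subseteq\set{\set{t}}=\semantics{t}$, so your model does not satisfy $\Gamma$, and there is no fallback derivation of $\phi$ since $\Gamma\not\models\phi$. The ``further appeal to the double-darii rules'' you invoke for four-place sentences is exactly what is unavailable here: $(\newnewdarii)$ converts the {\sf some}-part of $\allorsome{s}{t}{c}{d}$ from $(c,d)$ to $(x,y)$ only if you can prove $\allorsome{e}{x}{x}{y}$ and $\allorsome{f}{y}{x}{y}$ for suitable $e,f\in\set{c,d}$, and the paper's domain is calibrated so that a pair is excluded precisely when those premises are provable. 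The repair is to adopt the paper's domain --- drop both the solidity and the provable-{\sf some} requirements, and instead exclude $\set{t,u}$ exactly when each of $x$ and $y$ has some $v\in\set{t,u}$ with $v\sqsubseteq$ it; with that change your Truth Lemma, the verification of $\Gamma$, and the disjointness of $\semantics{x}$ and $\semantics{y}$ go through essentially as you describe, and your handling of the pure {\sf all} case matches the paper's second part.
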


\begin{proof}
As in Section~\ref{section-strongly},
we are going to save on some notation below by writing $T$ for $T(\Gamma\cup\set{\phi})$,
$T^+$ for $T^+(\Gamma\cup\set{\phi})$,
and $A$ for $g(\Gamma\cup\set{\phi})$.

We are going to do this entire proof in two parts.   The first part handles the 
case that $\phi$ is either  $\allorsome{a}{b}{x}{y}$ or else $(\some{x}{y})$.  
After that, we handle the relatively simpler case that $\phi$ is $(\all{a}{b})$.

So until further notice, we are in the first part of this theorem.
Please note that $x$, $y$, $a$, and $b$ are fixed throughout the rest of this proof.

Let $M = M_{xy}$ be the set of unordered pairs $\set{t,u}$ of terms from $T$ such that there is some $z\in \set{x,y}$ such that for all $v\in \set{t,u}$, $\Gamma\not\provesA \allorsome{v}{z}{x}{y}$.

We allow $t= u$, and it follows that
whenever  $M$ contains $\set{t,u}$, then it also contains 
   $\set{t} = \set{t,t}$.
 The point of the definition of $M$ will become clearer after we see the Truth Lemma and Claim~\ref{lastclaimlangtwoplus} below:
 we are building a model which is guaranteed to have $\semantics{x}\cap\semantics{y} = \emptyset$.  
   
    We define a model $\Model$ with domain $M$ by setting
\begin{equation*}
\begin{split}
\set{t,u}\in \semantics{p} &\quadiff  \mbox{either } \Gamma \provesA \allorsome{t}{p}{x}{y} \mbox{, or } \Gamma \provesA \allorsome{u}{p}{x}{y}\\
\set{t,u}\semantics{r}\set{v,w}  & \quadiff \mbox{for some $c\in \set{t,u}$ and $d\in\set{v,w}$, }\Gamma\provesA\allorsome{c}{(\allterm{r}{d})}{x}{y}.
\end{split}
\end{equation*}

\begin{claim} [Truth Lemma] In $\Model$,we have the following for all terms $z\in T$,
\[
\semantics{z} =  \set{\set{t,u}\in M :  \Gamma\provesA \allorsome{t}{z}{x}{y} \mbox{ or }\Gamma \provesA \allorsome{u}{z}{x}{y}}.
\]\label{langtwoplustruthlemma}
\end{claim}
\begin{proof}
By induction on $z$. For a noun in $\bP$, this is by definition of the model.
So we assume our statement for $z$ and prove it for $(\allterm{r}{z})\in T$.
Note that $z\in T$, since $T$ is closed under subterms.   

Fix $\set{t,u}\in M$, and suppose (without loss of generality) $\Gamma\provesA\allorsome{t}{(\allterm{r}{z})}{x}{y}$. 
We show that 
 $\set{t,u}\in \semantics{\allterm{r}{z}}$. 
 Let $\set{v,w}\in M$ be an element of $\semantics{z}$. By the induction hypothesis, we have (without loss of generality) $\Gamma\provesA\allorsome{v}{z}{x}{y}$. Then by $(\newanti)$, $\Gamma\provesA \allorsome{(\allterm{r}{z})}{(\allterm{r}{v})}{x}{y}$.
Note that the sentence $\allorsome{(\allterm{r}{z})}{(\allterm{r}{v})}{x}{y}$ belongs to $A$
because $\allterm{r}{z}$, $x$, and $y$ belong to $T$ and 
$\allterm{r}{v}$ to $T^+$.  Moreover,  $\allorsome{t}{(\allterm{r}{v})}{x}{y}$ again belongs to $A$.
By $(\newbarbara)$, $\Gamma\provesA \allorsome{t}{(\allterm{r}{v})}{x}{y}$, and hence $\set{t,u}\semantics{r} \set{v,w}$. So $\set{t,u}\in \semantics{\allterm{r}{z}}$. 
 
Conversely, suppose $\set{t,u}\in\semantics{\mbox{\sf $r$ all $z$}}$. 

Case 1: $\set{z}\in M$.  Notice that $\Gamma\provesA\allorsome{z}{z}{x}{y}$ by $(\axiom)$ and $(\lweak)$. 
By induction, $\set{z}\in \semantics{z}$.  Hence $\set{t,u}\semantics{r}\set{z}$.  So  by the definition of the model, we have
 either $\Gamma\provesA \allorsome{t}{(\allterm{r}{z})}{x}{y}$ or $\Gamma \provesA \allorsome{u}{(\allterm{r}{z})}{x}{y}$, as desired.

Case 2: $\set{z}\notin M$. Then $\Gamma\provesA\allorsome{z}{x}{x}{y}$ 
and $\Gamma\provesA\allorsome{z}{y}{x}{y}$. So we have a proof from $\Gamma$:
\[
\infer[\newnewdarii]{\allorsome{t}{(\allterm{r}{z})}{x}{y}}{\infer[\emptytwo]{\allorsome{t}{(\allterm{r}{z})}{z}{z}}{} &
\infer*{\allorsome{z}{x}{x}{y}}{} & \infer*{\allorsome{z}{y}{x}{y}}{}}
\]
As before, all sentences shown belong to $A$.
We are done.
Incidentally,
the same argument shows that also $\Gamma\provesA\allorsome{u}{(\allterm{r}{z})}{x}{y}$. 
\end{proof}

We conclude the first part of our proof of Theorem~\ref{theorem-twoplus} 
with two claims.  Together with the assumption that $\Gamma\models\phi$, they show that $\Gamma\provesA \phi$, where 
$\phi$ is the sentence in the statement of our theorem.   
In this part of the proof, recall that $\phi$ is
either $\allorsome{a}{b}{x}{y}$ or $(\some{x}{y})$.

\begin{claim}
\label{nexttolastclaimlangtwoplus}
 Either $\Gamma\provesA \phi$, or 
$\Model \models\Gamma$.
\end{claim}

\begin{proof}  
Let $\psi$ be a sentence in $\Gamma$.   We check that either $\Gamma\provesA \phi$, or 
$\Model \models\psi$.

Case 1: $\psi$ is  $(\all{c}{d})$. By $(\lweak)$, $\Gamma\provesA \allorsome{c}{d}{x}{y}$. For any $\set{t,u}\in \semantics{c}$, by the Truth Lemma (without loss of generality) $\Gamma\provesA \allorsome{t}{c}{x}{y}$. By $(\newbarbara)$, $\Gamma\provesA\allorsome{t}{d}{x}{y}$.
So $\set{t,u}\in \semantics{d}$ by the Truth Lemma again. So in this case, we have $\Model\models \psi$. 

Case 2: $\psi$ is  $(\some{c}{d})$. 
There are two subcases, depending on whether  or not $\set{c,d}$ belongs to  $M$.
If it does, then  $\set{c,d}\in \semantics{c}\cap \semantics{d}$, so $\Model\models \psi$. 
So we assume that $\set{c,d}\notin M$. 
Then there are  $e,f\in \set{c,d}$ such that $\Gamma\provesA \allorsome{e}{x}{x}{y}$ and $\Gamma\provesA\allorsome{f}{y}{x}{y}$. Applying $(\someone)$ and $(\sometwo)$ as needed, $\Gamma\provesA \some{e}{f}$. Then by $(\newdarii)$, $\Gamma\provesA \some{x}{y}$. 
If $\phi$ is $(\some{x}{y})$, then we are immediately done.
 And if $\phi$ is $\allorsome{a}{b}{x}{y}$, then we are done after applying $(\rweak)$. 

Case 3: $\psi$ is  $\allorsome{s}{t}{c}{d}$. This is a combination of the two previous arguments. 

Again there are two subcases, depending on whether  or not $\set{c,d}$ belongs to  $M$.
If it does, then  $\set{c,d}\in \semantics{c}\cap \semantics{d}$, so $\Model\models \psi$. 
So we assume that $\set{c,d}\notin M$. 
Then there are  $e,f\in \set{c,d}$ such that $\Gamma\provesA \allorsome{e}{x}{x}{y}$ and $\Gamma\provesA\allorsome{f}{y}{x}{y}$. Applying $(\newsomeone)$ and $(\newsometwo)$ as needed, $\Gamma\provesA \allorsome{s}{t}{e}{f}$. Then by $(\newnewdarii)$, $\Gamma\provesA \allorsome{s}{t}{x}{y}$. For any $\set{u,v}\in \semantics{s}$, by the Truth Lemma (without loss of generality) $\Gamma\provesA \allorsome{u}{s}{x}{y}$. By $(\newbarbara)$, $\Gamma\provesA\allorsome{u}{t}{x}{y}$.
So $\set{u,v}\in \semantics{t}$ by the Truth Lemma again. So we have $\Model\models \all{s}{t}$, and $\Model\models \psi$.  
\end{proof}

\begin{claim}
\label{lastclaimlangtwoplus}
If $\Model \models\phi$,  then $\Gamma\provesA \phi$. 
\end{claim}

\begin{proof}  
Case 1: $\phi$ is $(\some{x}{y})$.
In this case, we claim that $\Model \not\models\phi$.
The reason is that by the Truth Lemma and the definition of $M$,
$$\set{t,u}\in M \quad\mbox{iff}\quad \set{t,u}\notin\semantics{x}\cap\semantics{y}.
$$

 Case 2: $\phi$ is $\allorsome{a}{b}{x}{y}$.
By Case 1, we assume that $\Model\models\all{a}{b}$.
Consider $\set{a}$. If $\set{a}\notin M$, then $\Gamma\provesA\allorsome{a}{x}{x}{y}$ and $\Gamma\provesA\allorsome{a}{y}{x}{y}$. Then we have the following proof of $\phi$ from $\Gamma$:
\[
\infer[\newnewdarii]{\allorsome{a}{b}{x}{y}}{
\infer[\emptyone]{\allorsome{a}{b}{a}{a}}{} &
\infer*{\allorsome{a}{x}{x}{y}}{} & 
\infer*{\allorsome{a}{y}{x}{y}}{}
}
\]
 All sentences shown belong to $A$.
So we have the desired conclusion  $\Gamma\provesA \phi$. 
 On the other hand, if $\set{a}\in M$, then since 
 $\set{a}\in\semantics{a}$ and 
 $\Model\models\all{a}{b}$, we have $\set{a}\in\semantics{b}$.
By the Truth Lemma, we again have $\Gamma\provesA \allorsome{a}{b}{x}{y}$.
\end{proof}

This concludes the first part of the proof of Theorem~\ref{theorem-twoplus}.
The second part is when $\phi$ is a sentence $(\all{c}{d})$.   In this part, we repeat the construction and proof above, with the following adjustments: 
\begin{enumerate}
\item We let $M$ be the set of all unordered pairs of terms from $T$, with no restriction.
\item We drop the disjunct $\lor (\some{x}{y})$ from all sentences which appear in the proof, including the definition of $\Model$ and the statement of the Truth Lemma.
\item In the proof of the Truth Lemma, Case 2 does not occur, since $\set{z}\in M$. 
\item In the proof of Claim~\ref{nexttolastclaimlangtwoplus}, the subcases where $\set{c,d}\notin M$ do not occur. 
\item We replace the proof of Claim~\ref{lastclaimlangtwoplus} with the following argument. Recall that $\phi$ is $(\all{c}{d})$. We assume that $\semantics{c}\subseteq\semantics{d}$,
and we need to show that $\Gamma\provesA \all{c}{d}$.  By the Truth Lemma, $\set{c}\in\semantics{c}$.
Thus,  $\set{c}\in\semantics{d}$.  By the Truth Lemma again, $\Gamma\provesA \all{c}{d}$. \qedhere
\end{enumerate}
\end{proof}

 \section{$\langthree$ and $\langthreesecond$: Adding the term former $(\someterm{r}{x})$ to $\langone$ and $\langtwo$}
 \label{section-three}
 
 In this section, we study $\langthree$, the language with term formers 
 $(\allterm{r}{x})$ and $(\someterm{r}{x})$, 
 and with sentence former $(\all{x}{y})$. 
 We also study 
 the larger language $\langthreesecond$ which adds the sentence former 
  $(\some{x}{y})$. 
  
The language $\langthreesecond$ has essentially already been studied by 
McAllester and Givan in~\cite{logic:mcA+G92}, but that paper is primarily concerned with complexity results rather than completeness results.   What McAllester and Givan would call a \emph{quantifier-free atomic formula without constants}
is exactly what we call a sentence of $\langthreesecond$.   What they would call
a \emph{quantifier-free literal  without constants}
is either an   $\langthreesecond$ sentence $\phi$ or its negation $\nott\phi$.
They show that the satisfiability problem for sets $\Gamma$ of literals which determine existentials is in $\PTIME$.
And from this, they derive that the 
 satisfiability problem for sets $\Gamma$ of literals (which perhaps do not determine existentials) of literals is in $\NPTIME$. 
 Thus, their result implies that the  consequence  relation for $\langthreesecond$ is in $\CONP$.
 They prove a matching hardness result as well, and so the 
 consequence  relation for $\langthreesecond$ is $\CONP$ complete. 

We show that the consequence relation  for $\langthree$
is $\CONP$ hard.  
  Our proof  is
based on the one in~\cite{logic:mcA+G92}, and
the result here is  a slight improvement on~\cite{logic:mcA+G92} because $\langthree$ is a little weaker than their language.
As a corollary, if {\sc P} $\neq$ {\sc \NP}, then there is no boundedly complete syllogistic proof system for  $\langthree$
 or any language larger than it.
 
In  Sections~\ref{section-completeness-langthree} and~\ref{section-completeness-langthreesecond} we formulate proof systems and
 obtain completeness results
for $\langthree$ and
$\langthreesecond$. We also reprove the $\CONP$ decidability of $\langthreesecond$ by a polynomial-size countermodel construction.

 \subsection{$\CONP$ hardness of the consequence relation of $\langthree$}
 \label{section-one-sentence-former}

\begin{theorem} The  problem of deciding whether $\Gamma\not\models\phi$,
for $\Gamma\cup\set{\phi}$ a finite set of $\langthree$-sentences, is $\NPTIME$ hard.
\label{theorem-McA-G}
\end{theorem}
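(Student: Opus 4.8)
The plan is to reduce from \textbf{one-in-three positive 3-SAT}, which is $\NPTIME$-complete: given propositional variables $v_1,\dots,v_k$ and clauses $C_1,\dots,C_m$, each a set of three distinct variables, decide whether there is a truth assignment making \emph{exactly one} variable true in each clause. Given such an instance I will produce, in polynomial time, a finite $\langthree$-theory $\Gamma$ and a sentence $\phi$ with the property that the instance is one-in-three satisfiable if and only if $\Gamma\not\models\phi$. The engine of the encoding is the behavior of the term formers on a \emph{single-point} model $M=\set{\star}$, where each noun and each verb becomes a propositional variable (true at $\star$ or not). A direct computation from the semantics gives, at $\star$,
\[
\semantics{\someterm{r}{x}}\leftrightarrow (r\wedge x),\qquad \semantics{\allterm{r}{x}}\leftrightarrow (x\to r),\qquad \Model\models\all{x}{y}\ \leftrightarrow\ (x\to y).
\]
Thus ``all''-sentences read as implications between the propositional combinations built from these two connectives, which is exactly the expressive slack we exploit.

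The construction is as follows. I take two nouns $T$ (``true'') and $F$ (``false'') and set $\phi=\all{T}{F}$; any countermodel must contain a witness $\star\in\semantics{T}\setminus\semantics{F}$, pinning down ``true'' and ``false'' at $\star$. I represent each SAT variable $v_i$ by a verb $r_i$, reading off the value of $v_i$ from whether $\star$ lies in $\semantics{\someterm{r_i}{T}}$. A fixed ``false'' verb $f$ (forced to have no relevant successor at the witness via a sentence of the form $\all{\someterm{f}{T}}{F}$) lets me encode negation, since $\allterm{f}{x}$ reads as $\neg x$ when $f$ is false. Because the only primitive conjunction available is the ``verb $\wedge$ term'' shape of $\someterm{r}{x}$, I introduce \emph{defined verbs}: for each literal I add a verb $s$ together with $\langthree$-sentences asserting the two implications that force $s\leftrightarrow(\text{that literal})$ at the witness, after which nested $(\someterm{s}{\,\cdot\,})$ expressions simulate arbitrary conjunctions. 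With negation and conjunction in hand I build, for each clause $C_j=\set{v_a,v_b,v_c}$, the standard gadget: three ``not both'' sentences (each a single implication of the form $\all{\someterm{r_a}{T}}{\allterm{f}{\someterm{r_b}{T}}}$) enforcing at-most-one, and one ``at least one'' sentence (the negation of the conjunction of the three negated literals, asserted of $T$) enforcing at-least-one. Their conjunction is exactly the one-in-three constraint.

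For correctness in the easy direction, a one-in-three satisfying assignment yields a one-point model in which the $r_i$ are set according to the assignment; the displayed identities make it routine to check that every sentence of $\Gamma$ holds and that $\phi$ fails. The harder direction is to show that \emph{any} countermodel $\Model\models\Gamma$ with $\Model\not\models\phi$ produces a one-in-three assignment: fixing a witness $\star$, I read the assignment from the ``type'' of $\star$ (which of the $\someterm{r_i}{T}$ contain it) and argue that the clause sentences of $\Gamma$, which constrain membership of $\star$, force the one-in-three conditions. The \textbf{main obstacle} is precisely this soundness direction over arbitrary models: unlike in the one-point case, the terms $\someterm{r}{x}$ and $\allterm{r}{x}$ are not truth-functional in the types of their subterms, and the vacuous-truth phenomenon (a term may denote $\emptyset$, making $\allterm{r}{\cdot}$ and the implications behave degenerately) can decouple the witness's type from the intended Boolean reading. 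Overcoming this requires designing $\Gamma$ so that the defined-verb equivalences and the ``true''/``false'' anchors $T,F,f$ are forced \emph{at the witness} regardless of the rest of the model, so that the restriction of $\Model$ to the relevant terms at $\star$ behaves like the single-point Boolean model used in the forward direction. Once this robustness is established, the equivalence and the polynomial size bound of $(\Gamma,\phi)$ are immediate, giving $\NPTIME$-hardness of $\Gamma\not\models\phi$, equivalently $\CONP$-hardness of the consequence relation for $\langthree$.
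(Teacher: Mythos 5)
You have chosen the right reduction source (one-in-three positive $3$-SAT, as in the paper), but your encoding is genuinely different from the paper's, and it has a gap that you yourself flag but do not close: the soundness direction over arbitrary models. Your scheme reads the truth value of a variable off the \emph{type of a single witness point} $\star\in\semantics{T}\setminus\semantics{F}$, and relies on $(\allterm{f}{x})$ acting as local negation and on defined verbs simulating conjunction at $\star$. But in an arbitrary model, $\star\in\semantics{\allterm{f}{y}}$ says that \emph{every} element of $\semantics{y}$ is an $f$-successor of $\star$; even after forcing $\star$ to have no $f$-successors in $\semantics{T}$ via $(\all{(\someterm{f}{T})}{F})$, this term expresses a \emph{global} condition (roughly, $\semantics{y}\cap\semantics{T}=\emptyset$), not the local condition $\star\notin\semantics{y}$. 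Since $\langthree$ has no term complementation, there is no way to say ``$\star$ is not in $\semantics{y}$'' while permitting $\semantics{y}\neq\emptyset$, so your ``at least one'' clause sentence (the negated conjunction of the three negated literals, asserted of $T$) cannot be written down so that it constrains only the witness. Saying that one must ``design $\Gamma$ so that the defined-verb equivalences are forced at the witness regardless of the rest of the model'' is a statement of the problem, not a solution; as it stands the hard direction of the correctness claim is unproved.

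The paper's construction resolves exactly this difficulty by inverting the encoding: the truth value of a variable $U$ is the \emph{global} fact of whether $\semantics{u}$ is empty ($U$ true) or nonempty ($U$ false), so no negation or conjunction gadget is needed. ``At least one true per clause'' comes from a chain of six sentences per clause, alternating $(\all{\cdot}{(\allterm{r^i_c}{\cdot})})$ and $(\all{(\someterm{r^i_c}{\cdot})}{\cdot})$ from a noun $\startt$ to a noun $\finish$: whenever a noun's extension is nonempty, $\semantics{\allterm{r}{u}}\subseteq\semantics{\someterm{r}{u}}$, so if all three nouns are nonempty the chain forces $\semantics{\startt}\subseteq\semantics{\finish}$, contradicting $\Gamma\not\models\all{\startt}{\finish}$. ``At most one true'' comes from the sentence $(\all{(\allterm{r_{p,q}}{p})}{(\someterm{r'_{p,q}}{q})})$, which forces the whole domain to be empty if $\semantics{p}=\semantics{q}=\emptyset$. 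If you want to salvage your witness-type approach, you would essentially be pushed back to this global emptiness encoding anyway; I recommend reworking the reduction along those lines.
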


\begin{proof}
We use a reduction from the \emph{one-in-three positive $3$-SAT} problem
first studied by Schaefer~\cite{Schaefer}.
This problem is defined as follows.    We are given a set $\mathcal{S}$  of 
 clauses of the form $U\lor V\lor W$, where $U$, $V$, and $W$ are distinct.  (Note that negation is not used.)
The problem is to find a truth assignment $f$ to the variables making exactly one variable
in each clause $\true$ and the other two variables $\false$. 
 We call this a \emph{$1$-valued} assignment for ${\cal S}$.
This problem was shown to be $\NPTIME$ complete in Schaefer~\cite{Schaefer}.

We define a set
$\Gamma = \Gamma({\cal S})$ below, in  two steps.   We use
nouns which correspond to the variables of ${\cal S}$, writing $u$ for the noun corresponding with $U$,
etc. $\Gamma$ also uses a number of other nouns and verbs.  It is defined as follows:
\begin{enumerate}[(1)]
\item  For each clause $c\in {\cal S}$, say $c = U\lor V\lor W$,
put the following sentences in $\Gamma$:
\[
\begin{array}{ll}
\all{\startt}{(\allterm{r^1_c}{u})} &
\all{(\someterm{r^1_c}{u})}{y_c} \\
\all{y_c}{(\allterm{r^2_c}{v})} &
\all{(\someterm{r^2_c}{v})}{z_c} \\
\all{z_c}{(\allterm{r^3_c}{w})} &
\all{(\someterm{r^3_c}{w})}{\finish} \\
\end{array}
\]
Here $\startt$ and $\finish$ are new nouns (not varying with the clause),
  $y_c$ and $z_c$ are also new nouns (these do vary with $c$), and
$r^1_2$, $r^2_c$, and $r^3_c$ are new verbs.
\item  Let $P$ and $Q$ be any two distinct variables which occur together in
some clause $c$.   Then add to $\Gamma$ the sentence $\phi_{p,q}$:
\[
\all{(\allterm{r_{p,q}}{p})}{(\someterm{r_{p,q}'}{q})}.
\]
Here $r_{p,q}$ and $r'_{p,q}$ are new verbs.
(By symmetry, we also add $\phi_{q,p}$.)
\end{enumerate}
So if ${\cal S}$ has $k$ clauses, then the first point will add $2 + 2k$ new nouns
and $3k$ new verbs.   The second clause will add  at most
$2\cdot \binom{3k}{2} < 18 k^2$ new verbs.

\begin{claim}
${\cal S}$ has a
$1$-valued assignment iff $\Gamma\not\models \all{\startt}{\finish}$.
\end{claim}

\begin{proof} 
In one direction, assume that $\Model\models \Gamma$
and $\Model\not\models \all{\startt}{\finish}$.
   Define a truth assignment $f$
by $f(U) = \false$   iff $\semantics{u} \neq \emptyset$.
Consider a clause $c = U\lor V\lor W$ of ${\cal S}$.
If $f(U) = f(V) = f(W) = \false$,  then 
$\semantics{u}$, $\semantics{v}$, and $\semantics{w}$ are all non-empty.
By the sentences in (1), 
\[
\semantics{\startt} \subseteq  \semantics{y_c} \subseteq   \semantics{z_c} \subseteq  
\semantics{\finish}. 
\]
 But this contradicts that  $\Model\not\models \all{\startt}{\finish}$.  Thus we know that at least one 
variable in $c$ is assigned the value $\true$ by $f$.   We claim that only one variable can be $\true$.
For suppose towards a contradiction that (for example)  $U \neq V$ but $f(U) = f(V) = \true$.
Then $\semantics{u} = \semantics{v} = \emptyset$.   So $\semantics{\allterm{r_{p,q}}{u}} = M$
and $\semantics{\someterm{r'_{p,q}}{v}} = \emptyset$.   
By the sentence $\phi_{p,q}$  in point (2), $M$ is empty.  But this is impossible, since 
 $\Model\not\models  \all{\startt}{\finish}$.

Conversely, suppose $f$ is $1$-valued on ${\cal S}$.   We must find a model $\Model \models \Gamma$
where  $\Model\not\models \all{\startt}{\finish}$.
Let $M$ be the set of variables $U$ such that $f(U) = \false$, together with $\startt$ and $\finish$.
For a variable $X$, define $\semantics{x} = \emptyset$ if $f(X) = \true$, and
$\semantics{x} = \set{x}$ if $f(X) = \false$.   We also take $\semantics{\startt} = \set{\startt}$,
and $\semantics{\finish} = \set{\finish}$.

We still need to define $\semantics{y_c}$, $\semantics{z_c}$, 
$\semantics{r^1_c}$, $\semantics{r^2_c}$, $\semantics{r^2_c}$ for all clauses $c$,
and also $\semantics{r_{p,q}}$  and $\semantics{r'_{p,q}}$  when $P$ and $Q$ are distinct variables in the same clause.

Suppose that $P$ and $Q$ are distinct variables which happen to belong to 
the same clause.   
We must arrange that $\Model\models\phi_{p,q}$. Set $\semantics{r_{p,q}} = \emptyset$ and $\semantics{r'_{p,q}} = M\times M$. We know that either $f(P) = \false$ or $f(Q) = \false$ (or both).
In the first case, $\semantics{\allterm{r_{p,q}}{p}} = \emptyset$, so $\Model\models\phi_{p,q}$.
In the second case, $\semantics{\someterm{r'_{p,q}}{q}} = M$ so again $\Model\models\phi_{p,q}$.

Finally, we consider the sentences in (1).   There are three cases.

If $f(U) = \true$, $f(V) = \false$, and $f(W) = \false$,
then we have $\semantics{u} = \emptyset$, $\semantics{v} = \set{v}$,
and $\semantics{w} = \set{w}$.
We set 
$\semantics{y_c} = \emptyset$,  $\semantics{z_c} = \emptyset$, 
$\semantics{r^1_c} = M\times M$,  $\semantics{r^2_c} = \emptyset$,  and $\semantics{r^3_c} = \emptyset$.

If $f(U) = \false$, $f(V) = \true$, and $f(W) = \false$,
we have  $\semantics{u} = \set{u}$, $\semantics{v} = \emptyset$,
and $\semantics{w} = \set{w}$.
We set
 $\semantics{y_c} = M$,  $\semantics{z_c} = \emptyset$, 
$\semantics{r^1_c} = M\times M$,  $\semantics{r^2_c} = \emptyset$,  and $\semantics{r^3_c} = \emptyset$.

If $f(U) = \false$, $f(V) = \false$, and $f(W) = \true$, we have 
 $\semantics{u} = \set{u}$, $\semantics{v} = \set{v}$, and $\semantics{w} = \emptyset$.
We set
$\semantics{y_c} = M$,  $\semantics{z_c} = M$, 
$\semantics{r^1_c} = M\times M$,  $\semantics{r^2_c} = M\times M$,  and $\semantics{r^3_c} = \emptyset$.

In all cases, the resulting model $\Model$ satisfies all sentences in (1), hence all sentences in $\Gamma$.
And in all cases,  $\Model\not\models \all{\startt}{\finish}$.
\end{proof}

The claim concludes the proof Theorem~\ref{theorem-McA-G}.
\end{proof}

\subsection{Completeness and $\CONP$ decidability for $\langthreesecond$}
\label{section-completeness-langthreesecond}

We first present a sound and complete proof system for $\langthreesecond$, because it is actually a bit simpler than $\langthree$, and mirrors more closely our work from Section~\ref{sec-langtwo}.  The rules are in 
 Figure~\ref{fig-some-rules}. We return to $\langthree$ in Section~\ref{section-completeness-langthree} below. 
 
Since the consequence relations for  $\langthree$ and $\langthreesecond$ are $\CONP$ hard, by Theorem~\ref{theorem-ptime} we cannot hope for a
boundedly complete syllogistic proof system for these languages (unless {\sc P}$=${\sc NP}). 
We regard it as unlikely that they admit any sound and complete complete syllogistic proof system.
Instead, we settle for a proof system with (\casesrule) from Section~\ref{sec-cases}, as well as a variant, (\casesruleone). Figure~\ref{fig-some-rules} gives proof rules for this logic.

\begin{figure}[h]
\begin{mathframe}
\begin{gather*}
\infer[\rone]{\all{(\someterm{r}{x})}{(\someterm{r}{y})}}{\all{x}{y}} \qquad
\infer[\rtwo]{\all{(\allterm{r}{x})}{(\someterm{r}{y}})}{\some{x}{y}} \qquad
\infer[\rthree]{\some{y}{y}}{\some{x}{(\someterm{r}{y})}}\\\\
\infer[\casesrule]{\phi}{
\infer*{\phi}{\xcancel{\some{x}{x}}} & \infer*{\phi}{\xcancel{\all{y}{(\allterm{r}{x})}}}} \qquad
\infer[\casesruleone]{\phi}{\infer*{\phi}{\xcancel{\some{x}{x}}} & \infer*{\phi}{\xcancel{\all{x}{y}}}}
\end{gather*}
\caption{The logic of Section~\ref{section-completeness-langthreesecond}.
We also use the rules in Figure~\ref{baserules}.
   \label{fig-some-rules}}
\end{mathframe}
\end{figure}

In this section, we write $\proves$ for provability in the system with rules (\axiom), (\barbara), (\anti), (\someone), (\sometwo), (\darii), (\rone), (\rtwo), (\rthree), (\casesrule), and (\casesruleone). Given a theory $\Gamma$, we write $x\leq y$ when $\Gamma\proves \all{x}{y}$. We say that $\Gamma$ \emph{determines existentials} for a set of terms $\Terms$ if for every $x\in \Terms$, either $\Gamma\proves \some{x}{x}$, or else $\Gamma\proves \all{x}{y}$ and $\Gamma\proves\all{y}{(\allterm{r}{x})}$ for all terms $y\in \Terms$ and all verbs $r$.

\paragraph{The canonical model}
Let $\Gamma$ be a theory, and let $\Terms$ be a set of terms, closed under subterms as usual. Define 
\[
M = \set{\pair{x,y,Q}\in \Terms\times \Terms \times \set{\forall,\exists}: \Gamma\proves\some{x}{y}}.
\]
We define a model $\Model(\Gamma,\Terms)$ with domain $M$ by setting 
\begin{align*}
\pair{x,y,Q}\in \semantics{p} & \quadiff
 x\leq p \mbox{ or } y\leq p \\ 
    \pair{x_1, y_1, Q_1}
  \semantics{r} \pair{x_2,y_2, Q_2} & \quadiff \mbox{for some $z_1\in \set{x_1,y_1}$ and $z_2\in \set{x_2,y_2}$, either}\\
  &\qquad\quad \text{(a) }z_1\leq (\allterm{r}{z_2}), \text{or}\\
 & \qquad\quad \text{(b) }
 Q_2 = \exists, x_2 = y_2, \text{and } z_1\leq (\someterm{r}{z_2}).
\end{align*}

\begin{lemma}[Truth Lemma]
If $\Gamma$ determines existentials for $\Terms$, then in $\Model'(\Gamma,\Terms)$, for all $t\in \Terms$,  
\begin{equation}
\label{eq-lemma-determines-existentials-Truth-Lemma-redux2}
\semantics{t}  =    \set{\pair{x,y,Q}\in M: x\leq p\text{ or }y\leq p}.
\end{equation}
\label{lemma-determines-existentials-Truth-Lemma-redux}
\end{lemma}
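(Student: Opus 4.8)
The plan is to prove the displayed equality by induction on the structure of the term $t\in\Terms$. In $\langthreesecond$ a term is built from a noun by the two formers $(\allterm{r}{s})$ and $(\someterm{r}{s})$, and since $\Terms$ is closed under subterms, the immediate subterm $s$ of $t$ always lies in $\Terms$, so the induction hypothesis $\semantics{s}=R(s)$ is available, where I write $R(t)$ as a local abbreviation for the right-hand set $\set{\pair{x,y,Q}\in M : x\leq t\text{ or }y\leq t}$ (reading $t$ for the typographical $p$ in the display, and the model as $\Model(\Gamma,\Terms)$). For a noun $p$ the identity $\semantics{p}=R(p)$ is immediate from the definition of the model. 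I record once that $s\leq s$ by $(\axiom)$, so whenever $\pair{s,s,Q}\in M$ the induction hypothesis places it in $\semantics{s}$.

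For $t=(\allterm{r}{s})$, the inclusion $R(\allterm{r}{s})\subseteq\semantics{\allterm{r}{s}}$ is the familiar $(\anti)$-then-$(\barbara)$ argument: given $\pair{x,y,Q}$ with, say, $x\leq(\allterm{r}{s})$ and any $\pair{v,w,Q'}\in\semantics{s}$, the induction hypothesis supplies (say) $v\leq s$, whence $(\allterm{r}{s})\leq(\allterm{r}{v})$ by $(\anti)$ and $x\leq(\allterm{r}{v})$ by $(\barbara)$, which is clause (a). For the reverse inclusion I split on whether $\Gamma\proves\some{s}{s}$. If it does, the crucial move is to test membership against the $\forall$-labelled diagonal element $\pair{s,s,\forall}\in\semantics{s}$: because clause (b) requires $Q_2=\exists$, the relation $\pair{x,y,Q}\,\semantics{r}\,\pair{s,s,\forall}$ can only hold through clause (a), forcing $x\leq(\allterm{r}{s})$ or $y\leq(\allterm{r}{s})$. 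This is precisely what the $\forall$-copy in the third coordinate is for. If $\Gamma\not\proves\some{s}{s}$, then since $\Gamma$ determines existentials for $\Terms$ and $s\in\Terms$, the second disjunct of that definition yields $z\leq(\allterm{r}{s})$ for every $z\in\Terms$, and in particular $x\leq(\allterm{r}{s})$, so $\pair{x,y,Q}\in R(\allterm{r}{s})$ outright. This is the only place the determines-existentials hypothesis is used.

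For $t=(\someterm{r}{s})$, the forward inclusion $R(\someterm{r}{s})\subseteq\semantics{\someterm{r}{s}}$ needs a witness in $\semantics{s}$. Starting from $\pair{x,y,Q}\in M$ (so $\Gamma\proves\some{x}{y}$) with, say, $x\leq(\someterm{r}{s})$, I chain $(\sometwo)$, $(\darii)$, and $(\rthree)$ to derive $\Gamma\proves\some{s}{s}$; then $\pair{s,s,\exists}\in M$ and $\pair{s,s,\exists}\in\semantics{s}$, and $\pair{x,y,Q}\,\semantics{r}\,\pair{s,s,\exists}$ holds by clause (b), the $\exists$-copy being exactly what makes clause (b) available. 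The reverse inclusion is where the real work lies: given $\pair{x,y,Q}\in\semantics{\someterm{r}{s}}$, pick $\pair{v,w,Q'}\in\semantics{s}$ with $\pair{x,y,Q}\,\semantics{r}\,\pair{v,w,Q'}$; then (say) $v\leq s$ by the induction hypothesis, and $\Gamma\proves\some{v}{w}$ since $\pair{v,w,Q'}\in M$. If the relation holds by clause (b) I obtain $z_1\leq(\someterm{r}{v})$ for some $z_1\in\set{x,y}$, and $(\rone)$ applied to $v\leq s$ gives $(\someterm{r}{v})\leq(\someterm{r}{s})$, so $z_1\leq(\someterm{r}{s})$.

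I expect the main obstacle to be clause (a) of this last case, where I have only $z_1\leq(\allterm{r}{z_2})$ with $z_2\in\set{v,w}$ and must convert an $(\allterm{r}{-})$ term into $(\someterm{r}{s})$. The key tool is $(\rtwo)$, which turns an existence statement into exactly such an $(\allterm{r}{-})\leq(\someterm{r}{-})$ inequality. Using $\Gamma\proves\some{v}{w}$ together with $(\someone)$ or $(\sometwo)$ to reorder, I produce a sentence $\some{z_2}{z'}$ in which $z'\in\set{v,w}$ is whichever term the induction hypothesis certifies is $\leq s$; applying $(\rtwo)$ gives $(\allterm{r}{z_2})\leq(\someterm{r}{z'})$, and $(\rone)$ from $z'\leq s$ gives $(\someterm{r}{z'})\leq(\someterm{r}{s})$, so composing with $(\barbara)$ yields $z_1\leq(\someterm{r}{s})$ and hence $\pair{x,y,Q}\in R(\someterm{r}{s})$. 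The only care needed is in matching the four sub-possibilities of $z_2\in\set{v,w}$ against which of $v,w$ is $\leq s$, but each resolves by a single use of $(\rtwo)$ after the appropriate $(\someone)$/$(\sometwo)$ rearrangement, so the induction closes.
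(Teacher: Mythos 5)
Your proof is correct and follows essentially the same route as the paper's: structural induction, the $(\anti)$/$(\barbara)$ argument and the $\pair{s,s,\forall}$ test element (with the determines-existentials hypothesis covering $\Gamma\not\proves\some{s}{s}$) for $(\allterm{r}{s})$, and the $(\rthree)$/$\pair{s,s,\exists}$ witness plus the $(\rtwo)$--$(\rone)$ case split for $(\someterm{r}{s})$. The only cosmetic difference is in clause (a) of the last case, where the paper first derives $\some{d_j}{t}$ and applies $(\rtwo)$ directly to reach $(\someterm{r}{t})$, while you apply $(\rtwo)$ to $\some{z_2}{z'}$ and then bridge with $(\rone)$; both are equivalent one-line rearrangements.
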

\begin{proof}   
The proof is by induction on $t$.
For a noun $p$, this is by definition of the model.
 
\paragraph{The induction step for $(\allterm{r}{t})$}
Since $\Terms$ is closed under subterms, $t\in \Terms$. Take some $\pair{c_1,c_2,Q}$ such that $c_i\leq (\allterm{r}{t})$ for some $i\in\set{1,2}$.
We show that $\pair{c_1,c_2,Q}\in\semantics{\mbox{\sf $r$ all $t$}}$.
For this, suppose 
$\pair{d_1,d_2,Q'}\in\semantics{t}$.
By induction hypothesis, $d_j\leq t$ for some $j\in\set{1,2}$.
Using (\anti), 
 $(\allterm{r}{t})\leq (\allterm{r}{d_j})$, so $c_i\leq (\allterm{r}{d_j})$. Thus,
$\pair{c_1,c_2,Q}\semantics{r}\pair{d_1,d_2,Q'}$.
 Since $\pair{d_1,d_2,Q'}$ was arbitrary, we have shown that 
 $\pair{c_1,c_2,Q}\in \semantics{\allterm{r}{t}}$, as desired.
 
 In the other direction, suppose that  $\pair{c_1,c_2,Q}\in \semantics{\mbox{\sf $r$ all $t$}}$. If $\Gamma\proves \some{t}{t}$, then $\pair{t,t,\forall}\in M$. By induction, since $t\leq t$, $\pair{t,t,\forall}\in \semantics{t}$, so  $\pair{c_1,c_2,Q}\semantics{r}\pair{t,t,\forall}$.  Since the last component in $\pair{t,t,\forall}$ is $\forall$ rather than $\exists$, case (a) in the definition of $\semantics{r}$ holds, and $c_i\leq (\allterm{r}{t})$ for some $i\in \set{1,2}$.
 
 It remains to consider the case when $\Gamma\not\proves\some{t}{t}$. But since $\Gamma$ determines existentials, $\Gamma\proves \all{y}{(\allterm{r}{t})}$ for all terms $y$, and in particular $c_1\leq (\allterm{r}{t})$.

\paragraph{The induction step for $(\someterm{r}{t})$}
Again, since $\Terms$ is closed under subterms, $t\in \Terms$. Take some $\pair{c_1,c_2,Q}$ such that $c_i\leq (\someterm{r}{t})$. The fact that $\pair{c_1,c_2,Q}\in M$ implies that $\Gamma\proves\some{c_1}{c_2}$. By (\sometwo) and (\someone), $\Gamma\proves\some{c_i}{c_i}$, and by (\darii), $\Gamma\proves \some{c_i}{(\someterm{r}{t})}$. But then 
by ({\rthree}),  $\Gamma\proves\some{t}{t}$, and hence $\pair{t,t,\exists}\in M$. By case (b) in the definition of $\semantics{r}$, $\pair{c_1,c_2,Q}\semantics{r}\pair{t,t,\exists}$. By induction $\pair{t,t,\exists}\in \semantics{t}$, so $\pair{c_1,c_2,Q}\in \semantics{\someterm{r}{t}}$. 

In the other direction, suppose that $\pair{c_1,c_2,Q} \in \semantics{\someterm{r}{t}}$.  Then we have
$\pair{c_1,c_2,Q} \semantics{r}\pair{d_1,d_2,Q'}$ for some
$\pair{d_1,d_2,Q'}\in\semantics{t}$. Since $\pair{d_1,d_2,Q'}\in M$,
$\Gamma\proves\some{d_1}{d_2}$. And also $d_k\leq t$ for some $k\in \set{1,2}$, by induction. 

We first consider case (a) in
the definition of $\semantics{r}$: there are $i$ and $j$ so
that $c_i\leq (\allterm{r}{d_j})$. From $\Gamma\proves\some{d_1}{d_2}$ and $d_k\leq t$, using (\darii), (\someone), and (\sometwo),  $\Gamma\proves \some{d_j}{t}$. By ({\rtwo}), $(\allterm{r}{d_j})\leq (\someterm{r}{t})$, so by (\barbara), $c_i\leq (\someterm{r}{t})$. 

In case (b),   
 $Q' = \exists$,  $d_1 = d_2$,
 and for some $i$, $c_i\leq (\someterm{r}{d_1})$. By (\rone), $(\someterm{r}{d_1})\leq (\someterm{r}{t})$, so $c_i\leq (\someterm{r}{t})$.
\end{proof}

 \begin{lemma}
 Suppose that $\Terms$ contains all subterms of sentences in $\Gamma$ and $\Gamma^*\supseteq \Gamma$ is a theory which determines existentials for $\Terms$. Then $\Model(\Gamma^*,\Terms)\models \Gamma$. \label{lemma-Gamma-Delta-redux}
\end{lemma}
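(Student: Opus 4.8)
The plan is to deduce this as a routine consequence of the Truth Lemma (Lemma~\ref{lemma-determines-existentials-Truth-Lemma-redux}), exactly in the spirit of the analogous consequences drawn in Lemma~\ref{first-canonical-model} and in Lemma~\ref{lemma-determines-existentials-Truth-Lemma}. Since $\Gamma^*$ determines existentials for $\Terms$ by hypothesis, the Truth Lemma applies to the model $\Model(\Gamma^*,\Terms)$, giving, for every $t\in \Terms$,
\[
\semantics{t} = \set{\pair{x,y,Q}\in M : \Gamma^*\proves \all{x}{t}\text{ or }\Gamma^*\proves\all{y}{t}},
\]
where $M$ is the domain of $\Model(\Gamma^*,\Terms)$, i.e.\ those triples $\pair{x,y,Q}$ with $x,y\in\Terms$ and $\Gamma^*\proves\some{x}{y}$. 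To show $\Model(\Gamma^*,\Terms)\models\Gamma$ it then suffices to check that each sentence of $\Gamma$ is satisfied, splitting into the two possible forms $(\all{u}{v})$ and $(\some{u}{v})$. In both cases, the hypothesis that $\Terms$ contains all subterms of sentences in $\Gamma$ guarantees $u,v\in\Terms$, so the Truth Lemma is applicable to $u$ and to $v$.

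For a sentence $(\all{u}{v})\in\Gamma$, I would verify $\semantics{u}\subseteq\semantics{v}$ directly. Take $\pair{x,y,Q}\in\semantics{u}$; by the Truth Lemma, without loss of generality $\Gamma^*\proves\all{x}{u}$. Since $(\all{u}{v})\in\Gamma\subseteq\Gamma^*$, we have $\Gamma^*\proves\all{u}{v}$, so $(\barbara)$ gives $\Gamma^*\proves\all{x}{v}$. By the Truth Lemma again, $\pair{x,y,Q}\in\semantics{v}$. Hence $\Model(\Gamma^*,\Terms)\models\all{u}{v}$.

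For a sentence $(\some{u}{v})\in\Gamma$, I would exhibit a witness. Since $(\some{u}{v})\in\Gamma\subseteq\Gamma^*$, we have $\Gamma^*\proves\some{u}{v}$, and $u,v\in\Terms$, so $\pair{u,v,\forall}\in M$. Because $\Gamma^*\proves\all{u}{u}$ and $\Gamma^*\proves\all{v}{v}$ by $(\axiom)$, the Truth Lemma yields $\pair{u,v,\forall}\in\semantics{u}\cap\semantics{v}$, so this intersection is nonempty and $\Model(\Gamma^*,\Terms)\models\some{u}{v}$. I do not expect any genuine obstacle here: the Truth Lemma carries all the weight, and the argument is the verbatim analogue of the corresponding steps in Section~\ref{sec-langtwo}. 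The only points that require care are bookkeeping ones, namely invoking subterm-closure of $\Terms$ to know $u,v\in\Terms$, and checking domain membership $\pair{u,v,\forall}\in M$ in the existential case, both of which follow immediately from the stated hypotheses.
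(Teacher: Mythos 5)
Your proposal is correct and follows essentially the same route as the paper: both arguments reduce everything to the Truth Lemma (Lemma~\ref{lemma-determines-existentials-Truth-Lemma-redux}), handling $(\all{u}{v})$ via $(\barbara)$ and the two directions of the Truth Lemma, and $(\some{u}{v})$ by exhibiting the witness $\pair{u,v,\forall}\in M$. The extra bookkeeping you flag (subterm-closure of $\Terms$ and domain membership) is exactly what the paper's proof uses implicitly, so there is nothing to add.
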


\begin{proof}    
For a sentence $(\all{x}{y})\in \Gamma$, we have $\Gamma^*\proves \all{x}{y}$. 
We are going to apply Lemma~\ref{lemma-determines-existentials-Truth-Lemma-redux} to $\Model(\Gamma^*,\Terms)$ (that is, the $\leq$ symbol here is for provability in $\Gamma^*$).
  If $\pair{c_1,c_2,Q}\in \semantics{x}$  in $\Model(\Gamma^*,\Terms)$, then $c_i\leq x$ for some $i\in \set{1,2}$. But then also $c_i\leq y$, so $\pair{c_1,c_2,Q}\in \semantics{y}$.

For a sentence $(\some{x}{y})\in \Gamma$, we have $\Gamma^*\proves \some{x}{y}$, so $\pair{x,y,\forall}\in M$. And $\pair{x,y,\forall}\in \semantics{x}\cap \semantics{y}$ in $\Model(\Gamma^*,\Terms)$.
\end{proof}

\begin{theorem}
$\Gamma\models\phi$ iff $\Gamma\proves\phi$.
\label{theorem-completeness-langthreesecond}
\end{theorem}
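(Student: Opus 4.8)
The plan is to mirror the completeness argument of Theorem~\ref{theorem-completeness-cases}, but using the canonical model $\Model(\Gamma,\Terms)$ and the Truth Lemma (Lemma~\ref{lemma-determines-existentials-Truth-Lemma-redux}) of this section, which already handle the new term former $(\someterm{r}{x})$. The soundness direction, that $\Gamma\proves\phi$ implies $\Gamma\models\phi$, is routine: I would check that each of $(\rone)$, $(\rtwo)$, $(\rthree)$ preserves truth directly from the semantics of $(\someterm{r}{x})$, and that $(\casesruleone)$ is sound by the empty-extension argument of Lemma~\ref{lemma-soundness-cases}, namely that in any $\Model\models\Gamma$ either $\semantics{x}\neq\emptyset$, so $\Model\models\some{x}{x}$, or $\semantics{x}=\emptyset$, in which case $\semantics{x}\subseteq\semantics{y}$ and $\Model\models\all{x}{y}$.

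For completeness I would prove the contrapositive. Assume $\Gamma\not\proves\phi$, fix a subterm-closed $\Terms$ containing all subterms of sentences in $\Gamma\cup\set{\phi}$, and use Zorn's Lemma to pass to a maximal $\Gamma^*\supseteq\Gamma$ with $\Gamma^*\not\proves\phi$. The crucial step is to show that $\Gamma^*$ determines existentials for $\Terms$ in the sense of this section. Suppose not: then some $x\in\Terms$ has $\Gamma^*\not\proves\some{x}{x}$, and the second clause of the definition also fails, so either $\Gamma^*\not\proves\all{x}{y}$ for some $y\in\Terms$, or $\Gamma^*\not\proves\all{y}{(\allterm{r}{x})}$ for some $y\in\Terms$ and verb $r$. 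In the first case maximality gives $\Gamma^*\cup\set{\some{x}{x}}\proves\phi$ and $\Gamma^*\cup\set{\all{x}{y}}\proves\phi$, so $(\casesruleone)$ yields $\Gamma^*\proves\phi$, a contradiction; in the second case the analogous argument with $(\casesrule)$ gives the same contradiction. This is exactly where both withdrawal rules are needed, and I expect it to be the main obstacle, since the two-part definition of ``determines existentials'' must be matched precisely against the two case rules.

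Having established that $\Gamma^*$ determines existentials, Lemma~\ref{lemma-Gamma-Delta-redux} gives $\Model(\Gamma^*,\Terms)\models\Gamma$, so it remains to verify $\Model(\Gamma^*,\Terms)\not\models\phi$ using the Truth Lemma (reading $\leq$ as $\Gamma^*$-provability). If $\phi=\some{x}{y}$ and the model satisfied it, some $\pair{c_1,c_2,Q}\in\semantics{x}\cap\semantics{y}$ would give $c_i\leq x$ and $c_j\leq y$ together with $\Gamma^*\proves\some{c_1}{c_2}$, and then $(\someone)$, $(\sometwo)$, $(\darii)$ would derive $\Gamma^*\proves\some{x}{y}$, contradicting $\Gamma^*\not\proves\phi$. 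If $\phi=\all{x}{y}$, then since $\Gamma^*$ determines existentials, either $\Gamma^*\proves\all{x}{z}$ for all $z\in\Terms$, whence $\Gamma^*\proves\all{x}{y}$ outright, which is impossible; or $\Gamma^*\proves\some{x}{x}$, so $\pair{x,x,\forall}\in\semantics{x}$ by the Truth Lemma, and if $\semantics{x}\subseteq\semantics{y}$ held we would get $\pair{x,x,\forall}\in\semantics{y}$, forcing $x\leq y$ and again contradicting $\Gamma^*\not\proves\phi$. Thus $\Model(\Gamma^*,\Terms)$ is a model of $\Gamma$ refuting $\phi$, so $\Gamma\not\models\phi$, completing the proof.
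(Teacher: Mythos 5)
Your proposal is correct and follows essentially the same route as the paper: soundness of the case rules via the empty-interpretation dichotomy, a maximal extension $\Gamma^*\not\proves\phi$ shown to determine existentials via $(\casesrule)$ and $(\casesruleone)$, and then the canonical model with its Truth Lemma refuting $\phi$ by the same two-case analysis on the form of $\phi$. The only difference is that you spell out explicitly the matching of the two clauses of ``determines existentials'' against the two case rules, which the paper leaves to the reader by citing the proof of Theorem~\ref{theorem-completeness-cases}.
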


\begin{proof}  The soundness is easy, with soundness of (\casesruleone) following just as in the proof of Lemma~\ref{lemma-soundness-cases}. 

The argument for completeness is the same as we saw in the proof of Theorem~\ref{theorem-completeness-cases}.
Let $\Terms$ be the set of all subterms of sentences in $\Gamma\cup \set{\phi}$. We assume that $\Gamma\not\proves\phi$ and show that
  $\Gamma\not\models\phi$.
We may  pass to a maximal extension $\Gamma^*\supseteq \Gamma$ with the property that $\Gamma^*\not\proves\phi$.
 It follows from $(\casesrule)$ and $(\casesruleone)$ that $\Gamma^*$ determines existentials for $\Terms$, just as in the proof of Theorem~\ref{theorem-completeness-cases}.    By Lemma~\ref{lemma-Gamma-Delta-redux},
  $\Model(\Gamma^*,\Terms)\models\Gamma$.   We claim that $\phi$ is false in this model.
  
Case 1: $\phi$ is $(\all{x}{y})$. Since $\Gamma^*$ determines existentials for $\Terms$ and $\Gamma^*\not\proves \all{x}{y}$, we have $\Gamma^*\proves\some{x}{x}$. So $\pair{x,x,\forall}\in M$. By Lemma~\ref{lemma-Gamma-Delta-redux}, $\pair{x,x,\forall}\in \semantics{x}\setminus \semantics{y}$, since $x\leq x$ but $x\not\leq y$. 

Case 2: $\phi$ is $(\some{x}{y})$. Suppose towards a contradiction 
that $\Model(\Gamma^*,\Terms)\models\phi$.  Specifically, let
 $\pair{d_1,d_2,Q}\in \semantics{x} \cap \semantics{y}$.
Then $\Gamma^*\proves\some{d_1}{d_2}$, 
and also there are $i$ and $j$ such that 
 $d_i\leq x$ and $d_j\leq y$. Using (\someone), (\sometwo), and (\darii), $\Gamma^*\proves\some{x}{y}$.
\end{proof}

The proof shows that if $\Gamma\not\proves \phi$, then there is a countermodel of size $O(n^2)$, where $n$ is the complexity of $\Gamma\cup \set{\phi}$. Since $\Model\models \Gamma$ and $\Model \not\models \phi$ can be checked in time polynomial in the size of $\Model$ and the complexity of $\Gamma\cup \set{\phi}$, this shows that the consequence relation for $\langthreesecond$ is in $\CONP$.

\subsection{Completeness for $\langthree$}
\label{section-completeness-langthree}

In $\langthree$, we do not have the sentence former $(\some{x}{y})$, which was used in the previous section to formulate the condition that $\Gamma$ determines existentials. Nevertheless, we are able to follow the same strategy as for $\langthreesecond$ to prove a completeness theorem. The key observation is that for a term $x$ and a nonempty model $\Model$, $\semantics{x}\neq \emptyset$ in $\Model$ if and only if $\Model \models \all{(\allterm{r}{x})}{(\someterm{r}{x})}$ for every verb $r$. We use the collection of all sentences of this form as a replacement for $(\some{x}{x})$.

Figure~\ref{fig-3-rules} gives our proof system for this logic. 
The soundness of (\rone) is immediate. For (\mixrule), note that if $\Nodel\models \all{(\allterm{r}{y})}{(\someterm{r}{y})}$, then either $N = \emptyset$, or $\semantics{y} \neq \emptyset$.
If $N=\emptyset$, the conclusion of the rule holds.   And if $\semantics{y} \neq \emptyset$
and $\Nodel\models\all{y}{(\someterm{r}{x})}$, then $\semantics{x} \neq \emptyset$ also.
Again, the conclusion of the rule follows. 
The soundness of the (\casesrule) variants follow as in 
Lemma~\ref{lemma-soundness-cases}, using the above observation about sentences of the form $\all{(\allterm{r}{y})}{(\someterm{r}{y})}$.

Note that in (\mixrule), the verb $s$ appearing in the conclusion may be different than the verb $r$ appearing in the premises. And in (\casesrulethree), the withdrawn premises are $(\all{(\allterm{r}{x})}{(\someterm{r}{x})})$ and $(\all{y}{(\allterm{s}{x})})$, where the verbs $r$ and $s$ may be different. 

As usual, we write $\proves$ for provability in this system, and given a theory $\Gamma$, we write $x\leq y$ when $\Gamma\proves \all{x}{y}$.

\begin{figure}[h]
\begin{mathframe}
\begin{gather*}
\infer[\rone]{\all{(\someterm{r}{x})}{(\someterm{r}{y})}}{\all{x}{y}} \qquad
\infer[\mixrule]{\all{(\allterm{s}{x})}{(\someterm{s}{x})}}{\all{(\allterm{r}{y})}{(\someterm{r}{y})} & \all{y}{(\someterm{r}{x})}}
\\
\\
\infer[\casesrulethree]{\phi}{
\infer*{\phi}{\xcancel{\all{(\allterm{r}{x})}{(\someterm{r}{x})}}}
&
\infer*{\phi}{\xcancel{\all{y}{(\allterm{s}{x})}}}
}
 \qquad
\infer[\casesruletwo]{\phi}{
\infer*{\phi}{\xcancel{\all{(\allterm{r}{x})}{(\someterm{r}{x})}}}
&
\infer*{\phi}{\xcancel{\all{x}{y}}}
}
\end{gather*}
\caption{The logic of Section~\ref{section-completeness-langthree}.
 We also use   ({\axiom}), (\barbara), (\anti)  from Figure~\ref{baserules}.
   \label{fig-3-rules}}
\end{mathframe}
\end{figure}

\begin{definition} 
Let $\Gamma$ be a theory and $\Terms$ a set of terms. A term $x\in \Terms$ is \emph{effectively non-empty (for $\Gamma$)} if for all verbs $r$,
$\Gamma\proves \all{(\allterm{r}{x})}{(\someterm{r}{x})}$.
And $x$ is \emph{effectively empty (for $\Gamma$)} if for all $y\in\Terms$,  $\Gamma\proves \all{x}{y}$ and $\Gamma\proves \all{y}{(\allterm{r}{x})}$ for all  verbs $r$.

$\Gamma$ \emph{effectively determines existentials for $\Terms$} if
every $x\in\Terms$ is either effectively empty or effectively non-empty for $\Gamma$.
\label{def-determines-existentials-3}
\end{definition}
 
 \paragraph{The canonical model}
Let $\Gamma$ be a theory, and let $\Terms$ be a set of terms. Define 
\[
M = \set{\pair{x,Q} \in \Terms\times \set{\forall,\exists} :  
\mbox{$x$ is effectively non-empty for $\Gamma$}}
\]
We define a model $\Model(\Gamma,\Terms)$ with domain $M$ by setting 
\begin{align*}
\pair{x,Q}\in \semantics{p} & \quadiff
 x\leq p \\ 
    \pair{x, Q}
  \semantics{r} \pair{y,Q'} & \quadiff \mbox{either (a) $x\leq (\allterm{r}{y})$,} \\
& \qquad \quad \mbox{or (b) $Q' = \exists$, and $x\leq {(\someterm{r}{y})}$.}
\end{align*}

\begin{lemma}[Truth Lemma] \label{yet-another-truth-lemma} Assume that $\Gamma$ effectively determines existentials for $\Terms$.
Then for all $x\in\Terms$, 
\[
\semantics{x} = \set{\pair{y,Q}\in M :y \leq x}.
\]
\end{lemma}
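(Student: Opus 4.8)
The plan is to induct on the term $x$, following the template of the Truth Lemma for $\langthreesecond$ (Lemma~\ref{lemma-determines-existentials-Truth-Lemma-redux}), but replacing every appeal to a $\some$-sentence by the corresponding statement about effective non-emptiness. For a noun $p$, the claim $\pair{y,Q}\in\semantics{p} \quadiff y\leq p$ is immediate from the definition of $\semantics{p}$. Since $\Terms$ is closed under subterms, in the two inductive cases $(\allterm{r}{t})$ and $(\someterm{r}{t})$ we have $t\in\Terms$, so the induction hypothesis is available for $t$.

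For $(\allterm{r}{t})$, the $\supseteq$ inclusion goes exactly as before: given $\pair{c,Q}$ with $c\leq \allterm{r}{t}$ and any $\pair{d,Q'}\in\semantics{t}$, the induction hypothesis gives $d\leq t$, so $(\anti)$ and $(\barbara)$ yield $c\leq \allterm{r}{d}$, which is case (a) of the definition of $\semantics{r}$; hence $\pair{c,Q}\in\semantics{\allterm{r}{t}}$. For $\subseteq$, suppose $\pair{c,Q}\in\semantics{\allterm{r}{t}}$. Here I would split on whether $t$ is effectively empty or effectively non-empty, which is exhaustive because $\Gamma$ effectively determines existentials for $\Terms$. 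If $t$ is effectively non-empty, then $\pair{t,\forall}\in M$, and $\pair{t,\forall}\in\semantics{t}$ by induction (using $t\leq t$ from $(\axiom)$); thus $\pair{c,Q}\semantics{r}\pair{t,\forall}$, and since the $\forall$ tag rules out case (b), case (a) gives $c\leq \allterm{r}{t}$. If $t$ is effectively empty, then $c\leq \allterm{r}{t}$ holds directly by the definition of effective emptiness (note $c\in\Terms$).

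For $(\someterm{r}{t})$ the $\langthree$-specific rules do the work. For $\supseteq$, suppose $\pair{c,Q}\in M$ with $c\leq \someterm{r}{t}$; the key is to show $t$ is effectively non-empty. Since $\pair{c,Q}\in M$, the term $c$ is effectively non-empty, so $\Gamma\proves\all{(\allterm{r}{c})}{(\someterm{r}{c})}$; combined with $c\leq \someterm{r}{t}$, the rule (\mixrule) (with $y=c$ and $x=t$) yields $\all{(\allterm{s}{t})}{(\someterm{s}{t})}$ for every verb $s$, i.e.\ $t$ is effectively non-empty. Hence $\pair{t,\exists}\in M$, and $\pair{t,\exists}\in\semantics{t}$ by induction; case (b) of the definition (with $Q'=\exists$ and $c\leq \someterm{r}{t}$) gives $\pair{c,Q}\semantics{r}\pair{t,\exists}$, so $\pair{c,Q}\in\semantics{\someterm{r}{t}}$. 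For $\subseteq$, suppose $\pair{c,Q}\in\semantics{\someterm{r}{t}}$, witnessed by some $\pair{d,Q'}\in\semantics{t}$ with $\pair{c,Q}\semantics{r}\pair{d,Q'}$; induction gives $d\leq t$, and $d$ is effectively non-empty since $\pair{d,Q'}\in M$. In case (b), $c\leq \someterm{r}{d}$, and $(\rone)$ applied to $d\leq t$ gives $\someterm{r}{d}\leq \someterm{r}{t}$, so $(\barbara)$ finishes. In case (a), $c\leq \allterm{r}{d}$; here effective non-emptiness of $d$ gives $\allterm{r}{d}\leq \someterm{r}{d}$, after which $(\rone)$ and $(\barbara)$ again produce $c\leq \someterm{r}{t}$.

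I expect the main obstacle to be the $(\someterm{r}{t})$ step, and specifically organizing the two directions so that effective non-emptiness is propagated correctly. In the $\supseteq$ direction one must manufacture the membership $\pair{t,\exists}\in M$ out of thin air using (\mixrule), and in case (a) of the $\subseteq$ direction one must remember to invoke the effective non-emptiness of the witness $d$ (via $\allterm{r}{d}\leq \someterm{r}{d}$) in order to bridge from an $\allterm$ premise to the desired $\someterm$ conclusion. These are precisely the two places where the absence of the sentence former $(\some{x}{y})$ forces a genuinely different argument from the $\langthreesecond$ case, so I would write them out in full detail while keeping the $(\allterm{r}{t})$ case and the noun base case brief.
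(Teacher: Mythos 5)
Your proof is correct and follows essentially the same inductive argument as the paper's: the $(\allterm{r}{t})$ step splits on effective emptiness versus effective non-emptiness of $t$, and the $(\someterm{r}{t})$ step uses $(\mixrule)$ to propagate effective non-emptiness from $c$ to $t$ in one direction and $(\rone)$ together with $\allterm{r}{d}\leq\someterm{r}{d}$ in the other. The only (harmless, and arguably cleaner) difference is that in the $\subseteq$ direction of the $\someterm$ case you split on which clause of the definition of $\semantics{r}$ holds, whereas the paper splits on the tag $Q'$.
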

\begin{proof} 
By induction on $x$.  When $x$ is a noun, this follows immediately from the definition. 

Here is the induction step for $(\allterm{r}{x})$. 
Suppose that $\pair{y,Q}\in \semantics{\allterm{r}{x}}$.  
If $x$ is effectively empty, then 
 ${y} \leq (\allterm{r}{x})$, and so we are done.    
If $x$ is effectively non-empty, then $\pair{x,\forall}\in M$. Since $x\leq x$, by induction $\pair{x,\forall}\in \semantics{X}$, and $\pair{y,Q}\semantics{r}\pair{x,\forall}$. Then case (a) holds and again $y\leq (\allterm{r}{x})$. 

In the other direction, suppose that $y\leq ( \allterm{r}{x})$.
We show that for any $\pair{y,Q}\in M$, $\pair{y,Q}\semantics{r}\pair{z,Q'}$ for all
$\pair{z,Q'}\in\semantics{x}$.  By induction, $z \leq x$.  
Using (\anti), $ ( \allterm{r}{x}) \leq  (\allterm{r}{z})$.
 Thus, $y\leq  ( \allterm{r}{z})$, and so indeed $\pair{y,Q}\semantics{r}\pair{z,Q'}$.

Finally, we have the induction step for $(\someterm{r}{x})$.  
Suppose that $\pair{y,Q}\in \semantics{\someterm{r}{x}}$.  Then there is some $\pair{z,Q'}\in\semantics{x}$ such that 
$\pair{y,Q}\semantics{r}\pair{z,Q'}$.   So $z$ is effectively non-empty, and by induction $z\leq x$. 

Case 1: $Q'=\exists$. Then $y\leq (\someterm{r}{z})$. By (\rone), $(\someterm{r}{z})\leq (\someterm{r}{x})$, so also $y\leq (\someterm{r}{x})$, as was to be shown. 

Case 2: $Q'=\forall$. Then $y\leq (\allterm{r}{x})$. By (\anti), $(\allterm{r}{x})\leq (\allterm{r}{z})$, so $y\leq (\allterm{r}{z})$. The fact that $z$ is effectively non-empty means that $(\allterm{r}{z})\leq (\someterm{r}{z})$. And by (\rone), as observed above, $(\someterm{r}{z})\leq (\someterm{r}{x})$. So again we have $y\leq (\someterm{r}{x})$.

In the other direction, let 
$y\in\Terms$ be such that $y\leq (\someterm{r}{x})$. Suppose that $\pair{y,Q}\in M$.
Then $y$ is effectively non-empty. In particular, $(\allterm{r}{y})\leq (\someterm{r}{y})$, and by (\mixrule), for every verb $s$, $(\allterm{s}{x})\leq (\someterm{s}{x})$, so $x$ is effectively non-empty. Then $\pair{x,\exists}\in M$. By case (b), $\pair{y,Q}\semantics{r}\pair{x,\exists}$, and by induction $\pair{x,\exists}\in \semantics{x}$, so $\pair{y,Q}\in \semantics{\someterm{r}{x}}$.
\end{proof}

\begin{lemma}
Suppose $\Terms$ contains all subterms of sentences in $\Gamma$ and $\Gamma^*\supseteq \Gamma$ is a theory which effectively determines existentials for $\Terms$. Then $\Model(\Gamma^*,\Terms)\models \Gamma$. 
\label{langthreemodel}
\end{lemma}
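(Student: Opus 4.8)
The plan is to follow the pattern of Lemma~\ref{lemma-Gamma-Delta-redux} from the $\langthreesecond$ section, but to exploit the fact that the proof here is genuinely simpler. The key observation is that in $\langthree$ the only sentence former is $(\all{x}{y})$, so \emph{every} sentence in $\Gamma$ has the form $(\all{x}{y})$. There is no existential sentence to handle, which means the whole argument reduces to a single verification of universal sentences, and the third ($\exists$/$\forall$) component of the domain elements plays no special role.

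First I would fix a sentence $(\all{x}{y})\in\Gamma$ and record the two facts that make the Truth Lemma applicable. Since $\Terms$ contains all subterms of sentences in $\Gamma$, we have $x,y\in\Terms$. Since $\Gamma\subseteq\Gamma^*$, we have $\Gamma^*\proves\all{x}{y}$, i.e.\ $x\leq y$, where throughout this proof $\leq$ denotes provability in $\Gamma^*$ (this is the sense in which the Truth Lemma should be read when applied to $\Model(\Gamma^*,\Terms)$). Crucially, $\Gamma^*$ effectively determines existentials for $\Terms$ by hypothesis, so Lemma~\ref{yet-another-truth-lemma} does apply to $\Model(\Gamma^*,\Terms)$.

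Next I would simply chase the Truth Lemma through. Take any $\pair{z,Q}\in\semantics{x}$ in $\Model(\Gamma^*,\Terms)$. By Lemma~\ref{yet-another-truth-lemma}, $z\leq x$. Combined with $x\leq y$, the rule $(\barbara)$ gives $z\leq y$. Applying Lemma~\ref{yet-another-truth-lemma} once more yields $\pair{z,Q}\in\semantics{y}$. Since $\pair{z,Q}$ was arbitrary, $\semantics{x}\subseteq\semantics{y}$, that is, $\Model(\Gamma^*,\Terms)\models\all{x}{y}$. As $(\all{x}{y})$ was an arbitrary sentence of $\Gamma$, this gives $\Model(\Gamma^*,\Terms)\models\Gamma$.

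I do not expect any real obstacle here: unlike the $\langthreesecond$ case, there is no $(\some{x}{y})$ clause requiring us to produce a witness element of $\semantics{x}\cap\semantics{y}$ in the model. The only points demanding care are bookkeeping ones, namely confirming that $x,y\in\Terms$ so that the Truth Lemma is even stated for them, and being explicit that the hypothesis ``$\Gamma^*$ effectively determines existentials for $\Terms$'' is exactly what licenses the use of Lemma~\ref{yet-another-truth-lemma} for $\Model(\Gamma^*,\Terms)$.
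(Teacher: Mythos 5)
Your proposal is correct and follows essentially the same route as the paper's proof: fix $(\all{x}{y})\in\Gamma$, get $x\leq y$ from $\Gamma\subseteq\Gamma^*$, and use the Truth Lemma (Lemma~\ref{yet-another-truth-lemma}) twice together with $(\barbara)$ to conclude $\semantics{x}\subseteq\semantics{y}$. The paper's version is just terser; your added remarks that $x,y\in\Terms$ and that there is no existential case to handle are accurate bookkeeping, not a different argument.
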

\begin{proof}
For a sentence $(\all{x}{y})\in \Gamma$, we have $\Gamma^*\proves \all{x}{y}$, so $x\leq y$. If $\pair{z,Q}\in \semantics{x}$ in $\Model(\Gamma^*,\Terms)$, then $z\leq x$. But then also $z\leq y$, so $\pair{z,Q}\in \semantics{y}$. 
\end{proof}

\begin{theorem}
$\Gamma\models\phi$ iff $\Gamma\proves\phi$.
\label{theorem-completeness-langthree}
\end{theorem}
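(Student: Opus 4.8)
The plan is to mirror the completeness argument for $\langthreesecond$ (Theorem~\ref{theorem-completeness-langthreesecond}), adapting it to the absence of the sentence former $(\some{x}{y})$ by replacing ``$\Gamma$ determines existentials'' with ``$\Gamma$ effectively determines existentials'' (Definition~\ref{def-determines-existentials-3}), and relying on the canonical model and Truth Lemma (Lemma~\ref{yet-another-truth-lemma}) already established above. Soundness is routine: the base rules $(\axiom)$, $(\barbara)$, $(\anti)$ are sound, $(\rone)$ and $(\mixrule)$ were checked in the discussion preceding Figure~\ref{fig-3-rules}, and soundness of the two $(\casesrule)$ variants follows exactly as in Lemma~\ref{lemma-soundness-cases}, using the observation that (in a nonempty model) $\Model\models \all{(\allterm{r}{x})}{(\someterm{r}{x})}$ for every verb $r$ precisely when $\semantics{x}\neq\emptyset$.

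For completeness I would prove the contrapositive: assuming $\Gamma\not\proves\phi$, I build a model of $\Gamma$ refuting $\phi$. Since $\langthree$ has only the sentence former $(\all{x}{y})$, the sentence $\phi$ has the form $(\all{x}{y})$, so, unlike in the $\langthreesecond$ proof, there is a single case to handle. Let $\Terms$ be the set of all subterms of sentences in $\Gamma\cup\set{\phi}$, and by Zorn's Lemma pass to a maximal $\Gamma^*\supseteq\Gamma$ with $\Gamma^*\not\proves\phi$, exactly as in Theorem~\ref{theorem-completeness-cases}.

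The crux, and the step I expect to be the main obstacle, is to show that $\Gamma^*$ effectively determines existentials for $\Terms$; this is where the two case rules do their work, and the delicate part is matching their withdrawn premises to the correct verb instances. Fix $x\in\Terms$ and suppose $x$ is not effectively non-empty, so there is a verb $r$ with $\Gamma^*\not\proves\all{(\allterm{r}{x})}{(\someterm{r}{x})}$; by maximality, $\Gamma^*\cup\set{\all{(\allterm{r}{x})}{(\someterm{r}{x})}}\proves\phi$. Now for any $y\in\Terms$, if $\Gamma^*\not\proves\all{x}{y}$ then $\Gamma^*\cup\set{\all{x}{y}}\proves\phi$ by maximality, and applying $(\casesruletwo)$ (withdrawing $\all{(\allterm{r}{x})}{(\someterm{r}{x})}$ in the first branch and $\all{x}{y}$ in the second) yields the contradiction $\Gamma^*\proves\phi$; hence $\Gamma^*\proves\all{x}{y}$. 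Likewise, for any $y\in\Terms$ and verb $s$, if $\Gamma^*\not\proves\all{y}{(\allterm{s}{x})}$ then $\Gamma^*\cup\set{\all{y}{(\allterm{s}{x})}}\proves\phi$, and $(\casesrulethree)$, withdrawing the same $\all{(\allterm{r}{x})}{(\someterm{r}{x})}$ together with $\all{y}{(\allterm{s}{x})}$ (with $r$ and $s$ allowed to differ), again gives $\Gamma^*\proves\phi$; hence $\Gamma^*\proves\all{y}{(\allterm{s}{x})}$. Together these show $x$ is effectively empty, so $\Gamma^*$ effectively determines existentials.

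Finally I would close the argument as follows. By Lemma~\ref{langthreemodel}, $\Model(\Gamma^*,\Terms)\models\Gamma$. Writing $\phi=(\all{x}{y})$, note that $x,y\in\Terms$ and $\Gamma^*\not\proves\all{x}{y}$, i.e.\ $x\not\leq y$. Since $\Gamma^*$ effectively determines existentials, $x$ cannot be effectively empty (that would force $\Gamma^*\proves\all{x}{y}$ for this very $y\in\Terms$), so $x$ is effectively non-empty and $\pair{x,\forall}\in M$. By the Truth Lemma (Lemma~\ref{yet-another-truth-lemma}), $\pair{x,\forall}\in\semantics{x}$ because $x\leq x$, while $\pair{x,\forall}\notin\semantics{y}$ because $x\not\leq y$. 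Thus $\semantics{x}\not\subseteq\semantics{y}$, so $\Model(\Gamma^*,\Terms)\not\models\phi$, witnessing $\Gamma\not\models\phi$ and completing the proof.
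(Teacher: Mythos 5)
Your proposal is correct and follows essentially the same route as the paper's proof: maximalize $\Gamma^*$ via Zorn's Lemma, use $(\casesruletwo)$ and $(\casesrulethree)$ to show $\Gamma^*$ effectively determines existentials, then apply Lemma~\ref{langthreemodel} and the Truth Lemma to exhibit $\pair{x,\forall}\in\semantics{x}\setminus\semantics{y}$. The only difference is that you spell out the case-rule argument that the paper delegates to the proof of Theorem~\ref{theorem-completeness-cases}, and your details there are accurate.
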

\begin{proof}
We discussed the soundness when we introduced the rules.

 For completeness, let $\Terms$ be the set of all subterms of sentences in $\Gamma\cup \set{\phi}$. We assume that $\Gamma\not\proves\phi$ and show that
  $\Gamma\not\models\phi$. 
We may  pass to a maximal extension $\Gamma^*\supseteq \Gamma$ with the property that $\Gamma^*\not\proves\phi$.
 It follows from $(\casesrulethree)$ and $(\casesruletwo)$ that $\Gamma^*$ effectively determines existentials for $\Terms$, just as in the proof of Theorem~\ref{theorem-completeness-cases}.    By Lemma~\ref{langthreemodel},
  $\Model(\Gamma^*,\Terms)\models\Gamma$.  
  
   We claim that $\phi$ is false in this model.
Write  $\phi$ as $(\all{x}{y})$. Since $\Gamma^*$ effectively determines existentials for $\Terms$ and $\Gamma^*\not\proves \all{x}{y}$, $x$ is effectively nonempty for $\Gamma^*$. So $\pair{x,\forall}\in M$. By Lemma~\ref{yet-another-truth-lemma}, $\pair{x,\forall}\in \semantics{x}\setminus \semantics{y}$, since $x\leq x$ but $x\not\leq y$. 
 \end{proof}
 
 The proof shows that if $\Gamma\not\proves \phi$, then there is a countermodel of size $O(n)$, where $n$ is the complexity of $\Gamma\cup \set{\phi}$.

\section{$\langfour$ and $\langfoursecond$: Adding term complementation to $\langone$ and $\langtwo$}
\label{section-term-negation}

This section extends $\langone$ and $\langtwo$ by adding term complements. 
That is, we extend the syntax of $\lang$ so that whenever $t$ is a term, $\notterm{t}$ is a term, and we extend the semantics so that in a model $\Model$,  $\semantics{\notterm{t}} = \Model\setminus \semantics{t}$. If we add term complements to $\langone$, we get $\langfour$, and if we add term complements to $\langtwo$, we get $\langfoursecond$.

\subsection{$\CONP$ hardness of the consequence relation of $\langfour$}\label{3sat}

We begin with a negative complexity-theoretic result, reducing $3$-SAT to the relation $\Gamma\not\models\varphi$ in $\langfour$.

Let $BV = \set{P_i : i \in N}$ be a set of boolean variables.
Suppose we have an instance of $3$-SAT, $c_1\land \dots\land c_k$, where each clause $c_i$ has the form $U_i \lor V_i\lor W_i$, where $U_i$, $V_i$ and $W_i$ are literals: variables in $BV$ or their negations. 

Then we consider the language with nouns 
\[
\set{p : P\in BV} \cup \set{q} \cup \set{y_i,z_i: 1\leq i\leq k}
\]
 and verbs $\set{r_i: 1\leq i\leq k}$. 
Notice that we write $p$ for the noun corresponding to the variable $P$. We will also write $u$ for the literal $U$, where if $U$ is a variable $P$, then $u$ is the noun $p$, and if $U$ is a negated variable $\lnot P$, then $u$ is the term $\notterm{p}$. 

For each clause $c_i = U_i\lor V_i\lor W_i$, we define the following sentences:
\begin{align*}
\psi^i_1 &= \all{\notterm{u_i}}{(\allterm{r_i}{y_i})} \\
\psi^i_2 &= \all{\notterm{v_i}}{(\allterm{r_i}{\notterm{y_i}})}\\
\psi^i_3 &= \all{(\allterm{r_i}{z_i})}{w_i}
\end{align*}
And we define 
\begin{align*}
\Gamma &= \set{\psi^i_1,\psi^i_2,\psi^i_3: 1\leq i\leq k}\\
\varphi &= \all{q}{\notterm{q}}
\end{align*}

\begin{lemma}\label{nonempty}
$\Gamma\not\models \varphi$ if and only if $\Gamma$ has a nonempty model.
\end{lemma}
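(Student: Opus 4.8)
The plan is to prove both directions by unwinding the semantics of the target sentence $\varphi = \all{q}{\notterm{q}}$ and exploiting the fact that the noun $q$ does not occur anywhere in $\Gamma$. First I would record the meaning of $\varphi$: for any model $\Model$ with domain $M$, we have $\Model\models \all{q}{\notterm{q}}$ if and only if $\semantics{q}\subseteq M\setminus \semantics{q}$, which holds if and only if $\semantics{q} = \emptyset$. Hence $\Model\not\models\varphi$ holds exactly when $\semantics{q}\neq\emptyset$. Note in particular that the empty model always satisfies $\varphi$, which is precisely why nonemptiness is the relevant condition on the right-hand side of the biconditional.

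For the forward direction, suppose $\Gamma\not\models\varphi$, and fix a model $\Model\models\Gamma$ with $\Model\not\models\varphi$. By the observation above, $\semantics{q}\neq\emptyset$; since $\semantics{q}\subseteq M$, this forces $M\neq\emptyset$. Thus $\Model$ is itself a nonempty model of $\Gamma$, as required, and no further construction is needed.

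For the converse, suppose $\Gamma$ has a nonempty model $\Model$ with domain $M\neq\emptyset$. The crucial point, which I would verify by inspecting the sentences $\psi^i_1$, $\psi^i_2$, and $\psi^i_3$, is that the noun $q$ does not appear in any sentence of $\Gamma$; it occurs only in $\varphi$. Indeed, the literals $u_i$, $v_i$, $w_i$ are built from boolean variables, and the only other nouns used are the $y_i$ and $z_i$. Therefore I can define a new model $\Model'$ with the same domain $M$ and the same interpretation as $\Model$ for every noun and verb, except that I set $\semantics{q} = M$. Since neither $q$ nor any term built from it occurs in $\Gamma$, the satisfaction of each sentence of $\Gamma$ is unaffected by this change, so $\Model'\models\Gamma$. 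But now $\semantics{q} = M\neq\emptyset$, so $\Model'\not\models\varphi$, witnessing $\Gamma\not\models\varphi$.

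Once the meaning of $\varphi$ is unwound, the argument is genuinely routine and there is no substantive obstacle. The only point requiring any care is the bookkeeping observation that $q$ is a fresh noun absent from $\Gamma$, which is what lets us decouple the question of whether $\Gamma$ has a nonempty model from the question of whether $\varphi$ can be refuted. This separation is exactly the design purpose of $\varphi$: it serves as a uniform device asserting ``the model is nonempty,'' so that the refutability of $\varphi$ reduces to the satisfiability of $\Gamma$ by a nonempty structure.
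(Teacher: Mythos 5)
Your proof is correct and follows essentially the same route as the paper: observe that $\varphi$ holds exactly when $\semantics{q}=\emptyset$, and use the fact that $q$ does not occur in $\Gamma$ to reinterpret $\semantics{q}$ as all of $M$ in a given nonempty model. The only cosmetic difference is that the paper phrases the forward direction contrapositively, whereas you argue both directions directly.
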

\begin{proof}
Suppose $\Gamma$ has a model $\Model$ with nonempty domain $M$. 
Let $\Model'$ be a model with domain $M$ and the same interpretations of all of the nouns and verbs, 
except for $q$, which we interpret as all of $M$. Since the sentences in $\Gamma$ do not mention $q$, 
we still have $\Model' \models \Gamma$. And for any $x\in M$, we have $x\in \semantics{q} = M$ and 
$x\notin \semantics{\notterm{q}} = \emptyset$, so $\Gamma\not\models \varphi$. Conversely, suppose every model of $\Gamma$ is empty. 
Then in any model $\Model$ of $\Gamma$, we have $\semantics{q} = \emptyset \subseteq \semantics{\notterm{q}} = \emptyset$, so $\Gamma\models \varphi$.
\end{proof}

\begin{theorem}
$\Gamma\not\models \varphi$ if and only if $c_1\land \dots \land c_k$ is satisfiable.
\end{theorem}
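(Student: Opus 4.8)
The plan is to combine this statement with Lemma~\ref{nonempty}, which reduces the claim to showing that $\Gamma$ has a nonempty model if and only if $c_1 \land \dots \land c_k$ is satisfiable. Both directions pass explicitly between models and truth assignments, and the point that makes everything work is that term complementation is interpreted as genuine set complementation: for \emph{any} element $m$ of a model and any literal $U$, we have $m \in \semantics{u}$ exactly when $U$ is made true by the assignment ``$P$ is true iff $m \in \semantics{p}$''. This correspondence holds uniformly for positive literals ($u = p$) and negative literals ($u = \notterm{p}$), precisely because $\semantics{\notterm{p}} = M \setminus \semantics{p}$.

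For the direction ``model $\Rightarrow$ assignment'', the key step is a forcing lemma: in every model $\Model \models \Gamma$ and for every clause $c_i$, we have $\semantics{u_i} \cup \semantics{v_i} \cup \semantics{w_i} = M$. I would prove this by contradiction. If some $m$ lies outside all three, then $m \in \semantics{\notterm{u_i}}$ and $m \in \semantics{\notterm{v_i}}$, so $\psi^i_1$ and $\psi^i_2$ force $m \semantics{r_i} n$ for every $n \in \semantics{y_i}$ and for every $n \in \semantics{\notterm{y_i}}$, respectively. Since $\semantics{y_i}$ and $\semantics{\notterm{y_i}}$ partition $M$, this gives $m \semantics{r_i} n$ for all $n \in M$, hence $m \in \semantics{\allterm{r_i}{z_i}}$; then $\psi^i_3$ forces $m \in \semantics{w_i}$, contradicting $m \notin \semantics{w_i}$. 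Given this lemma, if $\Gamma$ has a nonempty model I fix any $m_0 \in M$ and set $f(P) = \true$ iff $m_0 \in \semantics{p}$. By the forcing lemma each clause contains a literal $\ell$ with $m_0 \in \semantics{\ell}$, and by the uniform correspondence that literal is true under $f$, so $f$ satisfies every clause.

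For the direction ``assignment $\Rightarrow$ model'', I would build a one-point model $M = \set{*}$ from a satisfying assignment $f$, putting $\semantics{p} = \set{*}$ iff $f(P) = \true$ (so $\semantics{u} = \set{*}$ iff the literal $U$ is true, for both polarities), and taking $\semantics{z_i} = \set{*}$ for all $i$. The verb $r_i$ and the term $y_i$ are chosen clause by clause according to which literal satisfies $c_i$: if $W_i$ is true, put $* \semantics{r_i} *$ and $\semantics{y_i} = \emptyset$; if $W_i$ is false (so $U_i$ or $V_i$ is true, since $f$ satisfies $c_i$), leave $\semantics{r_i}$ empty, and set $\semantics{y_i} = \set{*}$ when $U_i$ is true and $\semantics{y_i} = \emptyset$ otherwise. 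A short case check confirms $\psi^i_1$, $\psi^i_2$, and $\psi^i_3$ in each case, so this is a nonempty model of $\Gamma$.

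I expect the main obstacle to be the ``model $\Rightarrow$ assignment'' direction, and specifically resisting the tempting but wrong definition $f(P) = \true$ iff $\semantics{p} \neq \emptyset$. That definition fails for negative literals in models with more than one element, since $\semantics{\notterm{p}} \neq \emptyset$ only says $\semantics{p} \neq M$, not $\semantics{p} = \emptyset$. Evaluating everything at a single fixed element $m_0$ is exactly what makes membership correspond to literal-truth uniformly across polarities, and the forcing lemma $\semantics{u_i} \cup \semantics{v_i} \cup \semantics{w_i} = M$ is what guarantees that this one element witnesses at least one literal in every clause simultaneously.
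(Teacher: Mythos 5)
Your proposal is correct and follows essentially the same route as the paper: both directions reduce via Lemma~\ref{nonempty} to nonemptiness of a model, the ``model $\Rightarrow$ assignment'' direction rests on the same chain $\semantics{\notterm{u_i}}\cap\semantics{\notterm{v_i}}\subseteq\semantics{\allterm{r_i}{y_i}}\cap\semantics{\allterm{r_i}{\notterm{y_i}}}\subseteq\semantics{\allterm{r_i}{z_i}}\subseteq\semantics{w_i}$ evaluated at a single fixed element (your ``forcing lemma'' is just this inclusion stated contrapositively), and the converse uses the same one-point model with the same clause-by-clause choice of $\semantics{r_i}$ and $\semantics{y_i}$.
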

\begin{proof}
Suppose $\Gamma\not\models \varphi$. By Lemma~\ref{nonempty}, $\Gamma$ has a model $\Model$ with nonempty domain $M$. Let $x\in M$. Define a truth assignment $f$ for the proposition letters, by 
\[ 
f(P) = \begin{cases} \true &\text{if }x\in \semantics{p}\\ 
\false &\text{if }x\notin \semantics{p}.\end{cases}
\]
The truth assignment extends to literals in the natural way: $f(\lnot P) = \true$ if $f(P) = \false$, and $f(\lnot P) = \false$ if $f(P) = \true$. Note that if $U$ is a literal with corresponding term $u$, then we have $x\in \semantics{u}$ if and only if $f(U) = \true$. 

We check that each clause $c_i = U_i\lor V_i \lor W_i$  is satisfied. If $f(U_i) = \true$ or $f(V_i) = \true$, then $c_i$ is satisfied. So suppose that $f(U_i) = f(V_i) = \false$. Then, since $\semantics{z_i}\subseteq \semantics{y_i}\cup \semantics{\notterm{y_i}}$,
\[
x\in \semantics{\notterm{u_i}} \cap \semantics{\notterm{v_i}} \subseteq 
\semantics{\allterm{r_i}{y_i}} \cap \semantics{\allterm{r_i}{\notterm{y_i}}}\subseteq 
\semantics{\allterm{r_i}{z_i}} \subseteq  \semantics{w_i}.
\]
So $x\in \semantics{w_i}$, and $f(w_i) = \true$, and $c_i$ is satisfied. 

Conversely, suppose $f$ is a truth assignment such that $c_1\land\dots \land c_k$ is satisfied. By Lemma~\ref{nonempty}, it suffices to build a nonempty model of $\Gamma$. Let 
$\Model = \set{x}$, and for each proposition letter $P$, set 
\[
\semantics{p} = \begin{cases} \set{x} & \text{if }f(P) = \true\\ 
\emptyset & \text{if }f(P) = \false
\end{cases}
\]

Note that again, if $U$ is a literal with corresponding term $u$, we have $x\in \semantics{u}$ if and only if $f(U) = \true$. 

Consider a clause $c_i = U_i\lor V_i\lor W_i$. We must define the interpretations of $y_i$, $z_i$, and $r_i$, so that $\psi^i_1$, $\psi^i_2$, and $\psi^i_3$ are satisfied for $1\leq i \leq k$. 

Case 1: $x\in \semantics{w_i}$. Set $\semantics{r_i} = \set{(x,x)}$. The interpretations of $y_i$ and $z_i$ are irrelevant. Indeed, $\psi^i_1$, $\psi^i_2$, and $\psi^i_3$ are satisfied, since $\semantics{\allterm{r_i}{y_i}} = \semantics{\allterm{r_i}{\notterm{y_i}}} = \semantics{w_i} = M$.

Case 2: $x\notin \semantics{w_i}$. Set $\semantics{z_i} = \set{x}$ and $\semantics{r_i} = \emptyset$. Then $\psi^i_3$ is satisfied, since $\semantics{\allterm{r_i}{z_i}} = \semantics{w_i} = \emptyset$. Since the clause $c_i = U_i \lor V_i\lor W_i$ is satisfied, $x$ must be in at least one of $\semantics{u_i}$ or $\semantics{v_i}$. If it's in both, we're done (and the interpretation of $y_i$ is irrelevant), since $\semantics{\notterm{u_i}} = \semantics{\notterm{v_i}} = \emptyset$. Otherwise, set 
\[\semantics{y_i} = \begin{cases} \emptyset & \text{if }x\notin \semantics{u_i}, x\in \semantics{v_i}\\ \set{x} & \text{if }x\in \semantics{u_i}, x\notin \semantics{v_i}.\end{cases}\]
In the first case, $\Model\models \psi^i_1$, since $\semantics{\allterm{r_i}{y_i}} = M$, and $\Model \models \psi^i_2$, since $\semantics{\notterm{v_i}} = \emptyset$. In the second case, $\Model \models \psi^i_2$, since $\semantics{\notterm{u_i}} = \emptyset$, and $\Model \models \psi^i_2$, since $\semantics{\allterm{r_i}{\notterm{y_i}}} = M$.
\end{proof}

  \subsection{Completeness for the extended languages $\langfour^+$ and $\langfoursecond^+$}
 \label{section-sequent-calclulus}

 We enlarge our syntax of $\langfoursecond$ from sentences $(\all{x}{y})$ and $(\some{x}{y})$ to 
expressions of  the form $[x_1, \dots, x_n]$ and $\pair{x_1,\dots,x_n}$ for $n\geq 1$. Note that this is a departure from the languages we have studied previously, since here we have infinitely many sentence formers, of arbitrary finite length.  We emphasize that $[x_1,\ldots, x_n]$ and $\pair{x_1,\dots,x_n}$ are sentences, not terms. The terms of $\langfoursecond^+$ are the same as the terms of $\langfoursecond$. 

 The  semantics of this new language $\langfoursecond^+$ is: 
 \begin{align*}
 \Model\models [x_1, \ldots, x_n] &\quadiff 
\bigcap_{i=1}^n \semantics{x_i} = \emptyset\\
\Model\models\pair{x_1,\ldots, x_n}
& \quadiff
\bigcap_{i=1}^n \semantics{x_i} \neq \emptyset.
\end{align*}

It is clear that $\pair{x_1,\dots,x_n}$ generalizes $(\some{x}{y})$, since the latter sentence can be translated into $\langfoursecond^+$ as $\pair{x,y}$. To see that $[x_1,\dots,x_n]$ generalizes $(\all{x}{y})$, note that $\bigcap_{i=1}^n\semantics{x_i} = \emptyset$ if and only if for some $j$ (equivalently, for all $j$), $\bigcap_{i\neq j}\semantics{x_i}\subseteq \semantics{\notterm{x_j}}$. So the sentence $(\all{x}{y})$ can be translated into $\langfoursecond^+$ as $[x,\notterm{y}]$.

 Our  proof rules are shown in Figure~\ref{fig-sequents}.
  (\axiom)  is a version of the axiom rule as we have seen it throughout the paper.
   (\res) is named for \emph{resolution}.  (But please note that the sentence $[x_1, \ldots, x_n]$ is not interpreted disjunctively as in resolution.) 
 The name $(\rel)$ stands for \emph{relational}, since it is the only rule of the system that mentions relations.
 We name (\structural) after \emph{structural rules} of sequent calculi. 
  The side condition on this rule is that each $x_i$ appears in the list $y_1,\ldots, y_n$.
  It implies the usual rules of weakening, contraction, and exchange.
   (\efq) and (\raa)  are our formulations of \emph{ex falso quodlibet} and \emph{reductio ad absurdum}.  In this system, $[y_1,\ldots, y_n]$ and $\pair{y_1,\ldots, y_n}$ are contradictories.
   Given two derivations with contradictory conclusions, one may use (\efq) to put these two together and conclude any sentence of the form $[x_1,\dots,x_n]$. Alternatively, one may use (\raa) to withdraw
   all occurrences of any one assumption of the form $[x_1,\dots,x_n]$, and conclude the contradictory of that assumption, $\pair{x_1,\dots,x_n}$. Note the asymmetry between  $[x_1,\dots,x_n]$ and $\pair{x_1,\dots,x_n}$; this is arranged to as to allow an easy proof-theoretic argument (Corollary~\ref{cor:langfourcompleteness}) that (\axiom), (\res), (\rel), and (\structural) give a sound and complete proof system for the smaller language $\langfour^+$, which only has sentences of the form $[x_1,\dots,x_n]$.

  \begin{figure}[t]
\begin{mathframe}
 \begin{gather*}
 \infer[\axiom]{[x,\notterm{x}]}{}  \qquad
\infer[\res]{[y_1, \ldots, y_n,z_1,\ldots, z_m]}{[x, y_1, \ldots, y_n] & [\notterm{x}, z_1, \ldots,z_m]}
\qquad
 \infer[\rel]{[\allterm{r}{x_1}, \allterm{r}{x_2}, \dots, \allterm{r}{x_{n-1}}, \notterm{\allterm{r}{x_n}}]}{[\notterm{x_1}, \notterm{x_2}, \dots, \notterm{x_{n-1}}, x_n]}\\\\
 \infer[\structural]{[y_1,\ldots, y_n]}{[x_1,\ldots, x_m]} 
 \qquad
 \infer[\efq]{[x_1,\ldots, x_n]}{\pair{y_1,\ldots, y_m}  & [y_1,\ldots,y_m]}
 \qquad
 \infer[\raa]{\pair{x_1,\ldots, x_n}}{\infer*{\pair{y_1,\ldots, y_m}}{\xcancel{[x_1,\ldots, x_n]}}  & \infer*{[y_1,\ldots,y_m]}{\xcancel{[x_1,\ldots, x_n]}} }
 \end{gather*}
 \caption{Rules of the logical system for $\langfoursecond^+$.  The side condition on
 the structural rule (\structural) is that each $x_i$ must be identical to some $y_j$.
 \label{fig-sequents}}
\end{mathframe}
\end{figure}

 \begin{example}
 Here is how the translations of (\barbara) and  (\someone)  are derived in this system:
 \[
 \infer[\res]{[x,\notterm{z}]}
  {
  \infer[\structural]{[\notterm{y}, x]}{[x,\notterm{y}]}  &  [y,\notterm{z}]
  }
  \qquad
  \infer[\raa]{\pair{x,x}}{\pair{x,y} &  \infer[\structural]{[x,y]}{\xcancel{[x,x]}}}
  \]
 And here are   (\darii) and (\anti):
\[
\infer[\raa]{\pair{x,z}}{
\pair{x,y}
&
\infer[\res]{[x,y]}
{
 \infer[\structural]{[z,x]}{\xcancel{[x,z]}}
 &
 \infer[\structural]{[\notterm{z},y]}{[y,\notterm{z}]}
 }
 }
 \qquad
 \qquad
 \infer[\rel]{[\allterm{r}{y}, \notterm{\allterm{r}{x}}]}{
 \infer[\structural]{[\notterm{y},x]}{[x,\notterm{y}]} 
 }
  \]
 
 \end{example}

 \begin{lemma}[Soundness] If $\Gamma\proves \phi$, then $\Gamma\models\phi$.
 \label{lemma-soundness-sequent}
 \end{lemma}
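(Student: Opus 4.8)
The plan is to imitate the soundness argument for (\casesrule) in Lemma~\ref{lemma-soundness-cases}, inducting on the number of occurrences of the sole assumption-withdrawing rule, (\raa), in a derivation. The other five rules—(\axiom), (\res), (\rel), (\structural), and (\efq)—are \emph{locally sound}, meaning that whenever a model $\Model$ satisfies every premise of an instance, it satisfies the conclusion. Once local soundness is checked, the base case (no uses of (\raa)) follows by a routine sub-induction on the height of the proof tree: the leaves lie in $\Gamma$, hence are true in any $\Model\models\Gamma$, and each rule propagates truth to the root.

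For the local checks the first fact I would record is that $[x_1,\dots,x_n]$ and $\pair{x_1,\dots,x_n}$ are genuine contradictories: in any model exactly one holds, since one asserts $\bigcap_i\semantics{x_i}=\emptyset$ and the other its negation. Then (\axiom) is sound because $\semantics{x}\cap\semantics{\notterm{x}}=\emptyset$ always; (\structural) is sound because its side condition gives $\set{x_1,\dots,x_m}\subseteq\set{y_1,\dots,y_n}$, whence $\bigcap_j\semantics{y_j}\subseteq\bigcap_i\semantics{x_i}=\emptyset$; and (\efq) is vacuously sound, since its two premises are contradictory and no model satisfies both. I would verify (\res) by a case split on whether a putative witness $a\in\bigcap_i\semantics{y_i}\cap\bigcap_j\semantics{z_j}$ lies in $\semantics{x}$ or not, each case contradicting one premise. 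For (\rel) I would rewrite the premise $[\notterm{x_1},\dots,\notterm{x_{n-1}},x_n]$ as the inclusion $\semantics{x_n}\subseteq\bigcup_{i<n}\semantics{x_i}$ and the conclusion as $\bigcap_{i<n}\semantics{\allterm{r}{x_i}}\subseteq\semantics{\allterm{r}{x_n}}$; the claim then follows directly from the semantics of $\allterm{r}{x}$, because any element $r$-related to all of each $\semantics{x_i}$ for $i<n$ is $r$-related to every point of $\semantics{x_n}$.

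For the inductive step I would assume the bottommost rule is (\raa) (which can always be arranged) and that its two immediate subderivations contain strictly fewer copies of (\raa). They witness $\Gamma\cup\set{[x_1,\dots,x_n]}\proves\pair{y_1,\dots,y_m}$ and $\Gamma\cup\set{[x_1,\dots,x_n]}\proves[y_1,\dots,y_m]$, so by the induction hypothesis both entailments hold semantically. Now fix $\Model\models\Gamma$ and suppose for contradiction that $\Model\not\models\pair{x_1,\dots,x_n}$; then $\bigcap_i\semantics{x_i}=\emptyset$, so $\Model\models[x_1,\dots,x_n]$ and hence $\Model\models\Gamma\cup\set{[x_1,\dots,x_n]}$. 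The two entailments force $\Model$ to satisfy both $\pair{y_1,\dots,y_m}$ and $[y_1,\dots,y_m]$, which are contradictory—impossible. Therefore $\Model\models\pair{x_1,\dots,x_n}$, finishing the step.

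The only genuinely delicate point is the (\raa) case, and it is delicate exactly because (\raa) withdraws assumptions: a single local truth-preservation check is not available, and one must instead argue by contradiction against the withdrawn assumption, precisely as the induction on the number of (\casesrule) applications does in Lemma~\ref{lemma-soundness-cases}. The asymmetry built into (\raa)—that it may only withdraw assumptions of the bracket form $[x_1,\dots,x_n]$—is what makes this contrapositive argument go through cleanly, and it is also what lets one later drop (\raa) and still retain a sound proof system for the bracket-only fragment $\langfour^+$.
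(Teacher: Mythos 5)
Your proof is correct and follows essentially the same route as the paper: local soundness checks for (\axiom), (\structural), (\res), (\rel), and (\efq), followed by an induction on the number of uses of (\raa) in which one shows the theory extended by the withdrawn assumption $[x_1,\dots,x_n]$ has no models. Your verification of (\rel) via the inclusion $\semantics{x_n}\subseteq\bigcup_{i<n}\semantics{x_i}$ is just the contrapositive of the paper's computation with complements and intersections, so there is no substantive difference.
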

 
 \begin{proof}
The soundness of (\axiom) and (\structural) are clear.  
 For (\res), (\rel), and (\efq), fix a model $\Model$. 
 
 For (\res), assume that $\semantics{x}\cap (\bigcap_{i\leq n} \semantics{y_i}) = \emptyset$ and $\semantics{\notterm{x}}\cap (\bigcap_{j\leq m}\semantics{z_j}) = \emptyset$. Suppose there is some $\ell\in (\bigcap_{i\leq n} \semantics{y_i})\cap  (\bigcap_{j\leq m}\semantics{z_j})$. Then either $\ell\in \semantics{x}$ or $\ell\in \semantics{\notterm{x}}$, which is a contradiction in either case.

 For (\rel), assume that $(\bigcap_{i<n} \semantics{\notterm{x_i}})\cap \semantics{x_n} = \emptyset$, so $\bigcap_{i<n} \semantics{\notterm{x_i}} \subseteq \semantics{\notterm{x_n}}$.
 Suppose there is some $\ell\in (\bigcap_{i<n} \semantics{\allterm{r}{x_i}})\cap \semantics{\notterm{\allterm{r}{x_n}}}$.
 Then there is some $m\in \semantics{x_n}$ such that it is not true that $\ell \semantics{r} m$.
 For all $i<n$, since $\ell\in  \semantics{\allterm{r}{x_i}}$, $m\in\semantics{\notterm{x_i}}$.  But then $m\in \semantics{\notterm{x_n}}$, and this is a contradiction.

 For (\efq), it is vacuously true that if $\Model \models \pair{y_1,\dots,y_m}$ and $\Model \models [y_1,\dots,y_m]$, then $\Model \models [x_1,\dots,x_n]$, since $\pair{y_1,\dots,y_m}$ and $[y_1,\dots,y_m]$ are contradictory. 
 
 The soundness of (\raa) is by induction on the number of instances of (\raa) in the proof tree. Assume that the last use of (\raa) is at the root of the proof tree showing $\Gamma\proves \pair{x_1,\dots,x_n}$. Then we have $\Gamma\cup \set{[x_1,\dots,x_n]}\proves \pair{y_1,\dots,y_m}$ and $\Gamma\cup \set{[x_1,\dots,x_n]}\proves [y_1,\dots,y_m]$. By induction, these deductions are sound, so every model of $\Gamma\cup \set{[x_1,\dots,x_n]}$ satisfies both $\pair{y_1,\dots,y_m}$ and $[y_1,\dots,y_m]$. But these sentences are contradictory, so $\Gamma\cup \set{[x_1,\dots,x_n]}$ has no models. In other words, every model of $\Gamma$ satisfies $\pair{x_1,\dots,x_n}$, as was to be shown.
 \end{proof}

 \begin{definition}  Let $\Gamma$ be a theory, and let $\SS$ be a set of terms.
 \begin{enumerate}
 \item $\Gamma$ is \emph{inconsistent} if it proves both $[x_1,\dots,x_n]$ and $\pair{x_1,\dots,x_n}$ for some list of terms $x_1,\dots,x_n$. Otherwise, $\Gamma$ is \emph{consistent}.
 \item 
  $\SS$ is \emph{$\Gamma$-inconsistent} if there is a list of terms $x_1,\dots,x_n$ from $\SS$
  such that $\Gamma\proves [x_1, \ldots, x_n]$. Otherwise, 
 $\SS$ is \emph{$\Gamma$-consistent}.
 \end{enumerate}
 \end{definition}
 
 \begin{lemma}
 Let $\SS$ be $\Gamma$-consistent.  Then for all $x$, either $\SS\cup\set{x}$ or  $\SS\cup\set{\notterm{x}}$ is  $\Gamma$-consistent.
 \label{lemma-sequent-consistency}
 \end{lemma}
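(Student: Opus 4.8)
The plan is to argue by contradiction and let resolution do all the work, since $(\res)$ is precisely the cut rule that eliminates a complementary pair $x, \notterm{x}$. So I would assume that \emph{both} $\SS\cup\set{x}$ and $\SS\cup\set{\notterm{x}}$ are $\Gamma$-inconsistent, and from this derive that $\SS$ itself is $\Gamma$-inconsistent, contradicting the hypothesis.

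First I would unwind the two assumed inconsistencies into a usable normal form. Since $\SS\cup\set{x}$ is $\Gamma$-inconsistent, there is a list of terms from $\SS\cup\set{x}$ whose bracket sentence is provable from $\Gamma$. If $x$ did not occur in that list, then the list would be drawn entirely from $\SS$, already contradicting the $\Gamma$-consistency of $\SS$; so I may assume $x$ occurs. Using the structural rule $(\structural)$, which yields contraction and exchange (put $x$ first, merge duplicate occurrences, and leave the remaining terms of $\SS$), I would normalize this to $\Gamma\proves [x, y_1,\dots,y_n]$ with every $y_i\in\SS$. Symmetrically, $\Gamma\proves [\notterm{x}, z_1,\dots,z_m]$ with every $z_j\in\SS$.

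The heart of the argument is then a single application of $(\res)$ to these two conclusions, cutting on the complementary pair $x,\notterm{x}$; this yields $\Gamma\proves [y_1,\dots,y_n,z_1,\dots,z_m]$, a list of terms drawn entirely from $\SS$, which contradicts the $\Gamma$-consistency of $\SS$ and finishes the proof.

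The one point that needs care, and which I expect to be the only (minor) obstacle, is ensuring that the resolvent $[y_1,\dots,y_n,z_1,\dots,z_m]$ is a genuine sentence, i.e.\ that its list is nonempty. This can fail only in the degenerate case $n=m=0$, where $\Gamma\proves [x]$ and $\Gamma\proves [\notterm{x}]$. In that situation I would first weaken one premise by some term $w\in\SS$ via $(\structural)$, obtaining $\Gamma\proves [\notterm{x}, w]$, and then resolve against $[x]$ to get $\Gamma\proves [w]$, once more contradicting the consistency of $\SS$. (The same reasoning in fact shows that proving both $[x]$ and $[\notterm{x}]$ forces $\Gamma\proves [w]$ for every term $w$, so this branch never arises for a consistent $\SS$ with a term to weaken by.)
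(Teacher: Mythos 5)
Your proof is correct and follows essentially the same route as the paper's: normalize the two assumed inconsistencies via $(\structural)$ to $\Gamma\proves[x,y_1,\dots,y_n]$ and $\Gamma\proves[\notterm{x},z_1,\dots,z_m]$ with all $y_i,z_j\in\SS$, then apply $(\res)$ once to contradict the $\Gamma$-consistency of $\SS$. Your attention to the degenerate case $n=m=0$ (where the resolvent would be an empty, hence ill-formed, sentence) is a point of care that the paper's own proof silently skips, and your weakening fix handles it correctly whenever $\SS$ is nonempty.
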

 
\begin{proof}
Suppose not.  
 Then by (\structural), there are $y_1,\ldots, y_n\in \SS$  such that $\Gamma\proves[x,y_1,\ldots, y_n]$,
and there are $z_1,\ldots, z_m\in\SS$  such that $\Gamma\proves[\notterm{x},z_1,\ldots, z_m]$.
By (\res),  $\Gamma\proves[y_1,\ldots, y_n,z_1,\ldots, z_m]$.  This contradicts the $\Gamma$-consistency of $\SS$.
 \end{proof}
 
 \begin{lemma}  Let  $\SS$ be a maximal $\Gamma$-consistent set of terms.  Then 
 for all $x$, exactly one of $x$ or $\notterm{x}$ belongs to $\SS$.
 \label{lemma-exactly-one}
 \end{lemma}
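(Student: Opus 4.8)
The plan is to establish the two halves of ``exactly one'' separately: that \emph{at most one} of $x$ and $\notterm{x}$ lies in $\SS$, and that \emph{at least one} does. Both halves are short, so the main work is simply invoking the right tool for each.

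For the ``at most one'' direction, I would argue by contradiction. Suppose both $x\in\SS$ and $\notterm{x}\in\SS$. The rule $(\axiom)$ has no premises and yields $[x,\notterm{x}]$ directly, so $\Gamma\proves [x,\notterm{x}]$ for any theory $\Gamma$. But then $x,\notterm{x}$ is a list of terms drawn entirely from $\SS$ with $\Gamma\proves[x,\notterm{x}]$, which is exactly the condition for $\SS$ to be $\Gamma$-inconsistent. This contradicts the hypothesis that $\SS$ is $\Gamma$-consistent. Hence $x$ and $\notterm{x}$ cannot both belong to $\SS$.

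For the ``at least one'' direction, I would apply Lemma~\ref{lemma-sequent-consistency} to the $\Gamma$-consistent set $\SS$ and the term $x$: either $\SS\cup\set{x}$ or $\SS\cup\set{\notterm{x}}$ is $\Gamma$-consistent. In either case we have produced a $\Gamma$-consistent set containing $\SS$, so by the maximality of $\SS$ this set must coincide with $\SS$ itself. Therefore the corresponding term (namely $x$ or $\notterm{x}$) already belongs to $\SS$, as required.

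I do not anticipate a genuine obstacle here; the argument is a standard maximal-consistent-set bookkeeping, with Lemma~\ref{lemma-sequent-consistency} supplying the completeness half and $(\axiom)$ supplying the consistency half. The only point deserving care is the precise reading of ``maximal'': it means there is no strictly larger $\Gamma$-consistent set, and it is this reading that lets us conclude $\SS\cup\set{x}=\SS$ (equivalently $x\in\SS$) from the $\Gamma$-consistency of $\SS\cup\set{x}$.
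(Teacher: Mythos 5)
Your proof is correct and follows exactly the paper's argument: Lemma~\ref{lemma-sequent-consistency} plus maximality gives that at least one of $x$, $\notterm{x}$ lies in $\SS$, and $(\axiom)$ plus $\Gamma$-consistency rules out both. You have simply spelled out the two steps in more detail than the paper does.
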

 
 \begin{proof}
 By Lemma~\ref{lemma-sequent-consistency} and maximality, either $x$ or $\notterm{x}$ belongs to $\SS$.
 Both cannot belong to $\SS$, since this would contradict $\Gamma$-consistency, due to (\axiom).
\end{proof}

 \begin{lemma}
  Let $\SS$ be maximal $\Gamma$-consistent, and suppose that $(\allterm{r}{x})\notin \SS$.
 Then there is some maximal $\Gamma$-consistent $\TT$ such that $x\in \TT$;
  and whenever $(\allterm{r}{y})\in \SS$, we have $\notterm{y}\in \TT$.
 \label{lemma-for-sequent-truth-lemma}
 \end{lemma}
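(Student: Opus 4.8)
The plan is to construct $\TT$ by first writing down the smallest set of terms it is forced to contain, proving that this set is $\Gamma$-consistent, and then extending it maximally. Concretely, I would set
\[
\SS_0 = \set{x} \cup \set{\notterm{y} : (\allterm{r}{y})\in \SS}.
\]
Any maximal $\Gamma$-consistent set $\TT \supseteq \SS_0$ automatically satisfies both requirements of the lemma: it contains $x$, and it contains $\notterm{y}$ whenever $(\allterm{r}{y})\in \SS$. So it suffices to show that $\SS_0$ is $\Gamma$-consistent, after which the desired $\TT$ is obtained by a routine Zorn's Lemma argument (the union of a chain of $\Gamma$-consistent sets is again $\Gamma$-consistent, since $\Gamma$-inconsistency is witnessed by a finite list of terms).

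The heart of the matter is the $\Gamma$-consistency of $\SS_0$, and this is precisely where the rule (\rel) does the essential work. I would argue by contradiction: suppose $\Gamma$ proves $[w_1,\dots,w_k]$ for some terms $w_i \in \SS_0$. Using (\structural) to delete, duplicate, reorder, and insert entries, I may rewrite this inconsistency in the form
\[
\Gamma\proves [\notterm{y_1},\dots,\notterm{y_{n-1}}, x],
\]
where each $(\allterm{r}{y_i})\in \SS$; in particular (\structural) lets me insert $x$ if it did not already appear, so this shape can always be reached. Applying (\rel) with $x_i = y_i$ for $i<n$ and $x_n = x$, I obtain
\[
\Gamma\proves [\allterm{r}{y_1},\dots,\allterm{r}{y_{n-1}}, \notterm{\allterm{r}{x}}].
\]

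Finally, I would observe that every entry of this last sentence belongs to $\SS$. Indeed $(\allterm{r}{y_i})\in \SS$ by the definition of $\SS_0$, and since $(\allterm{r}{x})\notin \SS$, Lemma~\ref{lemma-exactly-one} forces $\notterm{\allterm{r}{x}}\in \SS$. Thus $\Gamma$ proves a sentence $[\dots]$ all of whose terms come from $\SS$, contradicting the $\Gamma$-consistency of $\SS$. This contradiction establishes that $\SS_0$ is $\Gamma$-consistent, and extending it to a maximal $\Gamma$-consistent $\TT$ finishes the proof. The step requiring the most care is the normalization via (\structural): I must be sure the inconsistency can be put into exactly the shape the premise of (\rel) demands (complemented entries $\notterm{y_i}$ with $(\allterm{r}{y_i})\in\SS$ together with a single uncomplemented entry $x$), and this is guaranteed precisely by the membership conditions defining $\SS_0$. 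The case $n=1$, where the list is just $[x]$, is not exceptional: (\rel) then yields $[\notterm{\allterm{r}{x}}]$, and the same contradiction with $\SS$ applies.
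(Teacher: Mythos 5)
Your proof is correct and follows essentially the same route as the paper's: the same set $\set{x}\cup\set{\notterm{y} : (\allterm{r}{y})\in\SS}$, the same normalization via (\structural) followed by (\rel), and the same appeal to Lemma~\ref{lemma-exactly-one}, with the final contradiction merely phrased in the contrapositive direction (you put $\notterm{\allterm{r}{x}}$ into $\SS$ and contradict its $\Gamma$-consistency, while the paper deduces $\notterm{\allterm{r}{x}}\notin\SS$ and contradicts $(\allterm{r}{x})\notin\SS$). The only cosmetic quibble is that (\structural) does not literally ``delete'' entries (its side condition requires every premise entry to survive), but your normalization only ever needs insertion, contraction, and reordering, so nothing is affected.
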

 
 \begin{proof}
 Let $\TT_0 = \set{x}\cup \set{\notterm{y} : (\allterm{r}{y})\in \SS}$.
 If $\TT_0$ is $\Gamma$-inconsistent, then by (\structural) there are $y_1, \ldots, y_n$ such that $(\allterm{r}{y_i})\in \SS$ for all $i$ and  $\Gamma\proves [\notterm{y_1}, \ldots, \notterm{y_n},x]$.
  By (\rel), 
 \[\Gamma\proves [\allterm{r}{y_1},\dots,\allterm{r}{y_n}, \notterm{\allterm{r}{x}}].\]
 Since $\SS$ contains each term $(\allterm{r}{y_i})$, by $\Gamma$-consistency it does not contain $(\notterm{\allterm{r}{x}})$. So by 
 Lemma~\ref{lemma-exactly-one}, $\Gamma$ contains $(\allterm{r}{x})$. This is a contradiction.
 
 So $\TT_0$ is $\Gamma$-consistent.  
 By Zorn's Lemma, $\TT_0$ has  a maximal $\Gamma$-consistent extension, say $\TT$.
\end{proof}
 
 \paragraph{The canonical model of $\Gamma$}
 Let $M$  be the set of all maximal $\Gamma$-consistent sets of terms.   We define a model $ \Model(\Gamma)$ with domain $M$ by defining:
 \begin{align*}
 \SS\in \semantics{p} &\quadiff p\in \SS\\
 \SS\semantics{r}\TT &\quadiff \mbox{for some $z\in \TT$, $(\allterm{r}{z}) \in \SS$}.
 \end{align*}

 \begin{lemma}[Truth Lemma] In $\Model(\Gamma)$, for any term $x$, $\semantics{x} = \set{\SS \in M : x \in \SS}$.
 \end{lemma}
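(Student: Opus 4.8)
The plan is to prove the equality by induction on the structure of the term $x$, reading off the three cases from the grammar of $\langfoursecond^+$: a term is either a noun, a complement $(\notterm{x})$, or $(\allterm{r}{x})$. The base case is immediate: when $x$ is a noun $p$, the equation $\semantics{p} = \set{\SS \in M : p \in \SS}$ is literally the first clause in the definition of $\Model(\Gamma)$. All of the real content will sit in the two inductive steps, and the only external inputs I will need are Lemma~\ref{lemma-exactly-one} and Lemma~\ref{lemma-for-sequent-truth-lemma}.

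For the complementation step, assume the claim for $x$ and consider $(\notterm{x})$. By the semantics of complementation, $\semantics{\notterm{x}} = M \setminus \semantics{x}$, and by the inductive hypothesis this equals $\set{\SS \in M : x \notin \SS}$. Now Lemma~\ref{lemma-exactly-one} tells us that for every maximal $\Gamma$-consistent set $\SS$, exactly one of $x$ and $(\notterm{x})$ belongs to $\SS$; in particular $x \notin \SS$ holds precisely when $(\notterm{x}) \in \SS$. Substituting, $\semantics{\notterm{x}} = \set{\SS \in M : (\notterm{x}) \in \SS}$, which is what the statement asks for. (Note that iterated complements like $(\notterm{\notterm{y}})$ are handled automatically, since they are resolved by applying this step with the subterm $(\notterm{y})$.)

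The substantive case is $(\allterm{r}{x})$, which I would split into the two containments. For the ``easy'' direction, suppose $(\allterm{r}{x}) \in \SS$; to conclude $\SS \in \semantics{\allterm{r}{x}}$ I must verify that $\SS\,\semantics{r}\,\TT$ holds for every $\TT \in \semantics{x}$. By the inductive hypothesis, $\TT \in \semantics{x}$ means $x \in \TT$, so taking the witness $z = x$ in the definition of $\semantics{r}$ (using $(\allterm{r}{x}) \in \SS$) gives $\SS\,\semantics{r}\,\TT$ at once. For the other direction I prove the contrapositive: assuming $(\allterm{r}{x}) \notin \SS$, I must exhibit some $\TT \in \semantics{x}$ for which $\SS\,\semantics{r}\,\TT$ fails, so that $\SS \notin \semantics{\allterm{r}{x}}$. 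This is exactly the role of Lemma~\ref{lemma-for-sequent-truth-lemma}, which furnishes a maximal $\Gamma$-consistent $\TT$ with $x \in \TT$ (hence $\TT \in \semantics{x}$ by induction) and with the property that $(\allterm{r}{y}) \in \SS$ forces $(\notterm{y}) \in \TT$. If $\SS\,\semantics{r}\,\TT$ held, there would be some $z \in \TT$ with $(\allterm{r}{z}) \in \SS$, whence $(\notterm{z}) \in \TT$; then $\TT$ would contain both $z$ and $(\notterm{z})$, contradicting its $\Gamma$-consistency via $(\axiom)$ (equivalently, Lemma~\ref{lemma-exactly-one}). So $\SS\,\semantics{r}\,\TT$ fails, and the induction is complete.

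I expect the sole genuine obstacle to be this last direction of the $(\allterm{r}{x})$ case, namely the need to manufacture an $r$-successor candidate $\TT$ that realizes $x$ yet is \emph{not} $r$-related to $\SS$. All of that difficulty, however, has already been quarantined in Lemma~\ref{lemma-for-sequent-truth-lemma} (whose own proof invoked $(\rel)$ and $(\res)$); once that lemma is available, the present argument is a direct unwinding of the definitions of $\semantics{r}$, maximal $\Gamma$-consistency, and the semantics of the three term formers.
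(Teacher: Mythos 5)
Your proposal is correct and follows essentially the same route as the paper: induction on the term, with the noun case by definition, the complement case via Lemma~\ref{lemma-exactly-one}, and the hard direction of the $(\allterm{r}{x})$ case discharged by the witness set $\TT$ supplied by Lemma~\ref{lemma-for-sequent-truth-lemma}. The only cosmetic difference is that you phrase that last direction as a contrapositive while the paper runs it as a proof by contradiction; the underlying argument (if $\SS\,\semantics{r}\,\TT$ held, $\TT$ would contain some $z$ together with $\notterm{z}$) is identical.
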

 
 \begin{proof}
 By induction on $x$. When $x$ is a noun, this is by the definition of the model. 
 
 Induction step for $(\allterm{r}{x})$: Suppose $(\allterm{r}{x})\in \SS$, and suppose $\TT\in \semantics{x}$. By induction $x\in \TT$, so by the definition of the model, $\SS\semantics{r}\TT$. Thus $\SS\in \semantics{\allterm{r}{x}}$. Conversely, suppose $\SS\in \semantics{\allterm{r}{x}}$, and assume for contradiction that $(\allterm{r}{x})\notin \SS$. By Lemma~\ref{lemma-for-sequent-truth-lemma}, there is some $\TT\in M$ such that $x\in \TT$, and whenever $(\allterm{r}{y})\in \SS$, we have $\notterm{y}\in \TT$, so $y\notin \TT$ by Lemma~\ref{lemma-exactly-one}. By induction, $\TT\in \semantics{x}$, but it is not the case that $\SS\semantics{r}\TT$, which is a contradiction.  
 
 Induction step for $\notterm{x}$: For any $\SS\in M$, we have $\SS\in \semantics{\notterm{x}}$ if and only if $\SS\notin \semantics{x}$. By induction, this is equivalent to $x\notin \SS$. And by Lemma~\ref{lemma-exactly-one}, $x\notin \SS$ if and only if $\notterm{x}\in \SS$. 
 \end{proof}
 
 \begin{lemma} If $\Gamma$ is consistent, then $\Model(\Gamma)\models \Gamma$. 
 \label{lemma-can-works}
 \end{lemma}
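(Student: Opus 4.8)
The plan is to verify, sentence by sentence, that every member of $\Gamma$ is true in $\Model(\Gamma)$, using the Truth Lemma to convert the semantic condition $\bigcap_i \semantics{x_i} = \emptyset$ (respectively $\neq \emptyset$) into a statement about which maximal $\Gamma$-consistent sets contain the terms $x_1,\dots,x_n$. There are two cases, according to the two sentence formers.

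First, suppose $[x_1,\dots,x_n]\in\Gamma$; I want $\bigcap_{i=1}^n \semantics{x_i} = \emptyset$ in $\Model(\Gamma)$. If not, pick $\SS\in M$ with $\SS\in\bigcap_i \semantics{x_i}$. By the Truth Lemma, $x_i\in\SS$ for every $i$. But the one-node tree consisting of the sentence $[x_1,\dots,x_n]$ itself witnesses $\Gamma\proves[x_1,\dots,x_n]$, so the list $x_1,\dots,x_n$ drawn from $\SS$ shows that $\SS$ is $\Gamma$-inconsistent, contradicting $\SS\in M$. Hence the intersection is empty and $\Model(\Gamma)\models[x_1,\dots,x_n]$. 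Note that this case uses nothing about the consistency of $\Gamma$.

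Second, suppose $\pair{x_1,\dots,x_n}\in\Gamma$; here I must produce a witness, i.e.\ some $\SS\in M$ with all $x_i\in\SS$, for then the Truth Lemma gives $\SS\in\bigcap_i \semantics{x_i}$. The key claim is that the finite set $\set{x_1,\dots,x_n}$ is itself $\Gamma$-consistent, and this is exactly where consistency of $\Gamma$ enters. Indeed, were it $\Gamma$-inconsistent, there would be a list $y_1,\dots,y_m$ of terms from $\set{x_1,\dots,x_n}$ with $\Gamma\proves[y_1,\dots,y_m]$; since each $y_j$ appears among $x_1,\dots,x_n$, one application of (\structural) yields $\Gamma\proves[x_1,\dots,x_n]$. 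Together with $\Gamma\proves\pair{x_1,\dots,x_n}$ (which holds because this sentence lies in $\Gamma$), this makes $\Gamma$ inconsistent, contrary to hypothesis. So $\set{x_1,\dots,x_n}$ is $\Gamma$-consistent, and by Zorn's Lemma it extends to a maximal $\Gamma$-consistent set $\SS\in M$ containing every $x_i$; the Truth Lemma then gives $\bigcap_i \semantics{x_i}\neq\emptyset$, as required.

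The two cases together give $\Model(\Gamma)\models\Gamma$. The argument is essentially bookkeeping once the Truth Lemma is in hand; the only point needing care, and the only place the hypothesis that $\Gamma$ is consistent is used, is the existential case, where one must pass from a derivation of $[y_1,\dots,y_m]$ on a sublist to a derivation of $[x_1,\dots,x_n]$ on the full list via the weakening/exchange direction of (\structural), and then combine it with the assumed sentence $\pair{x_1,\dots,x_n}$ to contradict the consistency of $\Gamma$.
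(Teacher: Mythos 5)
Your proof is correct and follows essentially the same route as the paper's: the universal case via the Truth Lemma and $\Gamma$-consistency of $\SS$, and the existential case by showing $\set{x_1,\dots,x_n}$ is $\Gamma$-consistent using (\structural) and the consistency of $\Gamma$, then extending to a maximal $\Gamma$-consistent set. The extra detail you supply (the one-node proof tree, the explicit check of the side condition on (\structural)) is just a more careful spelling-out of the same argument.
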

 
 \begin{proof}
 Let $\varphi\in \Gamma$. First, suppose $\varphi = [x_1,\ldots, x_n]$. Suppose for contradiction that there exists $\SS\in\bigcap_i\semantics{x_i}$.  Then by the Truth Lemma, $x_i\in \SS$ for all $i$, contradicting $\Gamma$-consistency of $\SS$. 
 
Now suppose $\varphi = \pair{x_1,\dots,x_n}$. We claim that the set $\set{x_1,\ldots, x_n}$ is $\Gamma$-consistent. 
If not, then using $(\structural)$, $\Gamma\proves [x_1,\ldots, x_n]$.
So $\Gamma$ is inconsistent, contradicting our assumption.
  
 Since $\set{x_1,\ldots, x_n}$ is $\Gamma$-consistent, we can extend it to a maximal $\Gamma$-consistent set $\SS$, and by the Truth Lemma, $\SS\in \bigcap_{i=1}^n \semantics{x_i}$. 
  \end{proof}
 
 \begin{theorem}[Completeness] For any sentence $\varphi$, if $\Gamma\models \varphi$, then $\Gamma\proves \varphi$. 
 \label{theorem-completeness-sequent}
 \end{theorem}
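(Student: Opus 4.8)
The plan is to prove the contrapositive: assuming $\Gamma\not\proves\varphi$, I will produce a model of $\Gamma$ in which $\varphi$ fails, using the canonical model machinery already developed. The first step is to note that $\Gamma$ must be consistent, since an inconsistent theory proves every sentence: given a witnessing pair $\pair{y_1,\dots,y_m}$ and $[y_1,\dots,y_m]$, the rule $(\efq)$ yields every $[x_1,\dots,x_n]$, while $(\raa)$—after adjoining and then withdrawing an assumption $[x_1,\dots,x_n]$, re-deriving the same contradiction—yields every $\pair{x_1,\dots,x_n}$. So from $\Gamma\not\proves\varphi$ we may conclude that $\Gamma$ is consistent, and Lemma~\ref{lemma-can-works} gives $\Model(\Gamma)\models\Gamma$. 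I then split on the two syntactic forms of $\varphi$.

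If $\varphi=[x_1,\dots,x_n]$, I claim that the set $\set{x_1,\dots,x_n}$ is $\Gamma$-consistent. Indeed, were it $\Gamma$-inconsistent, there would be a sublist $y_1,\dots,y_k$ drawn from $\set{x_1,\dots,x_n}$ with $\Gamma\proves[y_1,\dots,y_k]$; since each $y_i$ occurs among the $x_j$, the rule $(\structural)$ would promote this to $\Gamma\proves[x_1,\dots,x_n]$, contradicting $\Gamma\not\proves\varphi$. By Zorn's Lemma the set extends to a maximal $\Gamma$-consistent $\SS\in M$, and the Truth Lemma gives $\SS\in\bigcap_{i=1}^n\semantics{x_i}$. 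Hence $\bigcap_{i=1}^n\semantics{x_i}\neq\emptyset$, so $\Model(\Gamma)\not\models\varphi$ and therefore $\Gamma\not\models\varphi$. This case uses only $(\axiom)$, $(\res)$, $(\rel)$, and $(\structural)$, matching the intended corollary for $\langfour^+$.

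The case $\varphi=\pair{x_1,\dots,x_n}$ is where the asymmetry of the system, and the rule $(\raa)$, does the real work: the canonical model $\Model(\Gamma)$ need not falsify a $\pair{\dots}$ sentence, so I instead pass to the augmented theory $\Gamma'=\Gamma\cup\set{[x_1,\dots,x_n]}$. The key observation is that $\Gamma'$ is consistent: if $\Gamma'$ proved both $\pair{y_1,\dots,y_m}$ and $[y_1,\dots,y_m]$ for some list, then $(\raa)$—withdrawing exactly the adjoined hypothesis $[x_1,\dots,x_n]$—would give $\Gamma\proves\pair{x_1,\dots,x_n}$, contradicting $\Gamma\not\proves\varphi$. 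Applying Lemma~\ref{lemma-can-works} to $\Gamma'$ yields $\Model(\Gamma')\models\Gamma'$; in particular $\Model(\Gamma')\models\Gamma$ and $\Model(\Gamma')\models[x_1,\dots,x_n]$, so $\bigcap_{i=1}^n\semantics{x_i}=\emptyset$ and $\Model(\Gamma')\not\models\pair{x_1,\dots,x_n}$. Thus $\Gamma\not\models\varphi$.

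The main obstacle is precisely this second case. A sentence $[x_1,\dots,x_n]$ can be refuted directly inside $\Model(\Gamma)$, but a $\pair{\dots}$ sentence cannot, and the argument turns on recognizing that $(\raa)$ is the only mechanism for discharging a $[\dots]$ hypothesis—exactly the mechanism that converts ``$\Gamma$ does not prove $\pair{x_1,\dots,x_n}$'' into ``$\Gamma\cup\set{[x_1,\dots,x_n]}$ is consistent.'' Once these two consistency reductions are set up, every remaining ingredient—the maximal-$\Gamma$-consistent-set construction, the Truth Lemma, and Lemma~\ref{lemma-can-works}—is already available, and the proof closes immediately in both cases.
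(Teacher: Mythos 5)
Your proposal is correct and follows essentially the same route as the paper's proof: the $[x_1,\dots,x_n]$ case uses $(\structural)$ plus the maximal $\Gamma$-consistent set and the Truth Lemma exactly as in the paper, and the $\pair{x_1,\dots,x_n}$ case is the paper's argument stated in contrapositive form, with the same key move of applying Lemma~\ref{lemma-can-works} to $\Gamma\cup\set{[x_1,\dots,x_n]}$ and using $(\raa)$ to relate its consistency to provability of $\pair{x_1,\dots,x_n}$ from $\Gamma$. The only cosmetic difference is that you establish consistency of $\Gamma$ up front (via the vacuous-withdrawal reading of $(\raa)$, which the paper also implicitly permits) rather than dispatching the inconsistent case with $(\efq)$ inside the first case.
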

 
 \begin{proof}
Suppose $\varphi = [x_1,\dots,x_n]$. If $\Gamma$ is inconsistent, then by (\efq), $\Gamma\proves \varphi$, and we are done. So we may assume that $\Gamma$ is consistent. Assume for contradiction that $\Gamma\not\proves [x_1, \ldots, x_n]$. Consider the canonical model $\Model(\Gamma)$. By Lemma~\ref{lemma-can-works}, $\Model(\Gamma) \models\Gamma$, so $\Model(\Gamma)\models \phi$. By (\structural), the set $\set{x_1,\ldots, x_n}$ is $\Gamma$-consistent. So we can extend it to a maximal $\Gamma$-consistent set $\SS$.   By the Truth Lemma, $\SS\in \bigcap_{i=1}^n \semantics{x_i}$, so in $\Model(\Gamma)\not\models [x_1,\dots, x_n]$, which is a contradiction. 

Now suppose $\varphi = \pair{x_1,\dots,x_n}$. If $\Gamma\models \varphi$, then $\Gamma\cup \set{[x_1,\dots,x_n]}$ has no models, so by Lemma~\ref{lemma-can-works}, $\Gamma\cup \set{[x_1,\dots,x_n]}$ is inconsistent. This means that $\Gamma\cup \set{[x_1,\dots,x_n]}$ proves both $\pair{y_1,\dots,y_m}$ and $[y_1,\dots,y_m]$, so by (\raa), $\Gamma\proves \pair{x_1,\dots,x_n}$, as was to be shown.
  \end{proof}

We have just seen the completeness theorem for  $\langfoursecond^+$.  
Let $\langfour^+$ be the generalization of $\langfour$ obtained by adding sentences of the form $[x_1,\dots,x_n]$ (but not $\pair{x_1,\dots,x_n}$). Then we can restrict our logical system for $\langfoursecond^+$ to all the rules except for (\efq) and (\raa). These rules are still sound for $\langfour^+$, and we will show that they are complete as well.

\begin{corollary}\label{cor:langfourcompleteness}
Let $\proves_0$ be the proof system consisting of the rules $(\axiom)$, $(\structural)$, $(\res)$, and $(\rel)$. Then $\proves_0$ is sound and complete for $\langfour^+$.
\end{corollary}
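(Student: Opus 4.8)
The plan is to dispatch soundness immediately and to obtain completeness by rerunning the relevant half of the proof of Theorem~\ref{theorem-completeness-sequent}, observing that the omitted rules $(\efq)$ and $(\raa)$ were never needed for $[\cdots]$-conclusions. Soundness is free: the rules of $\proves_0$ are a subset of the rules for $\langfoursecond^+$, so Lemma~\ref{lemma-soundness-sequent} applies directly to any $\proves_0$-derivation.

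The crux of the completeness argument is the observation that $\proves_0$ can never derive a sentence of the form $\pair{x_1,\dots,x_n}$. Indeed, the conclusion of each of $(\axiom)$, $(\structural)$, $(\res)$, and $(\rel)$ has the form $[\cdots]$, and every leaf of a proof tree over a $\langfour^+$-theory $\Gamma$ is a sentence $[\cdots]$ drawn from $\Gamma$. So by induction on the proof tree, every node of a $\proves_0$-proof over $\Gamma$ is a sentence of the form $[\cdots]$. In particular, $\Gamma$ is automatically consistent: it can never $\proves_0$-prove both $[x_1,\dots,x_n]$ and its contradictory $\pair{x_1,\dots,x_n}$.

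Given this, I would rerun the $\varphi = [x_1,\dots,x_n]$ case of the proof of Theorem~\ref{theorem-completeness-sequent}, reading $\proves$ as $\proves_0$ throughout. Every supporting result for the canonical model, namely Lemma~\ref{lemma-sequent-consistency}, Lemma~\ref{lemma-exactly-one}, Lemma~\ref{lemma-for-sequent-truth-lemma}, the Truth Lemma, and the $[\cdots]$-case of Lemma~\ref{lemma-can-works}, invokes only $(\axiom)$, $(\structural)$, $(\res)$, and $(\rel)$, so all of these hold verbatim for $\proves_0$. Because $\Gamma$ is consistent, the inconsistency branch of the original proof (the one using $(\efq)$) simply does not arise. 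Concretely: assuming $\Gamma\models [x_1,\dots,x_n]$ and, for contradiction, $\Gamma\not\proves_0 [x_1,\dots,x_n]$, the set $\set{x_1,\dots,x_n}$ is $\Gamma$-consistent by $(\structural)$; extending it to a maximal $\Gamma$-consistent set $\SS$ and applying the Truth Lemma gives $\SS\in\bigcap_{i=1}^n\semantics{x_i}$ in $\Model(\Gamma)$, so $\Model(\Gamma)\not\models [x_1,\dots,x_n]$, contradicting $\Model(\Gamma)\models\Gamma$ together with $\Gamma\models [x_1,\dots,x_n]$.

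The one point that genuinely needs checking, and which I regard as the heart of the matter, is the claim that removing $(\efq)$ and $(\raa)$ costs nothing here. The rule $(\raa)$ is used only to conclude $\pair{\cdots}$ sentences, which are absent from $\langfour^+$, and $(\efq)$ is used in Theorem~\ref{theorem-completeness-sequent} only to handle inconsistent theories. The consistency observation of the second paragraph shows that no $\langfour^+$-theory is inconsistent for $\proves_0$, so both omitted rules are provably unnecessary for proving sentences of the form $[x_1,\dots,x_n]$, and the corollary follows.
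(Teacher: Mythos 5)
Your proof is correct, but it takes a genuinely different route from the paper's. The paper first invokes completeness of the full system (Theorem~\ref{theorem-completeness-sequent}) to obtain $\Gamma\proves\varphi$, and then argues purely proof-theoretically, by induction on the height of proof trees, that no $\proves$-proof of any sentence from a $\langfour^+$-theory can use $(\raa)$ --- since only $(\raa)$ yields conclusions of the form $\pair{y_1,\dots,y_m}$, the left subproof of any putative lowest application of $(\raa)$ cannot exist --- and consequently no such proof can use $(\efq)$ either; hence the proof already lives in $\proves_0$. You instead re-run the canonical-model construction directly for $\proves_0$, keyed to the observation that $\proves_0$ derives only sentences of the form $[x_1,\dots,x_n]$, so every $\langfour^+$-theory is automatically consistent and the $(\efq)$ branch of the original completeness proof never arises. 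Both arguments are sound. Yours is self-contained on the semantic side and does not require analyzing arbitrary $\proves$-derivations, though it rests on the (true, and worth stating explicitly, as you do) check that Lemmas~\ref{lemma-sequent-consistency}, \ref{lemma-exactly-one}, and \ref{lemma-for-sequent-truth-lemma}, the Truth Lemma, and the $[x_1,\dots,x_n]$-case of Lemma~\ref{lemma-can-works} invoke only $(\axiom)$, $(\structural)$, $(\res)$, and $(\rel)$. The paper's version is shorter given that Theorem~\ref{theorem-completeness-sequent} is already in hand, and it is precisely the ``easy proof-theoretic argument'' the authors advertise when explaining the asymmetric design of the two sentence formers; on the other hand, your version would survive unchanged even if the full system's completeness proof were organized differently.
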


\begin{proof}
We have already observed the soundness. So suppose $\Gamma$ is a theory in $\langfour^+$ and $\varphi$ is a sentence in $\langfour^+$ such that $\Gamma\models \varphi$. Moving up to the larger language $\langfoursecond^+$, we have $\Gamma\proves\varphi$ by Theorem~\ref{theorem-completeness-sequent}. Our goal is to show that $\Gamma\proves_0\phi$.

We first claim that if $\Gamma$ is any theory in $\langfour^+$ and $\psi$ is any sentence in $\langfoursecond^+$ such that $\Gamma\proves \psi$, then (\raa) is not used in the proof. The argument is by induction on height of the proof tree. Suppose for contradiction that (\raa) is used. We may assume that the root of the proof tree is an application of (\raa):
\[
\infer[\raa]{\pair{x_1,\ldots, x_n}}{\infer*{\pair{y_1,\ldots, y_m}}{\xcancel{[x_1,\ldots, x_n]}}  & \infer*{[y_1,\ldots,y_m]}{\xcancel{[x_1,\ldots, x_n]}} }
\]
The left subtree is a proof of $\pair{y_1,\dots,y_m}$ from $\Gamma\cup \set{[x_1,\dots,x_n]}$. Since $\Gamma\cup \set{[x_1,\dots,x_n]}$ is a theory in $\langfour^+$, by induction (\raa) is not used in this proof. But none of the other rules produce consequences of the form $\pair{y_1,\dots,y_m}$, so this is a contradiction. 

Now it is easy to see that if $\Gamma$ is any theory in $\langfour^+$, then no $\proves$ proof from $\Gamma$ uses (\efq), since none of our rules other than $(\raa)$ allow us to produce or introduce a premise of the form  $\pair{y_1,\dots,y_m}$.

Therefore, if $\Gamma\proves \phi$, then $\Gamma\proves_0 \phi$. 
\end{proof}

\subsection{Open problems concerning $\langfour$ and $\langfoursecond$}

We began this section with the result that the consequence relation for $\langfour$ is $\CONP$ hard.
This implies that, assuming {\sc P $\neq$ NP}, there is no boundedly complete syllogistic proof system for either $\langfour$ or $\langfoursecond$ 
Instead, we added to the syntax and formulated a proof system which went beyond the 
``purely syllogistic''; in that it used schematic rules and also (\raa).   But we did not find proof systems of any kind for the
original languages $\langfour$ and $\langfoursecond$.  We leave this as an open problem.  
(There is a result of possible relevance in~\cite{logic:mossverbs2010}: a syllogistic system for sentences
of the form $(\all{p}{x})$, where $p$ is  a (complemented) noun,
and $x$ is either  a (complemented) noun or a term $(\allterm{r}{q})$, where $q$ is a (complemented) noun.)
 For that matter, 
we also leave open the question of determining the exact
complexities of the consequence relations for $\langfour$ and $\langfoursecond$.

\section{$\langfive$ and $\langfivesecond$: Putting it all together}
\label{section-five}

The largest logics in this paper are $\langfive$ and $\langfivesecond$, as described in Figure~\ref{language-chart}.  We have much less to say about them than about their sub-languages because 
a notational variant of  $\langfivesecond$
has already been studied.
This is the language $\mathcal{R}^{*\dag}$ in~\cite{phmoss}.
Here is the syntax of this language.  We begin with a set
 $\bP$ of nouns a set $\bR$ of \emph{verb atoms}.
 A \emph{verb literal} is either a verb $r$ or its complement $\rbar$.
We define \emph{terms} and \emph{sentences}
via the syntax below:
\begin{equation*}
\begin{split}
\mbox{terms $x$, $y$, $\ldots$} &\qquad p\in \bP \mid \pbar \mid \allterm{r}{p} \mid \allterm{\rbar}{p} 
 \mid \someterm{r}{p} \mid \someterm{\rbar}{p}  \\ 
\mbox{sentences $\phi$, $\psi$, $\ldots$} &\qquad  \all{x}{y} \mid \some{x}{y} 
\end{split}
\end{equation*}
Note that we do \emph{not} have recursion for terms.
In the semantics, we interpret complemented verbs using relational complement:
\[
\semantics{\rbar} = (M\times M)\setminus\semantics{r}.
\]
The rest of the semantics is clear.

\begin{proposition} \label{fivesecond}
There is a translation map $\phi\mapsto\phi^*$ of  sentences in $\mathcal{R}^{*\dag}$ to 
sentences in  $\langfivesecond$ with the following properties:
\begin{enumerate}
\item $\Gamma\models \phi$ iff $\Gamma^*\models\phi^*$, where $\Gamma^* = \set{\psi^* : \psi\in \Gamma}$.
\item  $\phi\mapsto\phi^*$ is computable in $\PTIME$.
\end{enumerate}
\end{proposition}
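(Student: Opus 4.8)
The plan is to build the map $\phi \mapsto \phi^*$ by translating terms first and then extending it compositionally to sentences. The crucial preliminary observation is that the two languages are interpreted over the \emph{same} kind of structure: a domain $M$ together with $\semantics{p} \subseteq M$ for each noun $p$ and $\semantics{r} \subseteq M \times M$ for each verb atom $r$. In $\mathcal{R}^{*\dag}$ the relation $\semantics{\rbar} = (M\times M)\setminus \semantics{r}$ is \emph{forced} by the interpretation of $r$, whereas $\langfivesecond$ simply carries no complemented verbs in its signature. Hence there is a canonical identity bijection between models of $\mathcal{R}^{*\dag}$ and models of $\langfivesecond$, and I would prove the semantic equivalence relative to one fixed structure $\Model$ regarded in both senses at once.

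Since $\mathcal{R}^{*\dag}$ has no term recursion, there are only six shapes of term, so I define $t^*$ by cases:
\[
p^* = p, \quad (\pbar)^* = \notterm{p}, \quad (\allterm{r}{p})^* = \allterm{r}{p}, \quad (\someterm{r}{p})^* = \someterm{r}{p},
\]
together with the two clauses that carry the real content,
\[
(\allterm{\rbar}{p})^* = \notterm{(\someterm{r}{p})}, \qquad (\someterm{\rbar}{p})^* = \notterm{(\allterm{r}{p})}.
\]
On sentences I set $(\all{x}{y})^* = \all{x^*}{y^*}$ and $(\some{x}{y})^* = \some{x^*}{y^*}$.

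The heart of the argument is a term lemma: $\semantics{t} = \semantics{t^*}$ for every $\mathcal{R}^{*\dag}$ term $t$, where the left side is computed in the $\mathcal{R}^{*\dag}$ sense and the right in the $\langfivesecond$ sense, over the same $\Model$. This is a finite case check, and the only nontrivial cases are the two complemented-verb terms, which are exactly the De Morgan duality between the two term-formers under complementation. For instance, $\semantics{\allterm{\rbar}{p}} = \set{m : \text{for all } n \in \semantics{p},\ \lnot(m\semantics{r}n)}$, which is the complement of $\set{m : \text{there is } n\in\semantics{p} \text{ with } m\semantics{r}n} = \semantics{\someterm{r}{p}}$, i.e.\ it equals $\semantics{\notterm{(\someterm{r}{p})}}$; the case of $\someterm{\rbar}{p}$ is symmetric, interchanging the universal and existential formers. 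Extending to sentences is then immediate from the definitions of $\models$ for $(\all{x}{y})$ and $(\some{x}{y})$, yielding $\Model \models \phi$ iff $\Model \models \phi^*$ for every $\mathcal{R}^{*\dag}$ sentence $\phi$.

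Property (1) then follows from the model bijection: if $\Gamma \models \phi$ fails, some $\Model \models \Gamma$ has $\Model \not\models \phi$, and by the term lemma this same $\Model$ witnesses $\Gamma^* \not\models \phi^*$, and conversely. Property (2) is immediate, since $t \mapsto t^*$ replaces each term by one of at most a fixed bounded size increase and the sentence translation is a single compositional pass, so $\phi \mapsto \phi^*$ runs in linear (hence polynomial) time. I do not expect a serious obstacle: the proposition is essentially bookkeeping, and the only points demanding care are confirming that the two classes of models genuinely coincide, so that the biconditional in (1) holds in both directions, and checking that the duality clauses are chosen with the quantifier flipped in the correct direction.
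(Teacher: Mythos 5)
Your proposal is correct and follows essentially the same route as the paper: the paper also translates only the terms, using exactly the two De Morgan clauses $(\allterm{\rbar}{x})^* = \notterm{\someterm{r}{x}}$ and $(\someterm{\rbar}{x})^* = \notterm{\allterm{r}{x}}$, and justifies property (1) by noting that the translated terms have the same interpretation in every model. Your write-up merely makes explicit the bookkeeping (the identity on the remaining term shapes, the compositional extension to sentences, and the observation that the two languages share the same class of models) that the paper leaves implicit.
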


\begin{proof}
It is sufficient to translate terms $x$ of 
$\mathcal{R}^{*\dag}$.
For this, we use 
\[
\begin{array}{lcl}
(\allterm{\rbar}{x})^* & =&  \notterm{\someterm{r}{x}}\\
(\someterm{\rbar}{x})^* & = & \notterm{\allterm{r}{x}}\\
\end{array}
\]
The point is that the left sides of the equations above have the same interpretations as the right side in every model.
\end{proof}

The translation in the other direction is more complicated due to the complex terms in the languages of this paper.
We use the standard technique of \emph{flattening}.

\begin{proposition} \label{fivesecondsecond} There is a translation map 
$(\Gamma,\phi) \mapsto (\Gamma^*,\phi^*)$ from assertions in  $\langfivesecond$ to 
assertions in  $\mathcal{R}^{*\dag}$ 
 with the following properties:
\begin{enumerate}
\item $\Gamma\models \phi$ iff $\Gamma^*\models\phi^*$.
\item If $\Gamma$ is finite, so is $\Gamma^*$.  In this case, 
$(\Gamma,\phi) \mapsto (\Gamma^*,\phi^*)$ is computable in $\PTIME$.
\end{enumerate}
\end{proposition}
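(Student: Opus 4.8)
The plan is to use \emph{flattening}: introduce a fresh noun for every complex subterm appearing in $\Gamma\cup\set{\phi}$ and record its intended meaning by definitional sentences, so that the recursion in $\langfivesecond$ terms is replaced by a (larger) theory phrased entirely in the flat term syntax of $\mathcal{R}^{*\dag}$. Concretely, for each subterm $t$ of a sentence in $\Gamma\cup\set{\phi}$ I would fix a noun $q_t$, taking $q_t=t$ when $t$ is already a noun and choosing $q_t$ fresh otherwise. The translation of a sentence $\phi=(\all{x}{y})$ is then $\phi^*=(\all{q_x}{q_y})$, and similarly $(\some{x}{y})^*=(\some{q_x}{q_y})$; since $q_x,q_y$ are nouns, these are legal $\mathcal{R}^{*\dag}$ sentences.

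The theory $\Gamma^*$ would consist of two kinds of sentences: first, the translations $\psi^*$ of the sentences $\psi\in\Gamma$; and second, definitional sentences pinning down each fresh noun. For $t=(\allterm{r}{s})$ I add $(\all{q_t}{(\allterm{r}{q_s})})$ and $(\all{(\allterm{r}{q_s})}{q_t})$; for $t=(\someterm{r}{s})$ the analogous pair with $(\someterm{r}{q_s})$; and for $t=(\notterm{s})$ the pair $(\all{q_t}{\overline{q_s}})$ and $(\all{\overline{q_s}}{q_t})$. Each right-hand side is a legal $\mathcal{R}^{*\dag}$ term precisely because its argument $q_s$ is a noun (and $\overline{q_s}$ is a complemented noun); in fact this direction of the translation never needs complemented verbs at all.

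The key lemma is that in any model $\Nodel\models\Gamma^*$ one has $\semantics{q_t}=\semantics{t}$ for every subterm $t$, proved by induction on $t$. The base case (a noun) is immediate. In each inductive case the two definitional sentences force $\semantics{q_t}$ to equal the interpretation of the corresponding $\mathcal{R}^{*\dag}$ term built from $q_s$ (namely $(\allterm{r}{q_s})$, $(\someterm{r}{q_s})$, or $\overline{q_s}$), which by the induction hypothesis equals $\semantics{t}$, since the interpretation of $(\allterm{r}{x})$, $(\someterm{r}{x})$, and $(\notterm{x})$ depends on $x$ only through $\semantics{x}$. Using this lemma, property (1) follows in both directions. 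For $\Rightarrow$: assuming $\Gamma\models\phi$ and $\Nodel\models\Gamma^*$, the lemma shows that $\Nodel$, read as a $\langfivesecond$-model on $\bP$ and $\bR$, satisfies every $\psi\in\Gamma$ (its translation holds and $\semantics{q_\cdot}=\semantics{\cdot}$), hence satisfies $\phi$, hence $\phi^*$. For $\Leftarrow$: given a $\langfivesecond$-model $\Model\models\Gamma$, extend it to $\Model^+$ by setting the interpretation of each fresh noun $q_t$ to $\semantics{t}$ in $\Model$; then all definitional sentences hold by construction and each $\psi^*$ holds, so $\Model^+\models\Gamma^*$, whence $\Model^+\models\phi^*$ and therefore $\Model\models\phi$.

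For property (2), when $\Gamma$ is finite the set of subterms of $\Gamma\cup\set{\phi}$ is finite and linear in the size of the input, so we introduce only polynomially many fresh nouns and definitional sentences, and the whole construction runs in $\PTIME$. I expect the main obstacle to be organizational rather than mathematical: carefully setting up the flattening so that the definitional sentences genuinely force $\semantics{q_t}=\semantics{t}$ in \emph{every} model of $\Gamma^*$ (the induction above), and confirming that the fresh nouns cause no interference — which is exactly what the restrict/extend model arguments guarantee, since neither $\Gamma$ nor $\phi$ mentions them.
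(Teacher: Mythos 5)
Your proposal is correct and follows essentially the same flattening construction as the paper's proof: a fresh noun for each subterm, definitional sentences forcing $\semantics{q_t}=\semantics{t}$ in every model of $\Gamma^*$ (proved by induction on $t$), and the restrict/extend model arguments for the two directions of property (1). The only (harmless) difference is that you handle $(\notterm{s})$ uniformly via the complemented noun $\overline{q_s}$, whereas the paper cases on the shape of the complemented term (complemented noun, $\overline{\allterm{r}{t}}$, $\overline{\someterm{r}{t}}$, double negation) and uses complemented verbs; both are legal in $\mathcal{R}^{*\dag}$ and both work.
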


\begin{proof}
Fix $\Gamma$ and $\phi$ in   $\langfivesecond$.
Let $\Terms$ be the set of all terms in $\Gamma\cup\set{\phi}$, including subterms. For each $t\in\Terms$, let $x_t$ be a new noun.
Let $\Delta_1$ be the set of all sentences of $\mathcal{R}^{*\dag}$ below, for $x\in\Terms$:
\[
\begin{array}{l@{\qquad}l}
\all{x_p}{p} & \all{p}{x_p} \\
\all{x_{(\allterm{r}{t})}}{(\allterm{r}{x_t})} &
\all{(\allterm{r}{x_t})}{x_{(\allterm{r}{t})}} 
\\
 \all{x_{(\someterm{r}{t})}}{(\someterm{r}{x_t})} &
 \all{(\someterm{r}{x_t})}{x_{(\someterm{r}{t})}}
 \\
  \all{x_{\pbar}}{\pbar} & \all{\pbar}{x_{\pbar}} \\
 \all{x_{\overline{\allterm{r}{t}}}}{(\someterm{\rbar}{x_t})} &
 \all{(\someterm{\rbar}{x_t})} {x_{\overline{\allterm{r}{t}}}}
\\
 \all{x_{\overline{\someterm{r}{t}}}}{(\allterm{\rbar}{x_t})} &
 \all{(\allterm{\rbar}{x_t})}{x_{\overline{\someterm{r}{t}}}}\\
\all{x_{\notterm{\notterm{t}}}}{x_t} & \all{x_t}{x_{\notterm{\notterm{t}}}}
\end{array}
\]
An easy induction shows that for all $t\in\Terms$, and all models $\Model\models\Delta_1$, 
$\semantics{x_t} = \semantics{t}$.
We translate the sentences of  $\langfivesecond$ to those of $\mathcal{R}^{*\dag}$
(with the new nouns) in the obvious way:  $(\all{t}{u})^*= \all{x_t}{x_u}$, and 
$(\some{t}{u})^*= \some{x_t}{x_u}$.
Let $\Delta_2 = \set{\psi^* : \psi\in \Gamma}$.  Finally, we take $\Gamma^*$ to be $\Delta_1 \cup\Delta_2$,
and $\phi^*$ to be the translation that we just saw.   

We check point (1): $\Gamma\models \phi$ iff $\Gamma^*\models\phi^*$.
Assume that $\Gamma\models\phi$, and let $\Model\models\Gamma^*$.
Due to $\Delta_1$, we have our key fact:
 for all relevant terms $t$, $\semantics{t} = \semantics{x_t}$.
Using this and the fact that $\Model\models\Delta_2$, it follows that $\Model\models\Gamma$.
And so $\Model\models\phi$.   But then using our key fact again,  $\Model\models\phi^*$.
The converse is similar.  
\end{proof}

As shown in~\cite{phmoss}, the consequence relation for
 $\mathcal{R}^{*\dag}$  
 is $\EXPTIME$ complete.  Moreover, there are 
no proof systems which are finite, sound, and complete for the logic, even allowing \emph{reductio ad absurdum}.
(Nevertheless, there are logical systems for $\mathcal{R}^{*\dag}$.  For example,
 Fitch-style system may be found in~\cite{moss:yg,Moss15}.  That system uses individual 
variables, as in first-order logic, but in a controlled way.) 

It follows from the translations in Propositions~\ref{fivesecond} and~\ref{fivesecondsecond} that
the consequence relation for $\langfivesecond$ is  $\EXPTIME$ complete.
Moreover, there can be no sound and complete syllogistic proof system for  $\langfivesecond$,
even allowing all of the  (\casesrule) rules in this paper. Indeed, any rule allowing proof by cases would correspond to a rule in the language of $\mathcal{R}^{*\dag}$  which is derivable from \emph{reductio ad absurdum}. 

We would like to point out that the $\EXPTIME$ hardness result for $\langfivesecond$ extends to the  weaker logic  $\langfive$.
To see this,  we must recall the outline of the argument in~\cite{phmoss}.  The starting point is Spaan's theorem~\cite{spaan}
 that the satisfiability problem for $\lang_U$, modal logic with the universal modality, is 
$\EXPTIME$ hard.   One takes a sentence $\phi$ in $\lang_U$ and translates it to a finite set $S_{\phi}$ of sentences of 
$\mathcal{R}^{*\dag}$ with the property that $\phi$ and $S_{\phi}$ are equisatisfiable.   By our translation, $S_{\phi}$ may be taken to
be a set of sentences in $\langfivesecond$.   By examining the details, $S_{\phi}$ is a  set $S^*_{\phi}$ of sentences in $\langfive$,
together with one additional sentence of the form $(\some{x}{x})$.   The upshot
is that $\phi$ is unsatisfiable iff  $S^*_{\phi}\models \all{x}{\xbar}$.  Note that  $(\all{x}{\xbar})$ is a sentence in $\langfive$.
 In this way, the consequence relation for $\langfive$ is at least as
hard as the (un)satisfiability problem for $\lang_U$.

This paper also explored extensions of syllogistic logic using schemes like (\chains).   It is possible that there is a schematic extension of  $\langfivesecond$, and it is also possible that extensions to the syntax will help.  We have not explored this. 
The logical system for  $\mathcal{R}^{*\dag}$ which uses individual variables adapts to $\langfivesecond$ in a straightforward way, and we expect that
the completeness and finite model properties which were shown in~\cite{moss:yg} hold in the adapted system.

\bibliographystyle{plain}
\bibliography{natlogic}

\begin{thebibliography}{10}

\bibitem{McAllester93}
David~A. McAllester.
\newblock Automatic recognition of tractability in inference relations.
\newblock {\em J. {ACM}}, 40(2):284--303, 1993.

\bibitem{logic:mcA+G92}
David~A. McAllester and Robert Givan.
\newblock Natural language syntax and first-order inference.
\newblock {\em Artificial Intelligence}, 56:1--20, 1992.

\bibitem{logic:moss08}
Lawrence~S. Moss.
\newblock Completeness theorems for syllogistic fragments.
\newblock In F.~Hamm and S.~Kepser, editors, {\em Logics for Linguistic
  Structures}, pages 143--173. Mouton de Gruyter, 2008.

\bibitem{moss:yg}
Lawrence~S. Moss.
\newblock Logics for two fragments beyond the syllogistic boundary.
\newblock In {\em Fields of Logic and Computation: Essays Dedicated to Yuri
  Gurevich on the Occasion of His 70th Birthday}, volume 6300 of {\em LNCS},
  pages 538--563. Springer-Verlag, 2010.

\bibitem{logic:mossverbs2010}
Lawrence~S. Moss.
\newblock Syllogistic logics with verbs.
\newblock {\em Journal of Logic and Computation}, 20(4):947--967, 2010.
\newblock Journal of Logic and Computation.

\bibitem{Moss15}
Lawrence~S. Moss.
\newblock Natural {L}ogic.
\newblock In {\em Handbook of Contemporary Semantic Theory, Second Edition},
  chapter~18. John Wiley \& Sons, 2015.

\bibitem{Moss:LFL}
Lawrence~S. Moss.
\newblock Logic from language.
\newblock ms., Indiana University, to appear.

\bibitem{MossKruckman16}
Lawrence~S. Moss and Alex Kruckman.
\newblock All and only.
\newblock In {\em Partiality, Underspecification, and Natural Language
  Processing}. Cambridge Scholars Publishers, 2017.

\bibitem{prattHartmann2014}
Ian Pratt-Hartmann.
\newblock The relational syllogistic revisited.
\newblock {\em Linguistic Issues in Language Technology}, pages 195--227, 2014.

\bibitem{phmoss}
Ian Pratt-Hartmann and Lawrence~S. Moss.
\newblock Logics for the relational syllogistic.
\newblock {\em Review of Symbolic Logic}, 2(4):647--683, 2009.

\bibitem{Schaefer}
Thomas~J. Schaefer.
\newblock The complexity of satisfiability problems.
\newblock In {\em Proceedings of the Tenth Annual ACM Symposium on Theory of
  Computing}, pages 216--226, San Diego, California, 1978.

\bibitem{spaan}
Edith Spaan.
\newblock {\em Complexity of Modal Logics}.
\newblock PhD thesis, ILLC, University of Amsterdam, 1993.

\end{thebibliography}

\end{document}